\documentclass{article}

\usepackage[numbers,sort&compress]{natbib}
\usepackage[dvipsnames]{xcolor}

\usepackage{mathtools}
\usepackage{authblk}
\usepackage[title]{appendix}
\usepackage{geometry}
\geometry{a4paper,scale=0.8}
\usepackage{subfiles}
\usepackage{caption}
\usepackage{subcaption}
\usepackage{setspace}
\usepackage{amsthm}
\usepackage{amsmath}
\usepackage{amssymb}
\usepackage{ulem}

\usepackage{mathrsfs}

\mathtoolsset{showonlyrefs}

\numberwithin{equation}{section}

\usepackage{float}
\usepackage{fancyhdr}
\usepackage{graphicx}
\usepackage[colorlinks,            linkcolor=black,            anchorcolor=black,            citecolor=black            ]{hyperref}
\pagestyle{headings}
\usepackage{listings}

\newtheorem{corollary}{Corollary}[section]
\newtheorem{conjecture}{Conjecture}[section]
\newtheorem{con-proposition}{``Proposition''}
\newtheorem{proposition}{Proposition}[section]
\newtheorem{theorem}{Theorem}[section]
\newtheorem{theorem_sec}{Theorem}[section]

\newtheorem{lemma}{Lemma}[section]

\newtheorem{assumption}{Assumption}[section]
\newtheorem*{theorem*}{Theorem}

\DeclareMathOperator\init{init}
\DeclareMathOperator\myin{in}

\newcommand{\p}{\partial}
\newcommand{\beq}{\begin{equation}}
\newcommand{\eeq}{\end{equation}}
\newcommand{\pvf}{(\frac{2\pi}{V_F})}

\newcommand{\bconj}{\begin{conjecture}}
	\newcommand{\econj}{\end{conjecture}}

\newcommand{\bueq}{\begin{equation*}}
\newcommand{\eueq}{\end{equation*}}

\newcommand{\bthm}{\begin{theorem}}
	\newcommand{\ethm}{\end{theorem}}

\newcommand{\eps}{\epsilon}
\newcommand{\veps}{\varepsilon}

\newcommand{\myBtwo}{\Theta}

\newcommand{\pe}{p^{\varepsilon}}

\providecommand{\keywords}[1]
{
	\small	
	\textbf{{Keywords:}} #1
}
\providecommand{\msc}[1]
{
	\small	
	\textbf{{Mathematics Subject Classification:}} #1
}

\title{A simplified voltage-conductance kinetic model for interacting neurons and its asymptotic limit}

\author{Jos\'e A. Carrillo\thanks{Mathematical Institute, University of Oxford, Oxford OX2 6GG, UK (carrillo@maths.ox.ac.uk)}, \quad Xu'an Dou\thanks{Beijing International Center for Mathematical Research, Peking University, Beijing, 100871, China (dxa@pku.edu.cn)}, \quad and \quad  Zhennan Zhou\thanks{Beijing International Center for Mathematical Research, Peking University, Beijing, 100871, China (zhennan@bicmr.pku.edu.cn).}}

\begin{document}
\maketitle
\begin{abstract}
The voltage-conductance kinetic model for the collective behavior of neurons has been studied by scientists and mathematicians for two decades, but the rigorous analysis of its solution structure has been only partially obtained in spite of plenty of numerical evidence in various scenarios. In this work, we consider a simplified voltage-conductance model in which the velocity field in the voltage variable is in a separable form. The long time behavior of the simplified model is fully investigated leading to the following dichotomy: either the density function converges to the global equilibrium, or the firing rate diverges as time goes to infinity. Besides, the fast conductance asymptotic limit is justified and analyzed, where the solution to the limit model either blows up in finite time, or globally exists leading to time periodic solutions. An important implication of these results is that the non-separable velocity field, or physically the leaky mechanism, is a key element for the emergence of periodic solutions in the original model based on the available numerical evidence.

\end{abstract}

\keywords{integrate-and-fire neurons, voltage-conductance model, kinetic Fokker-Planck equation, long time behavior, asymptotic limit, periodic solution}

\msc{35B10; 35B40; 35Q84; 92B20}
	\section{Introduction}

	Modeling the collective behavior of biological neurons via a mean-field description of the population density has been a successful approach, which leads to nonlinear partial differential equations or stochastic differential equations with new structures  (e.g. \cite{abbott1993asynchronous,brunel1999fast,fusi1999collective,cai2004effective} and  \cite[Chapter 13]{gerstner2014neuronal}). While such mean-field equations have been shown to be useful in neuroscience, in their underlying mechanism  there is still much to be understood. The novel structure of these equations brings unfamiliar challenges as well as intriguing phenomena, which are of both mathematical and scientific interest, and have attracted many mathematicians for diversified studies.

	For example, blow-up of the firing rate which relates to the multi-firing event, has been studied from a PDE point of view \cite{caceres2011analysis,carrillo2013CPDEclassical,Antonio_Carrillo_2015,roux2021towards} and from a SDE point of view \cite{delarue2015particle,delarue2015global,caceres2020understanding,hambly2019mckean}. Another fascinating phenomenon are  periodic solutions, which reflect forced or self-sustained oscillations in neuron networks. Such oscillations widely appear and play crucial roles in many biological functions such as rhythmogenesis \cite{gray1994synchronous,bianchi1995central,blankenship2010mechanisms}. In the population density description, periodic solutions have been studied in  time-elapsed  models \cite{pakdaman2013relaxation},   mean-field SDE models \cite{cormier2021hopf},   time-delayed integrate and fire models  \cite{ikeda2021theoretical}, etc. However, a general analysis framework is far from complete for such models, since the unique equation structure as well as the particular form of nonlinearity needs special investigation.  Even on the convergence to a steady state, which appears to be a simpler issue,  often only the weak interaction case can be treated \cite{roux2021towards,cormier2020long,Antonio_Carrillo_2015}, with a smallness assumption on the nonlinearity.
	
	In this work, we focus on a voltage-conductance kinetic neuron model proposed in \cite{cai2004effective,cai2006kinetic}. In this model, a neuron is characterized by two variables, its voltage $v$ and conductance $g$. The ensemble of neurons are described by $p(t,v,g)$, a probability density function at time $t$ of finding a neuron with voltage $v$ and conductance $g$. $p(t,v,g)$ satisfies a  nonlinear PDE, which is given by
	\begin{equation}\label{eq:original}
	    \p_tp+\p_v(J_v(v,g)p)=\p_g((g-g_{\myin}(t))p)+a(t)\p_{gg}p,\quad t>0,v\in(0,V_F),g>0.
	\end{equation} Here $J_v(v,g)$ denotes the velocity field in $v$ direction, which depends on both voltage $v$ and conductance $g$ as
	\begin{equation}\label{original-velocity-v}
	    J_v(v,g)=-g_Lv+g(V_E-v),
	\end{equation}where 
	\begin{equation}
	    g_L>0,\quad V_E>V_F>0.
	\end{equation} 
	The velocity field in $v$ \eqref{original-velocity-v} consists of two terms. The first term $-g_Lv$ models the leaky effect which derives a neuron to the resting potential $V_R$, set to be $0$ here. And $g_L>0$ is called the leaky conductance. The second term $g(V_E-v)$, derives the voltage to the firing potential $V_F$, and the strength of this velocity field is given by the  conductance variable $g$ and $V_E$ is referred to as the excitatory reversal potential. 
	
	A unique mechanism of a typical neuron is that it spikes when its voltage arrives at the threshold $V_F$, which has two consequences. First, after the spike, the voltage of the spiking neuron is reset to a lower potential, which is also set to be $0$, i.e., equal to the resting potential $V_R$. Note that a neuron can spike only if its conductance $g$ satisfy $J_v(V_F,g)>0$ or equivalently $g>\frac{V_F}{V_E-V_F}=:g_F$. In this case, since the spiking neurons are instantaneously reset at $v=0$ the following boundary condition for $v$ is imposed, matching the flux at $v=0$ and $v=V_F$,
	\begin{equation}\label{original-bc-v-01}
	    J_v(0,g)p(t,0,g)=J_v(V_F,g)p(t,V_F,g),\quad g>g_F,t>0.
	\end{equation}
	While if $0<g\leq g_F$, then $J_v(0,g)>0$ and $J_v(V_F,g)\leq 0$ no neuron can spike therefore no neuron is reset at $v=0$, which corresponds to the following Dirichlet boundary condition
	\begin{equation}\label{original-bc-v-02}
	     p(t,0,g)=p(t,V_F,g)=0,\quad 0<g\leq g_F,t>0.
	\end{equation} Thus we observe that the flux equality $J_v(0,g)p(t,0,g)=J_v(V_F,g)p(t,V_F,g)$ holds for all $g>0$.
	
	The second consequence is that a spiking event of one neuron influences other neurons, which make all neurons coupled in an ensemble. At this macroscopic density description, such influence is measured by the firing rate $N(t)$, the number of spikes per unit time, given by the following boundary flux at $v=V_F$ 
    \begin{equation}\label{original-firingrate}
        N(t)=\int_0^{+\infty}J_v(V_F,g)\,p(t,V_F,g)\,dg=\int_0^{+\infty}\bigl(-g_LV_F+g(V_E-V_F)\bigr)\,p(t,V_F,g)\,dg.
    \end{equation}
    
    The firing rate $N(t)$ is the source of the nonlinearity in this equation, because the coefficients $g_{\myin}(t)$ and $a(t)$ depend on $N(t)$ as
    \begin{equation}\label{original-gin-a}
        g_{\myin}(t)=g_0+g_1N(t),\quad a(t)=a_0+a_1N(t).
    \end{equation}
    Here, we  assume the parameters satisfy
    \begin{equation}
        g_0,g_1>0,\quad a_0,\,a_1>0,
    \end{equation} and  the interested readers may refer to \cite{cai2006kinetic,perthame2013voltage} for physical expressions and interpretations on these parameters. Given $g_{\myin}(t)$ and $a(t)$, in $g$ direction the equation looks like a Fokker-Planck equation for the Ornstein-Uhlenbeck process with the following no-flux boundary condition at $g=0$
\begin{equation}\label{original-bc-g}
    (g-g_{\myin}(t))p+a(t)\p_gp=0,\quad g=0,\,v\in(0,V_F),\,t>0.
\end{equation}  This system is complemented with an initial data which is a probability density as follows,
\begin{equation}
    p(0,v,g)=p_{\init}(v,g),\quad p_{\init}(v,g)\geq 0,\quad \int_{0}^{V_F}\int_0^{+\infty}p_{\init}(v,g)\,dvdg=1.
\end{equation}

This voltage-conductance model \eqref{eq:original} resembles a classical kinetic Fokker-Planck equation. In particular, the voltage $v$ is reminiscent of the position variable, and the conductance $g$ is reminiscent of the velocity variable. For one thing, the ``velocity field'' in $v$ \eqref{original-velocity-v} is influenced by $g$. For another, the spike of other neurons first influences $g$ and then affects $v$ indirectly through $J_v(v,g)$, which is like that the force first influences the velocity through Newton's second law, thereby influences the position. Besides, equation \eqref{eq:original} also has the so-called ``hypoelliptic'' structure shared by kinetic equations, as is pointed out in \cite{perthame2013voltage}. In fact, the diffusion is in $g$ direction only, and to gain dissipation as well as regularity in $v$, one needs to exploit the interplay between the transport in $v$ and the diffusion in $g$, which resembles the hypocoercivity for the kinetic Fokker-Planck equations \cite{villani2009hypocoercivity}. However, despite these similarities, the unique structure  of equation \eqref{eq:original} results in distinct solution structures from other kinetic models and correspondingly difficulties in analysis.
    
Remarkably, periodic solutions of \eqref{eq:original} have been numerically observed in \cite{caceres2011numerical}. However, the understanding on such periodic solutions has been very limited. The first theoretical analysis on \eqref{eq:original} is the pioneering work \cite{perthame2013voltage}, in which the authors analyze the steady states as well as deriving several global bounds for the dynamical problem. On the long time behavior, they derive bounds for the firing rate $N(t)$, which excludes the finite time blow-up and study the convergence to the steady state in the linear case, which is recently improved to a stronger sense \cite{dou:hal-03586715}. However, how a periodic solution arises in \eqref{eq:original} is still not understood. Our knowledge on long time behavior of \eqref{eq:original} is also very limited. Even the convergence to equilibrium when the nonlinearity is weak has not been proved yet. Besides its similarities to the kinetic Fokker-Planck equation, \eqref{eq:original} has its unique difficulties.

The first difficulty is that the firing rate $N(t)$ depends only on the integral in $g$ of the flux at  $v=V_F$ as in \eqref{original-firingrate}. This difficulty is essential, since it originates from the model assumption that neurons fire at a deterministic threshold $V_F$. This singular dependence gives mathematical challenges which motivates modified models. Such a firing mechanism is relaxed by a random discharge rule in \cite{perthame2018derivation,kim2021fast} with the aim to derive and study the macroscopic models. The effect of this relaxation has been studied in \cite{mna:8775} for a different but related model. 

The second difficulty stems from the velocity field in $v$, i.e., $J_v(v,g)=-g_Lv+g(V_E-v)$ as in \eqref{original-velocity-v}. The velocity field can not be written in a separable form $J_v(v,g)=f(v)h(g)$, in contrast to the velocity field for the position variable in classical kinetic models. This non-separable velocity field prohibits the use of many analysis tools. Moreover, it leads to the complicated boundary condition \eqref{original-bc-v-01} and \eqref{original-bc-v-02}, which makes it difficult to analyze even in the steady state case \cite{dou:hal-03586715}. In fact, it is not clear prior to this work whether the non-separable velocity field plays an essential role in the dynamics. 
    
The third difficulty is that $g$ lies in $\mathbb{R}^+$ instead of $\mathbb{R}$, which gives a time-dependent boundary condition \eqref{original-bc-g}. This difficulty is more like a technical one, since typically the density near zero is negligible as in numerical simulations \cite{caceres2011numerical}.

Our motivation is to understand self-organized oscillations in \eqref{eq:original}. Therefore, we make two simplifications on \eqref{eq:original} to isolate the effect of each of these difficulties.  First, to understand the consequences of a non-separable velocity field from a complementary side, we consider a simpler velocity field in $v$ neglecting the leaky mechanism, $J_v=g(V_E-v)$, which is further reduced to  $J_v=g$ for the sake of simplicity (see Section 2.5). Second, we extend the domain of $g$ from $\mathbb{R}^+$ to $\mathbb{R}$. Thus, the simplifications are designed to tackle the second and the third difficulties, while the issues resulting from the firing rate remain unaltered. 
   
    For the simplified voltage-conductance model to be presented in Section 2, we are able to show a clear characterization on its long time behavior. When $0<g_1/V_F<1$, the density function converges to the unique global equilibrium, while when $g_1/V_F\geq 1$, the firing rate $N(t)$ diverges to infinity as time $t$ goes to infinity. In fact, as $g_1/V_F$ approaches $1^{-}$, the steady state moves towards $g=+\infty$ and thus loses tightness. In other words, the magnitude of the parameter $g_1$ with respect to that of $V_F$ determines whether the positive feedback associated with the firing rate is weak or strong, and the precise description of these results are given in Theorem \ref{thm:full-convergence}. Our proof is based on a series of model reductions which can be rigorously justified. The full simplified voltage-conductance model is shown to asymptotically approach its $v$-homogeneous reduced model, and the dynamics of the $v$-homogeneous problem is approximately dominated by its Gaussian solution. Surprisingly, the mean and variance of the Gaussian solution satisfy a closed ODE system, and the analysis for such special solutions lays the cornerstone for analyzing the full problem.   
    
    Moreover, we introduce a  parameter $\veps>0$, which is the timescale ratio between the conductance $g$ and the voltage $v$. By considering the fast conductance limit $\veps\rightarrow0^+$, we derive a limit model governing the $v$ marginal density, whose long time behavior can also be characterized clearly in Theorem \ref{thm:longtime-fcl}. Depending on $g_1$ and the $L^{\infty}$ norm of initial data, the solution either blows up in finite time or globally exists in the form of periodic solutions. 

    To summarize, in this work, we have fully clarified the long time behavior of a simplified voltage-conductance  model and its fast conductance limit. We only find  periodic solutions in the fast conductance limit showing that the other neglected difficulties might be relevant for the appearance of periodic solutions in the full model \eqref{eq:original}. 
    
    Let us elaborate more on this last point. In the limit model the profile of the periodic solution is totally determined by the initial data -- there is no limit-cycle. Moreover, in our simplified model, there is no periodic solution for $\veps>0$, which is in contrast to the numerical observations in \cite{caceres2011numerical} for the original model \eqref{eq:original}. Therefore, there must be some nontrivial changes in the two simplifications. Since extending the domain of $g$ from $\mathbb{R}^+$  is clearly technical, an important implication of our analysis is, that the parameter $g_L>0$ plays a crucial role for the emergence of periodic solutions in the original model \eqref{eq:original}. 
    Our analysis indicates that the non-separable velocity field not only bring challenges at a technical level
    but also makes an essential contribution to more complicated dynamics.  Whereas, the role of the leak conductance $g_L>0$ shall not be further explored in this work.

Finally, we remark that the model reduction procedure for the simplified voltage-conductance model \eqref{eq:nonlinear-toy}, by which a kinetic equation is reduced to its Gaussian solutions, do not rely on the specific form of $g_{\myin}(t)$ and $a(t)$ in \eqref{eq:para-general}. Such a model reduction strategy can be used to analyze more general cases, such as nonlinear dependence on the firing rate $N(t)$ in  $g_{\myin}$ and $a$, or incorporating time-delay effects.

    The rest of this paper is arranged as follows. In Section \ref{sec:2} we present the simplified model, its model reduction framework and summarize its long time behavior, but the proofs of the long time behavior results are given in Section \ref{sc:3longtime}. The  fast conductance limit is derived and  studied in Section \ref{sc:4fcl}.

    \section{Simplified models and long time behavior}\label{sec:2}

In this section, we elaborate on the derivation of several simplified models with the objective of understanding their long time behaviors. Our main goal is to find conditions under which periodic solutions may appear. We start from a direct simplification from \eqref{eq:original}, and then consider two further reduced models. 

\subsection{A simplified  voltage-conductance model}
On the original model \eqref{eq:original}, we make the following two simplifications, as discussed in the introduction,
\begin{enumerate}
	\item We assume that the velocity field in voltage $v$ is given by $J_v(v,g)=g$ instead of $J_v(v,g)=-g_Lv+g(V_E-v)$.
	\item We extend the domain of conductance $g$ from $(0,\infty)$ to $(-\infty,+\infty)$.
\end{enumerate}
Then we get the following equation:
\begin{equation}\label{eq:nonlinear-toy}
\p_t p+g\p_v p+\p_g\bigl((g_{\text{in}}(t)-g)p-a(t)\p_gp\bigr)=0,\quad v\in(0,V_F),\ g\in\mathbb{R},\ t>0,
\end{equation}
and the boundary condition in voltage $v$ is simplified to 
\begin{equation}\label{nonlinear-toy-bc}
p(t,V_F,g)-p(t,0,g)=0,\ \forall t>0,g\in\mathbb{R}.
\end{equation}
For the conductance $g$, we no longer need a boundary condition at $g=0$ thanks to the extension of the domain.
For the initial data, we assume it is a probability density function on $(0,V_F)\times(-\infty,+\infty)$, denoted as $p_{\text{init}}$, so
\begin{equation}\label{nonlinear-toy-IC}
    p(0,v,g)=p_{\text{init}}(v,g),\quad v\in(0,V_F),g\in\mathbb{R}.
\end{equation}
The expressions of $g_{\text{in}}(t)$ and $a(t)$ depend on the firing rate $N(t)$ in the same way as in the original model given by
	\begin{equation}\label{eq:para-general}
	g_{\text{in}}(t)=g_0+g_1N(t),\ a(t)=a_0+a_1N(t),\quad g_0,g_1,a_0,a_1>0.
	\end{equation}
We define the firing rate $N(t)$ in a similar manner as the integration of flux at voltage $v=V_F$, over $g>0$, i.e.,
	\begin{equation}\label{eq:nonliner-toy-fire}
	N(t):=\int_{0}^{\infty}gp(t,V_F,g)dg.
	\end{equation}

	Though simplified, this model \eqref{eq:nonlinear-toy} inherits the nonlinear mechanism of the original model \eqref{eq:original} -- dependence of $g_{\text{in}}(t),a(t)$ on  $N(t)$. And the firing rate $N(t)$ still depends on the flux at one voltage value $v=V_F$ only.
	
    Our two simplifications tackle two difficulties of \eqref{eq:original}, as discussed in the introduction.
    
    Regarding the first simplification, our analysis applies to more general velocity fields $J_v(v,g)=gf(v)$, as long as $f(v)$ is a continuous and positive function on $[0,V_F]$. In fact, by a change of variable given in Section \ref{sc:gen-vel}, we reduce the more general case to the case $J_v(v,g)=g$. The essential difference is that in the original model \eqref{eq:original}, the velocity field $J_v(v,g)=-g_Lv+g(V_E-v)$ \eqref{original-velocity-v} is not in a separable form $h(g)f(v)$, which brings difficulties. However, prior to this work it is unclear whether such a non-separable velocity field also make an essential contribution to the dynamics. In our simplified model, the separable velocity field $J_v(v,g)=gf(v)$ makes a simpler boundary condition in $v$ \eqref{nonlinear-toy-bc}, and allows a separation of variables between $v$ and $g$.
    
	Our second modification, extending the domain of conductance $g$ to $\mathbb{R}$, is a technical one.  It is numerically observed that these neural networks maintain high conductance values away from zero, see \cite{caceres2011numerical}. To be specific, in $g$ direction, the typical profile of solutions is a Gaussian whose center is roughly $g_{\text{in}}(t)$, which is positive and away from $0$. This is the case both in the original model \eqref{eq:original} and our simplified model \eqref{eq:nonlinear-toy}. For such typical profiles, both the density at zero in the original model, and the density for $g\leq 0$ in our simplified model are small. Therefore intuitively this extension from $\mathbb{R}^+$ to $\mathbb{R}$ should not make much difference to the behavior of solutions. Moreover, the no-flux boundary condition \eqref{original-bc-g} imposed in the original model  brings many technical difficulties, and in the present simplification they are avoided.
	
	Throughout this work we assume the initial data $p_{\init}$ is a probability density function \eqref{nonlinear-toy-IC} and $g_{\myin}(t),a(t)$ depends on $N(t)$ in a linear form with $g_0,g_1,a_0,a_1>0$ as in \eqref{eq:para-general}.
\subsection{Asymptotic simplification towards a $v$-homogeneous regime}

Now we analyze the simplified model \eqref{eq:nonlinear-toy}. Let us consider the Fourier expansion in $v$ direction
	\begin{equation}\label{fourier-v}
\begin{aligned}
 p(t,v,g)&=\frac{1}{V_F}\sum_{k=-\infty}^{+\infty}p_k(t,g)e^{ikv\frac{2\pi}{V_F}},\quad \text{where}\\
p_k(t,g):&=\int_{0}^{V_F}p(t,v,g)e^{-ikv\frac{2\pi}{V_F}}dv,\quad k\in\mathbb{Z}.
\end{aligned}
	\end{equation}Plugging the expansion \eqref{fourier-v} in \eqref{eq:nonlinear-toy}, we get that each $p_k$ solves the following equation,
	\begin{equation}\label{eq:fourier-each-k}
	\p_t p_k+ik\frac{2\pi}{V_F}gp_k=\p_g\bigl((-g_{\text{in}}(t)+g)p_k+a(t)\p_gp_k\bigr),\quad g\in\mathbb{R},\ t>0.
	\end{equation}
	Thanks to the simplification of the velocity field in $v$, the Fourier modes are ``separated'': In the equation for the $k$-th mode $p_k$ \eqref{eq:fourier-each-k}, only $p_k$ itself shows up explicitly, and different modes are coupled implicitly through the dependence of $g_{\text{in}}(t),a(t)$ on $N(t)$. 
	
	Our first result on the long time behavior is that inhomogeneous modes in $v$ , i.e., $p_k$ with $k\neq0$ in the expansion \eqref{fourier-v}, diminish exponentially. To show this, we first derive semi-explicit formulas for \eqref{eq:fourier-each-k}, which reveals the solution structure.

	Let us consider the Fourier transform in $g$: $\hat{p}_k(t,\xi):=\frac{1}{\sqrt{2\pi}}\int_{-\infty}^{+\infty}e^{-ig\xi}p_k(t,g)dg$. In terms of the Fourier transform $\hat{p}_k$, \eqref{eq:fourier-each-k} becomes
		\begin{align*}
\p_t\hat{p}_k-k\pvf \p_{\xi}\hat{p}_k=-ig_{\myin}(t) {\xi}\hat{p}_k-\xi\p_{\xi}\hat{p}_k-a(t)\xi^2\hat{p}_k,
		\end{align*} which simplifies to 
		\begin{equation}\label{eq:mode-k-lambda-four}
\p_t\hat{p}_k+[\xi-k\pvf] \p_{\xi}\hat{p}_k=-(a(t)\xi^2+ig_{\myin}(t)\xi)\hat{p}_k,
		\end{equation}
		which is a first order equation in $\xi$ with the following characteristic 
		\begin{equation}\label{eq:char}
\frac{d\xi_k(t)}{dt}=\xi_k(t)-k\pvf,
		\end{equation}\text{whose solution is } 
		\begin{equation}\label{eq:char-2}
		    [\xi_k(t)-k\pvf]=e^{t-s}[\xi_k(s)-k\pvf]
		\end{equation}
		Solving the equation \eqref{eq:mode-k-lambda-four} along its characteristic \eqref{eq:char}, we get
		\begin{equation}\label{solu-fourier}
		    \hat{p}_k(t,\xi_k(t))=\hat{p}_k(0,\xi_k(0))\exp\left(-\int_0^t[a(s)\xi_k(s)^2+ig_{\myin}(s)\xi_k(s)]ds\right).
		\end{equation}
		We can intuitively see why nonzero modes $p_k\,(k\neq0)$ decays from \eqref{solu-fourier}, by looking at the magnitude of $\hat{p}$ at $\xi=k\frac{2\pi}{V_F}$, which is the fixed point of characteristics of \eqref{eq:char},
		\begin{equation*}
		   |\hat{p}_k(t,k\frac{2\pi}{V_F})|=|\hat{p}_k(0,k\frac{2\pi}{V_F})|\left|\exp\left(-(k\frac{2\pi}{V_F})^2\int_0^ta(s)ds\right)\right|.
		\end{equation*}
		When $k=0$, the dissipation degenerates at $\xi=0$ and $|\hat{p}_0(t,0)|$ does not change. While in the case $k\neq0$, $|\hat{p}_k(t,k\frac{2\pi}{V_F})|$ decays exponentially with a rate at least $a_0(k\frac{2\pi}{V_F})^2>0$, since from \eqref{eq:para-general} we have $a(s)\geq a_0>0$. 
		
		Applying the inverse transform on \eqref{solu-fourier}, we can derive the solution formulas in Lemma \ref{lemma-explicit-solu} below. The detailed calculation is given in Appendix \ref{app:deri}. 
\begin{lemma}\label{lemma-explicit-solu}
The solution $p_k(t,g)$ of the equation \eqref{eq:fourier-each-k}, the $k$-th mode of the Fourier expansion in $v$ \eqref{fourier-v} for \eqref{eq:nonlinear-toy}, is given by
    \begin{equation}\label{explicit——k}
	p_k(t,g)=e^{ik(\frac{2\pi}{V_F})g}(p_{t,k}\ast G_{t,k})(g).
	\end{equation} 
	Here $p_{t,k}$ is a shrinkage of $p_{0,k}$, which is the initial data for the $k$-th Fourier mode multiplied a shift in frequency:
	\begin{equation}\label{shrinkage}
	p_{t,k}(y):=e^{t}p_{0,k}(e^ty),\quad p_{0,k}(g):=e^{-ik\frac{2\pi}{V_F}g}\int_{0}^{V_F}p_{\init}(v,g)e^{-ikv\frac{2\pi}{V_F}}dv.
	\end{equation} 
	And $G_{t,k}$ is a modified Gaussian with a phase factor and a decay factor, given by
	\begin{equation}\label{eq:modified-Gaussian}
	G_{t,k}(z)=\frac{1}{\sqrt{2\pi C(t)}}\exp\left(-\frac{(z-B(t))^2}{2C(t)}\right)\exp\left(ik\frac{2\pi}{V_F}\myBtwo(t,z)\right)\exp\left(-k^2(\frac{2\pi}{V_F})^2D(t)\right).
	\end{equation}Here the mean $B(t)$ and the variance $C(t)$ are given by:
	\begin{equation}\label{eq:BC}
	\begin{aligned}
		B(t)=\int_{0}^{t}e^{-(t-s)}g_{\text{in}}(s)ds=\int_{0}^{t}e^{-(t-s)}(g_0+g_1N(s))ds.\\
	C(t)=2\int_{0}^{t}e^{-2(t-s)}a(s)ds=2\int_{0}^{t}e^{-2(t-s)}(a_0+a_1N(s))ds.
	\end{aligned}
	\end{equation}
	Moreover $\myBtwo(t,z)$ and $D(t)$ are given by
	\begin{equation}\label{def-B2}
	\myBtwo(t,z)=-(z-B(t))\frac{\int_{0}^{t}e^{s-t}a(s)ds}{\int_{0}^{t}e^{2(s-t)}a(s)ds}-\int_{0}^{t}g_{\text{in}}(s)ds,
	\end{equation} and
	\begin{equation}\label{decay-D(t)}
	D(t)=\int_{0}^{t}a(s)ds-\frac{(\int_{0}^{t}e^{s-t}a(s)ds)^2}{\int_{0}^{t}e^{2(s-t)}a(s)ds}\geq 0.
	\end{equation}
\end{lemma}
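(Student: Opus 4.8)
The plan is to start from formula \eqref{solu-fourier}, which already solves \eqref{eq:fourier-each-k} in Fourier variables along the characteristics \eqref{eq:char}, make this formula fully explicit in terms of the endpoint variable $\xi$, and then invert the transform. Writing $\omega := k\frac{2\pi}{V_F}$ and $\eta := \xi - \omega$, I would first use \eqref{eq:char-2} to express the characteristic reaching $(t,\xi)$ as $\xi_k(s) = \omega + e^{s-t}\eta$, so that $\xi_k(0) = \omega + e^{-t}\eta$. Substituting this into the exponential integral of \eqref{solu-fourier} and expanding $\xi_k(s)^2$ splits the exponent into terms that are constant, linear, and quadratic in $\eta$, with coefficients being exactly the time integrals $\int_0^t a\,ds$, $\int_0^t e^{s-t}a\,ds$, $\int_0^t e^{2(s-t)}a\,ds$, $\int_0^t g_{\myin}\,ds$, and $\int_0^t e^{s-t}g_{\myin}\,ds$ that appear in \eqref{eq:BC}--\eqref{decay-D(t)}.

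Second, I would identify the initial factor $\hat p_k(0,\xi_k(0)) = \hat p_k(0,\omega + e^{-t}\eta)$ with the transform of the shrunk, modulated datum $p_{t,k}$ from \eqref{shrinkage}. This uses two elementary transform identities: the modulation identity $\widehat{p_{0,k}}(\zeta) = \hat p_k(0,\omega+\zeta)$, which is precisely why the factor $e^{-ik\frac{2\pi}{V_F}g}$ is built into the definition of $p_{0,k}$, and the scaling identity $\widehat{p_{t,k}}(\eta) = \widehat{p_{0,k}}(e^{-t}\eta)$ coming from $p_{t,k}(y) = e^t p_{0,k}(e^t y)$. Together they give $\hat p_k(0,\xi_k(0)) = \widehat{p_{t,k}}(\eta)$.

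Third, I would apply the inverse Fourier transform in $\xi$ and change variables to $\eta$. The shift $\xi = \omega + \eta$ factors the kernel as $e^{ig\xi} = e^{i\omega g}e^{ig\eta}$, peeling off the prefactor $e^{ik\frac{2\pi}{V_F}g}$. What remains is the inverse transform of the product of $\widehat{p_{t,k}}(\eta)$ with the Gaussian-type factor $E(\eta) := \exp\bigl(-(\int_0^t e^{2(s-t)}a\,ds)\,\eta^2 - \cdots\bigr)$ collected in the first step; by the convolution theorem this is the convolution $p_{t,k}\ast G_{t,k}$, where $G_{t,k}$ is defined as the inverse transform of $E$. The assumption $a(s)\ge a_0>0$ guarantees that the quadratic coefficient $\int_0^t e^{2(s-t)}a\,ds$ is strictly positive, so $E$ is genuinely Gaussian, all integrals converge, and the convolution is well defined.

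Finally, the remaining task is to compute $G_{t,k}$ explicitly and match it to \eqref{eq:modified-Gaussian}, which I expect to be the main bookkeeping obstacle, since $E$ carries a genuinely complex linear term in $\eta$ (an imaginary part from the $g_{\myin}$ integral and a real part proportional to $\omega$). I would complete the square in $\eta$, including this complex shift; the Gaussian integral then produces a real Gaussian in $z$ centered at $B(t)$ with variance $C(t)$, while the leftover constant from the completed square splits into exactly three pieces. The imaginary terms combine with $-i\omega\int_0^t g_{\myin}\,ds$ into the phase $ik\frac{2\pi}{V_F}\myBtwo(t,z)$ with $\myBtwo$ as in \eqref{def-B2}, and the real terms combine with $-\omega^2\int_0^t a\,ds$ into $-k^2(\frac{2\pi}{V_F})^2 D(t)$ with $D$ as in \eqref{decay-D(t)}. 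The delicate point is to track the $\sqrt{2\pi}$ normalizations of the chosen Fourier convention through the convolution theorem so that the Gaussian prefactor emerges as $1/\sqrt{2\pi C(t)}$; this routine but careful accounting is what is carried out in Appendix \ref{app:deri}.
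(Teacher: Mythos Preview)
Your proposal is correct and follows essentially the same route as the paper's Appendix~\ref{app:deri}: pass to the Fourier side in $g$, solve along the characteristics \eqref{eq:char} to reach \eqref{solu-fourier}, expand the exponent as a quadratic in the shifted variable $\eta=\xi-k\frac{2\pi}{V_F}$, recognize the initial-data factor as $\widehat{p_{t,k}}(\eta)$ via the modulation and scaling identities, and then invert using the convolution theorem and a complex completion of the square to obtain $G_{t,k}$ in the form \eqref{eq:modified-Gaussian}. The only differences are notational (the paper writes $\mu$ for your $\omega$ and packages the quadratic as $H_t$), and your anticipated bookkeeping of the $\sqrt{2\pi}$ normalization is exactly what the appendix carries out.
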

One can see $D(t)\geq 0$, by Cauchy-Schwartz inequality $(\int_{0}^{t}a(s)ds)(\int_{0}^{t}e^{2(s-t)}a(s)ds)\geq(\int_{0}^{t}e^{s-t}a(s)ds)^2$. Recall \eqref{eq:para-general} $a(s)=a_0+a_1N(s)>0$, since $a_0,a_1>0$ and $N(s)\geq 0$.

	We remark that here $B,C,\myBtwo,D$ are independent of $k$, therefore $G_{t,k}$ depends on $k$ only through two explicit coefficients before $\myBtwo,D$. 

	We denote the first two terms in \eqref{eq:modified-Gaussian} as $\bar{G}_{t,k}$, whose $L^1$ norm is $1$,
	\begin{equation}\label{def-barG}
	\bar{G}_{t,k}(z):=\frac{1}{\sqrt{2\pi C(t)}}\exp\left(-\frac{(z-B(t))^2}{2C(t)}+ik\frac{2\pi}{V_F}\myBtwo(t,z)\right),\quad \|\bar{G}_{t,k}(z)\|_{L^1(\mathbb{R})}=1.
	\end{equation}
	Then combing \eqref{fourier-v} and \eqref{explicit——k}, we get a formula for the solution of \eqref{eq:nonlinear-toy}
	\begin{equation}\label{formula-v,g}
	p(t,v,g)=\frac{1}{V_F}\sum_{k=-\infty}^{+\infty}\exp\left(-k^2(\frac{2\pi}{V_F})^2D(t)+i\frac{2\pi}{V_F}kv+i\frac{2\pi}{V_F}kg\right)(p_{t,k}*\bar{G}_{t,k})(g).
	\end{equation}
	 Decay of non-zero modes in $v$ emerges from \eqref{formula-v,g}. The $k$-th mode decays with the factor $\exp\left(-k^2(\frac{2\pi}{V_F})^2D(t)\right)$, which comes from the diffusion in $g$ direction. Through the transport term $g\p_vp$, diffusion in $g$ direction is ``passed to'' $v$ direction, although there is no explicit diffusion in $v$. In literature such effect is called hypoellipticity, in view of the regularizing effect, or hypocoercivity, in view of the convergence to a steady state \cite{villani2009hypocoercivity}. We can also rewrite the formula \eqref{formula-v,g} as the shrinkage of initial data in $g$ direction convoluting a ``Green function'':
	\begin{equation}
	    p(t,v,g)=\int_0^{V_F}\int_{-\infty}^{\infty}e^tp_{\text{init}}(t,\tilde{v},e^t\tilde{g})\left[\sum_{k=-\infty}^{+\infty}\frac{1}{V_F}e^{-k^2(\frac{2\pi}{V_F})^2D(t)+ik\frac{2\pi}{V_F}(v-\tilde{v})+ik\frac{2\pi}{V_F}(g-e^t\tilde{g})}\bar{G}_{t,k}(g-\tilde{g})\right]d\tilde{v}d\tilde{g}.
	\end{equation}

To gain exponential decay, we need a further estimate on $D(t)$ in the following Lemma. 
	\begin{lemma}\label{lemma-D(t)}
	For $D(t)$ defined in \eqref{decay-D(t)}, the following lower bound holds
	\begin{equation}\label{eq:D(t)lower}
	    D(t)\geq a_0(t-2\frac{e^t-1}{e^t+1}),\quad t\geq0.
	\end{equation}
	\end{lemma}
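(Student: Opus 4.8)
The plan is to recognize the quantity $D(t)$ in \eqref{decay-D(t)} as a Cauchy--Schwarz defect and rewrite it as a minimization problem. Abbreviate $I_0=\int_0^t a(s)\,ds$, $I_1=\int_0^t e^{s-t}a(s)\,ds$, and $I_2=\int_0^t e^{2(s-t)}a(s)\,ds$, so that $D(t)=I_0-I_1^2/I_2$. Viewing $a(s)\,ds$ as a positive measure on $[0,t]$ and $\phi(s):=e^{s-t}$ as a test function, the three integrals are $I_0=\|1\|^2$, $I_1=\langle 1,\phi\rangle$, and $I_2=\|\phi\|^2$ in the weighted space $L^2(a\,ds)$. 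Completing the square in $\lambda$ then yields the identity
\begin{equation}
D(t)=\min_{\lambda\in\mathbb{R}}\int_0^t\bigl(1-\lambda e^{s-t}\bigr)^2 a(s)\,ds,
\end{equation}
the squared distance from $1$ to $\mathrm{span}\{\phi\}$; indeed the optimal $\lambda^\ast=I_1/I_2$ reproduces $I_0-I_1^2/I_2$.

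The second step is to exploit $a(s)=a_0+a_1N(s)\geq a_0>0$, which holds pointwise since $N(s)\geq 0$. Because the integrand $(1-\lambda e^{s-t})^2$ is nonnegative for every $\lambda$, the integral is monotone in the weight, so for each fixed $\lambda$ one has $\int_0^t(1-\lambda e^{s-t})^2 a(s)\,ds\geq a_0\int_0^t(1-\lambda e^{s-t})^2\,ds$. Taking the infimum over $\lambda$ on both sides preserves the inequality, giving
\begin{equation}
D(t)\geq a_0\min_{\lambda\in\mathbb{R}}\int_0^t\bigl(1-\lambda e^{s-t}\bigr)^2\,ds.
\end{equation}

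Third, I would carry out the now purely elementary unweighted minimization. Expanding produces $t-2\lambda(1-e^{-t})+\tfrac{\lambda^2}{2}(1-e^{-2t})$; the minimizer is $\lambda^\ast=2(1-e^{-t})/(1-e^{-2t})$, and the minimal value is $t-2(1-e^{-t})^2/(1-e^{-2t})$. Factoring $1-e^{-2t}=(1-e^{-t})(1+e^{-t})$ and multiplying numerator and denominator by $e^t$ simplifies the subtracted term to $2(e^t-1)/(e^t+1)$, which is exactly the claimed bound $t-2\frac{e^t-1}{e^t+1}$. I would also remark that this estimate is sharp: equality holds when $a\equiv a_0$, i.e. when $N\equiv 0$, which is consistent with $D(t)\geq 0$.

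The only genuine obstacle is the direction of the estimate. Since $D(t)$ subtracts the quotient $I_1^2/I_2$, one cannot bound it from below by naively replacing $a$ with $a_0$ inside each of $I_0,I_1,I_2$ separately: the quotient depends on $a$ non-monotonically, and a crude substitution may push the estimate the wrong way. The variational reformulation is precisely what resolves this, as it recasts $D(t)$ as an infimum of manifestly nonnegative integrals, in which the pointwise lower bound $a\geq a_0$ can be applied monotonically and uniformly in $\lambda$.
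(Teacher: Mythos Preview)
Your proof is correct and takes a genuinely different route from the paper's. The paper splits $a(s)=a_0+a_1N(s)$ and applies the elementary inequality $\frac{(b_1+b_2)^2}{c_1+c_2}\le \frac{b_1^2}{c_1}+\frac{b_2^2}{c_2}$ (with $b_j$ the $e^{s-t}$--weighted integrals and $c_j$ the $e^{2(s-t)}$--weighted integrals against $a_0$ and $a_1N(s)$ respectively) to obtain
\[
D(t)\ge \Bigl[\int_0^t a_0\,ds-\tfrac{(\int_0^t e^{s-t}a_0\,ds)^2}{\int_0^t e^{2(s-t)}a_0\,ds}\Bigr]+\Bigl[\int_0^t a_1N(s)\,ds-\tfrac{(\int_0^t e^{s-t}a_1N(s)\,ds)^2}{\int_0^t e^{2(s-t)}a_1N(s)\,ds}\Bigr],
\]
then drops the second bracket (nonnegative by Cauchy--Schwarz) and evaluates the first explicitly. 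Your variational reformulation $D(t)=\min_\lambda\int_0^t(1-\lambda e^{s-t})^2 a(s)\,ds$ achieves the same end by a monotonicity argument in the weight: since the integrand is nonnegative, replacing $a$ by its pointwise lower bound $a_0$ can only decrease the value, uniformly in $\lambda$. Both arguments use exactly the same input ($a\ge a_0$) and land on the same elementary computation for $a\equiv a_0$; your route is a bit more conceptual in that it explains transparently why the non-monotone quotient $I_1^2/I_2$ poses no obstacle and why equality holds at $N\equiv 0$, while the paper's splitting is more hands-on but requires spotting the right two-term Cauchy--Schwarz identity.
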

	\begin{proof}[Proof of Lemma \ref{lemma-D(t)}]
		 Recalling $a(s)=a_0+a_1N(s)$ in \eqref{eq:para-general}, we rewrite $D(t)$ in \eqref{decay-D(t)} as
	\begin{align}\label{tmp-Dt}
		D(t)&=\int_{0}^{t}[a_0+a_1N(s)]ds-\frac{(\int_{0}^{t}e^{s-t}[a_0+a_1N(s)]ds)^2}{\int_{0}^{t}e^{2(s-t)}[a_0+a_1N(s)]ds}.
	\end{align}
	Note that $N(s)\geq0$, and let us consider an extreme case first, if $\int_{0}^{t}e^{2(s-t)}a_1N(s)ds=0$ then we also have $\int_0^tN(s)ds=\int_{0}^{t}e^{(s-t)}a_1N(s)ds=0$. In this case, the desired inequality becomes an equality:
	\begin{equation*}
	    	D(t)=\int_{0}^{t}a_0ds-\frac{(\int_{0}^{t}e^{s-t}a_0ds)^2}{\int_{0}^{t}e^{2(s-t)}a_0ds}=a_0(t-2\frac{e^t-1}{e^t+1}).
	\end{equation*}
	For the general case $\int_{0}^{t}e^{2(s-t)}a_1N(s)ds>0$. We shall use the following elementary inequality:	$\frac{b_1^2}{c_1}+\frac{\myBtwo^2}{c_2}\geq \frac{(b_1+\myBtwo)^2}{c_1+c_2}$ for $b_1,\myBtwo\in\mathbb{R},c_1,c_2>0$, which is a consequence of using the Cauchy-Schwartz inequality on $\mathbb{R}^2$ for vectors $(\sqrt{c_1},\sqrt{c_2})$ and $(\frac{b_1}{\sqrt{c_1}},\frac{\myBtwo}{\sqrt{c_2}})$, to get $(c_1+c_2)(\frac{b_1^2}{c_1}+\frac{\myBtwo^2}{c_2})\geq(b_1+\myBtwo)^2$. Taking $b_1=\int_{0}^{t}e^{s-t}a_0ds,\myBtwo=\int_{0}^{t}e^{s-t}a_1N(s)ds$, $c_1=\int_{0}^{t}e^{2(s-t)}a_0ds$ and $c_2=\int_{0}^{t}e^{2(s-t)}a_1N(s)ds$, we apply the inequality to \eqref{tmp-Dt} and derive
	\begin{align*}
		D(t)\geq \int_{0}^{t}[a_0+a_1N(s)]ds-\frac{(\int_{0}^{t}e^{s-t}a_0ds)^2}{\int_{0}^{t}e^{2(s-t)}a_0ds}-\frac{(\int_{0}^{t}e^{s-t}a_1N(s)ds)^2}{\int_{0}^{t}e^{2(s-t)}a_1N(s)ds}.
	\end{align*}
	Therefore we have
	\begin{align}\notag
D(t)&\geq \left[\int_{0}^{t}a_0ds-\frac{(\int_{0}^{t}e^{s-t}a_0ds)^2}{\int_{0}^{t}e^{2(s-t)}a_0ds}\right]+\left[\int_{0}^{t}a_1N(s)ds-\frac{(\int_{0}^{t}e^{s-t}a_1N(s)ds)^2}{\int_{0}^{t}e^{2(s-t)}a_1N(s)ds}\right]\\&\geq \left[\int_{0}^{t}a_0ds-\frac{(\int_{0}^{t}e^{s-t}a_0ds)^2}{\int_{0}^{t}e^{2(s-t)}a_0ds}\right]=a_0(t-2\frac{e^t-1}{e^t+1}).\label{Dt_lower_bound}
	\end{align} In the second inequality above, we use the Cauchy-Schwartz inequality again to obtain $[\int_{0}^{t}a_1N(s)ds-\frac{(\int_{0}^{t}e^{s-t}a_1N(s)ds)^2}{\int_{0}^{t}e^{2(s-t)}a_1N(s)ds}]\geq 0$. The proof is completed.
	\end{proof}
Now we prove our first result on the long time behavior, the asymptotic simplification towards the $v$-homogeneous problem for \eqref{eq:nonlinear-toy}. Precisely, the solution of \eqref{eq:nonlinear-toy} converges to its zeroth mode in the Fourier expansion in $v$ \eqref{fourier-v} as in the following theorem.
	\begin{theorem}\label{thm:decay-v}
		Suppose $p(t,v,g)$ is a solution of \eqref{eq:nonlinear-toy} and $p_0(t,g)$ is the zeroth mode in $v$ defined in \eqref{fourier-v}. Then $p(t,v,g)$ converges to the $v$-homogeneous mode $\frac{1}{V_F}p_0(t,g)$ exponentially. Precisely, we have
		\begin{equation}
		\begin{aligned}
		\|p(t,v,g)-\frac{1}{V_F}p_0(t,g)\|_{L^1((0,V_F)\times\mathbb{R})}&\leq 2\frac{e^{-\pvf^2d(t)}}{1-e^{-\pvf^2d(t)}},
		\end{aligned}
		\end{equation}
		where $d(t)$ is given by 
		\begin{equation}\label{def-dt}
		    d(t):=a_0(t-2\frac{e^t-1}{e^t+1})> 0,\quad t>0.	
		\end{equation}
	\end{theorem}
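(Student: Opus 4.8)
The plan is to read the estimate directly off the explicit solution formula \eqref{formula-v,g} of Lemma \ref{lemma-explicit-solu}, combined with the lower bound on $D(t)$ from Lemma \ref{lemma-D(t)}. First I would isolate the $k=0$ summand in \eqref{formula-v,g}: for $k=0$ the phase and decay factors of the modified Gaussian are trivial, so $G_{t,0}=\bar G_{t,0}$ and that summand equals exactly $\frac{1}{V_F}(p_{t,0}*\bar G_{t,0})(g)=\frac{1}{V_F}p_0(t,g)$. Hence the quantity to be estimated is precisely the tail over $k\ne 0$:
\begin{equation*}
p(t,v,g)-\frac{1}{V_F}p_0(t,g)=\frac{1}{V_F}\sum_{k\neq 0}\exp\Bigl(-k^2\pvf^2 D(t)+i\pvf kv+i\pvf kg\Bigr)(p_{t,k}*\bar G_{t,k})(g).
\end{equation*}

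Next I would take the $L^1((0,V_F)\times\mathbb{R})$ norm and push it past the sum with the triangle inequality. The two purely imaginary exponents have modulus one, so each summand has modulus $\frac{1}{V_F}e^{-k^2\pvf^2 D(t)}\,|(p_{t,k}*\bar G_{t,k})(g)|$, which is independent of $v$; integrating in $v$ contributes a factor $V_F$ cancelling the prefactor $\frac{1}{V_F}$. This reduces the bound to $\sum_{k\neq 0}e^{-k^2\pvf^2 D(t)}\,\|p_{t,k}*\bar G_{t,k}\|_{L^1(\mathbb{R})}$.

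The key uniform estimate is then $\|p_{t,k}*\bar G_{t,k}\|_{L^1}\le 1$ for every $k$. By Young's convolution inequality it is at most $\|p_{t,k}\|_{L^1}\|\bar G_{t,k}\|_{L^1}$, and $\|\bar G_{t,k}\|_{L^1}=1$ by \eqref{def-barG}. The shrinkage $p_{t,k}(y)=e^{t}p_{0,k}(e^{t}y)$ in \eqref{shrinkage} preserves the $L^1$ norm, so $\|p_{t,k}\|_{L^1}=\|p_{0,k}\|_{L^1}$; and since $p_{\init}\ge 0$ while the exponential weights defining $p_{0,k}$ have modulus one, one has $|p_{0,k}(g)|\le\int_0^{V_F}p_{\init}(v,g)\,dv$, whence $\|p_{0,k}\|_{L^1}\le 1$ after integrating in $g$ and using the probability normalization \eqref{nonlinear-toy-IC}.

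Finally I would insert the lower bound $D(t)\ge d(t)$ of Lemma \ref{lemma-D(t)} (recall \eqref{eq:D(t)lower} and \eqref{def-dt}) and set $q:=e^{-\pvf^2 d(t)}\in(0,1)$ for $t>0$. It remains to bound the theta-type series $\sum_{k\neq 0}q^{k^2}=2\sum_{k\ge 1}q^{k^2}$: since $0<q<1$ and $k^2\ge k$ for $k\ge 1$, we have $q^{k^2}\le q^k$, so the series is dominated by the geometric sum $2\sum_{k\ge 1}q^k=2q/(1-q)$, which is exactly the asserted bound $2e^{-\pvf^2 d(t)}/(1-e^{-\pvf^2 d(t)})$. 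The computation is essentially forced once the solution formula is in hand; the only step requiring care is the uniform-in-$k$ bound $\|p_{t,k}*\bar G_{t,k}\|_{L^1}\le 1$, which hinges on combining the $L^1$-invariance of the shrinkage with the probability normalization of the initial datum, while the elementary comparison $k^2\ge k$ is what converts a genuine theta sum into the clean geometric bound.
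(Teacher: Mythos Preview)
Your proposal is correct and follows essentially the same route as the paper: isolate the $k\ne 0$ modes via the explicit formula, bound each by $e^{-k^2\pvf^2 D(t)}$ through Young's inequality together with $\|\bar G_{t,k}\|_{L^1}=1$ and $\|p_{0,k}\|_{L^1}\le 1$, then invoke Lemma \ref{lemma-D(t)} and dominate the theta sum by a geometric series using $k^2\ge k$. If anything, you make the last step more explicit than the paper, which writes the passage from $\sum_{k\ne 0}e^{-k^2\pvf^2 D(t)}$ to $2\sum_{k\ge 1}e^{-k\pvf^2 D(t)}$ as an equality rather than an inequality.
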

	Note that, defined in \eqref{def-dt}, $d(t)>0$ when $t>0$ and the leading order is $a_0t$ when $t$ goes to infinity, which implies the exponential convergence. While for $t$ goes to $0^+$, $d(t)=\frac{1}{12}a_0t^3+O(t^4)$.

\begin{proof}[Proof of Theorem \ref{thm:decay-v}]We work with the solution formula \eqref{formula-v,g}. The idea is just to utilize the decay from $e^{-k^2(\frac{2\pi}{V_F})^2D(t)}$ with a uniform bound on $e^{i\frac{2\pi}{V_F}kv}e^{i\frac{2\pi}{V_F}kg}(p_{t,k}*\bar{G}_{t,k})(g)$. First we estimate the $L^1$ norm of each $k$-th order mode $\frac{1}{V_F}e^{ik\frac{2\pi}{V_F}v}p_k(t,g)$,
	\begin{align*}
\|\frac{1}{V_F}e^{ik\frac{2\pi}{V_F}v}p_k(t,g)\|_{L^1((0,V_F)\times\mathbb{R})}&\leq \int_{0}^{V_F}\frac{1}{V_F}|e^{ik\frac{2\pi}{V_F}v}|dv\int_{\mathbb{R}}|p_k(t,g)|dg\\&=\|p_k(t,g)\|_{{L^1(\mathbb{R})}}.
	\end{align*}
	And by the formula for $p_k$ \eqref{explicit——k} and Young's convolution inequality, we obtain
	\begin{align*}
	\|p_k(t,g)\|_{{L^1(\mathbb{R})}}=\|e^{ik(\frac{2\pi}{V_F})g}(p_{t,k}*G_{t,k})(g)\|_{{L^1(\mathbb{R})}}&\leq \|p_{t,k}(g)\|_{{L^1(\mathbb{R})}}\|G_{t,k}\|_{{L^1(\mathbb{R})}}\\&=\|p_{0,k}(g)\|_{{L^1(\mathbb{R})}}e^{-k^2(\frac{2\pi}{V_F})^2D(t)}.
	\end{align*}
	In the last equality we use that 
	$p_{t,k}(y)=e^{t}p_{0,k}(e^ty)$ in \eqref{shrinkage}, and the formula for $G_{t,k}$\eqref{eq:modified-Gaussian}. Then by the second equation in \eqref{shrinkage} for $p_{0,k}(g)$, we get $$\|p_{0,k}(g)\|_{L^1(\mathbb{R})}\leq \int_{\mathbb{R}}|e^{-ik\frac{2\pi}{V_F}e^tg}|dg\int_{0}^{V_F}p_{\text{init}}(v,g)|e^{ikv\frac{2\pi}{V_F}}|dv=\int_{\mathbb{R}}\int_{0}^{V_F}p_{\text{init}}(v,g)dvdg=1,$$ since the initial data is a probability density function.
	Therefore, we have an estimate for each mode
	\begin{equation*}
\|\frac{1}{V_F}e^{ik\frac{2\pi}{V_F}v}p_k(t,g)\|_{L^1((0,V_F)\times\mathbb{R})}\leq e^{-k^2(\frac{2\pi}{V_F})^2D(t)}.
	\end{equation*}
	Then from the Fourier expansion in $v$ \eqref{fourier-v} we deduce
	\begin{align}\notag
	\|p(t,v,g)-\frac{1}{V_F}p_0(t,g)\|_{L^1((0,V_F)\times\mathbb{R})}&\leq \sum_{k\neq 0,k\in\mathbb{Z}}e^{-k^2(\frac{2\pi}{V_F})^2D(t)}= 2\sum_{k=1}^{+\infty}e^{-k(\frac{2\pi}{V_F})^2D(t)}\\&=2\frac{e^{-(\frac{2\pi}{V_F})^2D(t)}}{1-e^{-(\frac{2\pi}{V_F})^2D(t)}}.\label{tmp-pf1}
	\end{align}
	Finally by Lemma \ref{lemma-D(t)}, $D(t)\geq a_0(t-2\frac{e^t-1}{e^t+1})=d(t)$. With a direct calculation we get that $d(t)>0$ for $t>0$, and complete the proof.
\end{proof}

Theorem \ref{thm:decay-v} shows that, despite the nonlinearity in $g_{\myin}(t)$ and $a(t)$, non-homogeneous modes in $v$ always decay. This relies on the separation of variables between $v$ and $g$, which is a consequence from the simplification of the velocity field in $v$. Actually in the original model \eqref{eq:original}, it is difficult if not impossible to perform such a separation of variables, since the velocity field $J_v(v,g)=-g_Lv+g(V_E-v)$ \eqref{original-velocity-v} is not in a separable form $J_v(v,g)=h(g)f(v)$.

By Theorem \ref{thm:decay-v}, a solution of \eqref{eq:nonlinear-toy} tends to be homogeneous in $v$ as time evolves. Therefore if there is a periodic solution, the periodic dynamics should be in $g$ direction. This inspires us to consider a reduced model.

\subsection{The $v$-homogeneous problem and its Gaussian solutions}
By Theorem \ref{thm:decay-v}, for a solution of \eqref{eq:nonlinear-toy}, the $v$-homogeneous mode $\frac{1}{V_F}p_0(t,g)$ dominates in the long time. To further investigate \eqref{eq:nonlinear-toy}, we consider the case when the solution is exactly $v$-homogeneous, i.e., there is only the zeroth mode in $v$
\begin{equation}\label{v-homo-case}
p(t,v,g)=\frac{1}{V_F}p_0(t,g).
\end{equation}By the formula for each mode in Lemma \ref{lemma-explicit-solu}, a solution is $v$-homogeneous if its initial data is $v$-homogeneous. From Theorem \ref{thm:decay-v} we expect that the $v$-homogeneous case reflects the typical behavior of \eqref{eq:nonlinear-toy}.

 Now we investigate the behavior of $p_0$ in this case \eqref{v-homo-case}. As in the general case, $p_0$ satisfies the following PDE, which is the $k=0$ case in \eqref{eq:fourier-each-k}
	\begin{equation}\label{eq:reduce-p}
\p_t p_0=\p_g\bigl((-g_{\text{in}}(t)+g)p_0+a(t)\p_gp_0\bigr),\quad g\in\mathbb{R},\ t>0,
\end{equation} where $g_{\text{in}}(t)$ and $a(t)$ still given by \eqref{eq:para-general}
\begin{equation*}
	g_{\text{in}}(t)=g_0+g_1N(t),\ a(t)=a_0+a_1N(t),\quad g_0,g_1,a_0,a_1>0.
\end{equation*}
The difference is that in this $v$-homogeneous case, the firing rate $N(t)$ is totally determined by $p_0$, since there is no other mode,
\begin{equation}\label{eq:reduce-N}
N(t)=\int_{0}^{+\infty}gp(t,V_F,g)dg=\frac{1}{V_F}\int_{0}^{+\infty}gp_0(t,g)dg.
\end{equation} Therefore \eqref{eq:para-general}, \eqref{eq:reduce-p} and \eqref{eq:reduce-N} give a closed 1+1 dimensional PDE in $g$ direction. Here the firing rate is measured as a ``positive moment'' in $g$. And the nonlinearity comes from that the drift $g_{\text{in}}(t)$ and the diffusion coefficient $a(t)$ depends on the firing rate $N(t)$, as in the full model \eqref{eq:nonlinear-toy}.

Since we will investigate this $v$-homogeneous case as a reduced model from \eqref{eq:nonlinear-toy}, in the following with abuse of notation, we denote \begin{equation}
    p(t,g):=p_0(t,g).
\end{equation} And we denote the initial data as $p_{\text{init}}(g)$
\begin{equation}
    p(0,g)=p_{\text{init}}(g),\quad g\in\mathbb{R},
\end{equation}which is a probability density on $\mathbb{R}$. 

To make a further model reduction, we observe that the $v$-homogeneous system \eqref{eq:reduce-p} admits a Gaussian type special solution, whose mean and variance solve an ODE system. Based on this, we further reduce the PDE model in conductance only \eqref{eq:reduce-p} to an ODE system \eqref{eq:system-1}, defined in Proposition \ref{prop:Gaussian anstaz} below.
 \begin{proposition}\label{prop:Gaussian anstaz}
 	The $v$-homogeneous system \eqref{eq:reduce-p} admits the following Gaussian type solution
 	\begin{equation}\label{eq:Gaussian anstaz}
 	p(t,g)=\frac{1}{\sqrt{2\pi c(t)}}\exp\left(-\frac{(g-b(t))^2}{2c(t)}\right),
 	\end{equation}where the mean $b(t)$ and the variance $c(t)$ satisfy the following autonomous ODE
 	\begin{equation}\label{eq:system-1}
 	\begin{aligned}
 	\frac{db(t)}{dt}&=g_0+{g_1}N(b(t),c(t))-b(t),\\
 	\frac{dc(t)}{dt}&=2a_0+2{a_1}N(b(t),c(t))-2c(t).
 	\end{aligned}
 	\end{equation}
 	Here $N(b,c)$ is a function of $(b,c)$ which denotes the firing rate of such a Gaussian type solution: 
 	\begin{equation}\label{def-N-bc}
 	N(b,c):=\frac{1}{V_F}\int_{0}^{+\infty}g\frac{1}{\sqrt{2\pi c}}\exp\left(-\frac{(g-b)^2}{2c}\right)dg\geq 0,\quad b\in\mathbb{R},\,c>0.
 	\end{equation}
 \end{proposition}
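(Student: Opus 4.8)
The plan is to verify the ansatz by direct substitution, exploiting the fact that the linear Fokker--Planck operator in \eqref{eq:reduce-p} preserves Gaussian profiles. I assume $(b(t),c(t))$ solves \eqref{eq:system-1} with $c(t)>0$ and check that the Gaussian \eqref{eq:Gaussian anstaz} then satisfies \eqref{eq:reduce-p}. First I would take logarithmic derivatives, so that after dividing both sides of \eqref{eq:reduce-p} by the strictly positive $p$ each side becomes a polynomial of degree two in the shifted variable $g-b(t)$. The time derivative gives
\[
\frac{\p_t p}{p} = -\frac{\dot c}{2c} + \frac{\dot b\,(g-b)}{c} + \frac{\dot c\,(g-b)^2}{2c^2},
\]
and, using $\p_g p = -\frac{g-b}{c}\,p$ together with the splitting $g-g_{\text{in}} = (g-b)+(b-g_{\text{in}})$, the spatial side becomes
\[
\frac{\p_g\bigl((g-g_{\text{in}})p + a\,\p_g p\bigr)}{p} = \Bigl(1-\frac{a}{c}\Bigr) - \frac{(b-g_{\text{in}})(g-b)}{c} - \frac{(g-b)^2}{c} + \frac{a\,(g-b)^2}{c^2}.
\]

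The core of the argument is then to match the coefficients of $(g-b)^0$, $(g-b)^1$ and $(g-b)^2$. The first-order term yields $\dot b = g_{\text{in}}-b$, and the zeroth-order term yields $-\dot c/(2c) = 1 - a/c$, i.e. $\dot c = 2a - 2c$; recalling $g_{\text{in}} = g_0+g_1 N$ and $a = a_0 + a_1 N$ these are exactly the two equations of \eqref{eq:system-1}. The key consistency check---and the reason the Gaussian ansatz closes rather than being overdetermined---is that the second-order identity $\dot c/(2c^2) = -1/c + a/c^2$ is equivalent to the very same relation $\dot c = 2a-2c$. Thus the three matching conditions collapse to the two ODEs, and no further constraint is imposed; this is precisely the manifestation of Gaussians forming an invariant manifold for linear Fokker--Planck flows.

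Finally I would close the nonlinearity: for the Gaussian \eqref{eq:Gaussian anstaz} the firing rate \eqref{eq:reduce-N} evaluates to $\frac{1}{V_F}\int_0^\infty g\,p(t,g)\,dg = N(b(t),c(t))$ as defined in \eqref{def-N-bc}, so substituting $N(t)=N(b(t),c(t))$ into $g_{\text{in}}$ and $a$ turns the matched equations into precisely the autonomous system \eqref{eq:system-1}. I expect no genuine difficulty beyond careful bookkeeping; the only point requiring attention is verifying that the quadratic-order matching coincides with the constant-order condition, which is what makes the reduction self-consistent. As a minor supplementary remark I would note that $c(t)$ stays positive along the flow, since $a\ge a_0>0$ gives $\dot c \ge 2a_0 - 2c > 0$ whenever $c<a_0$, so the Gaussian remains well defined for all $t>0$.
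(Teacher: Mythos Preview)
Your proposal is correct and follows essentially the same direct-verification approach as the paper: substitute the Gaussian and read off the ODEs for $b(t)$ and $c(t)$. The paper organizes the bookkeeping slightly differently, using the identities $\partial_g\mathcal{G}=-\partial_b\mathcal{G}$ and $\partial_{gg}\mathcal{G}=2\partial_c\mathcal{G}$ so that the right-hand side is written directly as $(g_{\text{in}}-b)\partial_b\mathcal{G}+2(a-c)\partial_c\mathcal{G}$, which makes your ``consistency check'' between the constant and quadratic coefficients automatic; but the content is the same.
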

\begin{proof}[Proof of Proposition \ref{prop:Gaussian anstaz}]
Let $\mathcal{G}(g;b,c)=\frac{1}{\sqrt{2\pi c}}e^{-\frac{(g-b)^2}{2c}}$ be the Gaussian with mean $b\in\mathbb{R}$ and variance $c>0$. Then by direct calculation one obtains
\begin{equation*}
\p_g\mathcal{G}=\frac{b-g}{c}\mathcal{G}=-\p_b\mathcal{G},\quad \p_{gg}\mathcal{G}=(\frac{(b-g)^2}{c^2}-\frac{1}{c})\mathcal{G}=2\p_c\mathcal{G}.
\end{equation*}
Therefore for the ansatz $p(t,g)=\mathcal{G}(g;b(t),c(t))$ defined in \eqref{eq:Gaussian anstaz} one writes
\begin{align*}
        (g-g_{\text{in}}(t))p+a(t)\p_gp&=(b(t)-g_{\text{in}}(t))p+(g-b(t))p+a(t)\p_gp\\\notag &=(b(t)-g_{\text{in}}(t))p-c(t)\p_gp+a(t)\p_gp.
\end{align*}
Hence, we have
\begin{align}\notag
    \p_g[(g-g_{\text{in}}(t))p+a(t)\p_gp] &=\p_g[(b(t)-g_{\text{in}}(t))p+(a(t)-c(t))\p_gp]\\&=(g_{\text{in}}(t)-b(t))\p_b\mathcal{G}+2(a(t)-c(t))\p_{c}\mathcal{G}.\label{tmp-Gauss}
\end{align}
On the other hand, by the chain rule we get
\begin{equation*}
    \p_tp=\frac{db(t)}{dt}\p_b\mathcal{G}+\frac{dc(t)}{dt}\p_c\mathcal{G}.
\end{equation*} Compare this with \eqref{tmp-Gauss}, we get that $p$ satisfies the equation \eqref{eq:reduce-p} if $\frac{db(t)}{dt}=g_{\text{in}}(t)-b(t)$ and $\frac{dc(t)}{dt}=2(a(t)-c(t))$. Recall \eqref{eq:para-general} $g_{\text{in}}=g_0+g_1N(t),a(t)=a_0+a_1N(t)$, and note that when $p(t,\cdot)$ is a Gaussian, the firing $N(t)$ can be expressed as a function of its mean and variance as in \eqref{def-N-bc}, we deduce that when $(b(t),c(t))$ solves the ODE \eqref{eq:system-1}, $p$ defined as \eqref{eq:Gaussian anstaz} is a solution of \eqref{eq:reduce-p}.
\end{proof}

In summary, we derive two reduced models from the 1+2 dimensional voltage-conductance PDE \eqref{eq:nonlinear-toy}. First we consider the $v$-homogeneous case of \eqref{eq:nonlinear-toy} and get a 1+1 dimensional PDE \eqref{eq:reduce-p}. Then from a Gaussian type special solution, we deduce an ODE system \eqref{eq:system-1}. As we will show through the long time behavior result in the next section, these reduced models indeed reflect typical behavior of \eqref{eq:nonlinear-toy}.

\subsection{Long time behavior: Main results}

Our main result is a thorough study of the long time behavior of these three models \eqref{eq:nonlinear-toy}, \eqref{eq:reduce-p} and \eqref{eq:system-1}. When $g_1/V_F\geq 1$, the firing $N(t)$ diverges to infinity as $t$ goes to infinity, while otherwise when $0<g_1/V_F<1$, the solution converges to the unique steady state.

The divergence to infinity of the firing rate $N(t)$ reflects a model for excitatory neuron networks with excessive feedback. In an excitatory network, one neuron's firing excites other neurons. Larger firing rate $N(t)$ results in a larger $g_{\text{in}}(t)$, which may in turns makes the firing rate larger. This is a positive feedback. And when ${g_1}/{V_F}\geq 1$, this positive feedback is so strong that $N(t)$ diverges to infinity as time evolves. At the level of the $v$-homogeneous model \eqref{eq:reduce-p}, we can prove the following proposition.
\begin{proposition}\label{prop:g1>=1}
When $g_1/V_F\geq 1$, for a solution $p(t,g)$ of the $v$-homogeneous problem \eqref{eq:reduce-p}, the firing rate diverges to infinity as time evolves, i.e.,
\begin{equation}
N(t)\rightarrow+\infty,\quad \text{as }t\rightarrow+\infty.
\end{equation}
\end{proposition}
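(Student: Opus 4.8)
The plan is to track the first moment of the solution in $g$ and show that it grows at least linearly in time, which then forces $N(t)$ to diverge. Define the mean $m(t):=\int_{\mathbb{R}}g\,p(t,g)\,dg$, which is well defined because the $k=0$ case of Lemma \ref{lemma-explicit-solu} writes $p(t,\cdot)$ as a rescaled initial marginal convolved with the genuine Gaussian $\bar G_{t,0}$ of positive variance $C(t)$, so $p(t,\cdot)$ has Gaussian tails (provided $p_{\text{init}}$ has a finite first moment). Multiplying the $v$-homogeneous equation \eqref{eq:reduce-p} by $g$, integrating over $\mathbb{R}$ and integrating by parts (the boundary terms at $g=\pm\infty$ vanishing, and the diffusion term contributing a total derivative that integrates to zero), I expect to recover the moment ODE
\begin{equation*}
\frac{dm}{dt}=g_{\text{in}}(t)-m(t)=g_0+g_1N(t)-m(t),
\end{equation*}
which is exactly the $b$-equation of the Gaussian system \eqref{eq:system-1}, but now valid for an arbitrary solution of \eqref{eq:reduce-p}.

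The crucial step is a one-sided comparison between the firing rate and the mean. Since $p\geq 0$ and $\int_{\mathbb{R}}p\,dg=1$, splitting the defining integral of $N$ at $g=0$ gives
\begin{equation*}
V_F\,N(t)=\int_0^{\infty}g\,p(t,g)\,dg=m(t)-\int_{-\infty}^{0}g\,p(t,g)\,dg\geq m(t),
\end{equation*}
because $\int_{-\infty}^{0}g\,p\,dg\leq 0$. Combined with the trivial bound $N(t)\geq 0$, this yields $N(t)\geq\max\{0,\,m(t)/V_F\}$. Feeding this into the moment ODE and invoking the hypothesis $g_1/V_F\geq 1$, I argue by a case split on the sign of $m(t)$: if $m(t)\geq 0$ then
\begin{equation*}
\frac{dm}{dt}\geq g_0+\Big(\frac{g_1}{V_F}-1\Big)m(t)\geq g_0,
\end{equation*}
while if $m(t)<0$ then $\frac{dm}{dt}\geq g_0-m(t)>g_0$. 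In either regime $\frac{dm}{dt}\geq g_0>0$ holds for all $t$. Integrating gives $m(t)\geq m(0)+g_0\,t\to+\infty$, and hence $N(t)\geq m(t)/V_F\to+\infty$, which is the claim.

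The main obstacle I anticipate is not the algebra above but the rigorous justification of the moment identity: one must know a priori that $p(t,\cdot)$ has a finite first moment for every $t$, that $m(t)$ is differentiable, and that the boundary contributions $g\,p$ and $g\,\partial_g p$ indeed vanish as $g\to\pm\infty$. All of this can be secured directly from the representation in Lemma \ref{lemma-explicit-solu} with $k=0$, whose Gaussian tails guarantee finite moments and vanishing boundary terms at each finite time; in fact that representation yields the closed form $m(t)=e^{-t}m(0)+B(t)$, from which the moment ODE follows without any integration by parts. One should also record that the solution exists globally in time (the firing rate grows at most linearly, so there is no finite-time breakdown), which is what allows the linear lower bound to be propagated to $t\to+\infty$.
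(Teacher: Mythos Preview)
Your proof is correct and follows essentially the same approach as the paper's: both track the first moment $m(t)=\int_{\mathbb{R}} g\,p(t,g)\,dg$, derive the ODE $\dot m = g_0 + g_1 N(t) - m(t)$ by integrating the equation against $g$, use the comparison $V_F N(t)\geq m(t)$, and conclude $\dot m \geq g_0>0$ so that $m(t)\to+\infty$ and hence $N(t)\to+\infty$. The only cosmetic difference is that the paper (after normalizing $V_F=1$) writes $g_1 N - m \geq (g_1-1)N \geq 0$ in a single stroke, whereas you route through an unnecessary case split on the sign of $m$; since $V_F N\geq m$ holds regardless of the sign of $m$, the split can be dropped.
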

\begin{proof}[Proof of Proposition \ref{prop:g1>=1}]
    Without loss of generality we only need to consider the case $V_F=1$, otherwise we use $g_1/V_F$ and $a_1/V_F$ instead of $g_1$,$a_1$. When $V_F=1$ we have $g_1\geq 1$.
    
	Let's consider the first moment $M_1(t):=\int_{-\infty}^{+\infty}gp(t,g)dg$, which is less than the firing rate $N(t)$: $$M_1(t)\leq\int_{0}^{+\infty}gp(t,g)dg=N(t).$$ Then it suffices to show that $M_1(t)$ goes to infinity as time evolves. We compute the time derivative of $M_1(t)$ and integrate by parts,
	\begin{align}\notag
	\frac{d}{dt}M_1(t)&=\int_{-\infty}^{+\infty}g\p_tp(t,g)dg=\int_{-\infty}^{+\infty}g[\p_g((-g_{\text{in}}(t)+g)p+a(t)\p_gp)]dg\\\notag&=\int_{-\infty}^{+\infty}[(g_{\text{in}}(t)-g)p-a(t)\p_gp]dg\\&=g_{\text{in}}(t)-M_1(t)=g_0+g_1N(t)-M_1(t)\geq g_0+(g_1-1)N(t).\label{ddtM1}
	\end{align}In the last inequality we use $N(t)\geq M_1(t)$. When $g_1\geq 1$, we deduce from \eqref{ddtM1}
	\begin{equation*}
\frac{d}{dt}M_1(t)\geq g_0>0.
	\end{equation*}Thus $M_1(t)$ goes to infinity as time evolves.
\end{proof}
Proposition \ref{prop:g1>=1} implies the same result for the ODE \eqref{eq:system-1}, since the ODE represents a special solution of \eqref{eq:reduce-p}. For the full model \eqref{eq:nonlinear-toy} the same result also holds  (see Theorem \ref{thm:full-convergence}), but is not straightforward, due to the singular dependence of $N(t)$, i.e., $N(t)$ depends only on the flux at one point $v=V_F$. 

When $0<g_1/V_F<1$, the density function converges to the unique steady state. This case is more involved and we take a bottom-up approach. First we characterize the long time behavior for the ODE system \eqref{eq:system-1}.
\begin{theorem}\label{thm:ode}
		When $0<g_1/V_F<1$, the ODE system \eqref{eq:system-1} has a unique steady state $(b^*,c^*)$, which is a global attractor. In other words, for any initial data $b(0)\in\mathbb{R},c(0)>0$, the solution $(b(t),c(t))$ converges to $(b^*,c^*)$ as time $t$ goes to infinity.
\end{theorem}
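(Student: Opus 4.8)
The plan is to reduce the stationary problem to a one-dimensional self-consistency equation for the firing rate, show that equation has a unique root, and then exploit the fact that \eqref{eq:system-1} is a \emph{cooperative} planar system to upgrade this, together with a dissipativity bound, to global convergence via a monotone squeeze.

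\emph{Reduction and existence.} Any equilibrium $(b^*,c^*)$ of \eqref{eq:system-1} satisfies $b^*=g_0+g_1N^*$ and $c^*=a_0+a_1N^*$ with $N^*:=N(b^*,c^*)$, so it lies on the ray $N\mapsto(b(N),c(N)):=(g_0+g_1N,\ a_0+a_1N)$, $N\ge0$, and $N^*$ must solve the scalar equation $\mathcal F(N):=N(b(N),c(N))=N$. From \eqref{def-N-bc} a direct computation gives $V_F\,N(b,c)=b\,\Phi_0(b/\sqrt c)+\sqrt c\,\varphi_0(b/\sqrt c)$, where $\Phi_0,\varphi_0$ are the standard normal distribution and density; this map is smooth and nonnegative for $c>0$, and $N(b,c)\to b/V_F$ as $b/\sqrt c\to+\infty$. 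Hence $\mathcal F(N)\sim\tfrac{g_1}{V_F}N+\tfrac{g_0}{V_F}$, and since $0<g_1/V_F<1$ we get $\mathcal F(N)-N\to-\infty$, while $\mathcal F(0)=N(g_0,a_0)>0$. The intermediate value theorem then yields at least one fixed point $N^*>0$, to which I attach the equilibrium $(b(N^*),c(N^*))$.

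\emph{Uniqueness.} Differentiating $N(b,c)$ and integrating by parts on $(0,\infty)$ using $\partial_b\mathcal G=-\partial_g\mathcal G$ and $\partial_c\mathcal G=\tfrac12\partial_{gg}\mathcal G$ (already recorded in the proof of Proposition \ref{prop:Gaussian anstaz}) gives the clean identities
\begin{equation*}
\partial_b N(b,c)=\frac{1}{V_F}\int_0^{\infty}\mathcal G(g;b,c)\,dg>0,\qquad \partial_c N(b,c)=\frac{1}{2V_F}\,\mathcal G(0;b,c)>0 ,
\end{equation*}
so that $\mathcal F'(N)=g_1\partial_bN+a_1\partial_cN>0$. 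At a fixed point I would insert $g_1N^*=b^*-g_0$, $a_1N^*=c^*-a_0$ and the identity $V_FN^*=b^*\Phi_0(r^*)+\sqrt{c^*}\varphi_0(r^*)$ (with $r^*=b^*/\sqrt{c^*}$) into the inequality $\mathcal F'(N^*)<1$; after multiplying by $N^*>0$ it collapses to
\begin{equation*}
-g_0\Phi_0(r^*)-\tfrac{\sqrt{c^*}}{2}\varphi_0(r^*)-\tfrac{a_0}{2\sqrt{c^*}}\varphi_0(r^*)<0 ,
\end{equation*}
which holds because $g_0,a_0>0$. Thus every zero of $\mathcal F(N)-N$ is a strict down-crossing, so there is at most one, and with existence exactly one. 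Equivalently this says $\det J=2(1-\mathcal F'(N^*))>0$ and (since then $a_1\partial_cN<1-g_1\partial_bN$) $\operatorname{tr}J<0$ at the equilibrium, i.e. it is hyperbolic and locally asymptotically stable.

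\emph{Cooperativity, dissipativity, squeeze.} The off-diagonal Jacobian entries $\partial_c\dot b=g_1\partial_cN>0$ and $\partial_b\dot c=2a_1\partial_bN>0$ are positive throughout $\{c>0\}$, so \eqref{eq:system-1} is cooperative and its solution semiflow $S_t$ preserves the componentwise order. Using the elementary bound $N(b,c)\le\tfrac{1}{V_F}(b^++\sqrt c)$ together with $g_1/V_F<1$, I would produce a super-equilibrium $\overline x$ (with $\dot b,\dot c\le0$ there) of arbitrarily large size and a sub-equilibrium $\underline x$ (with $\dot b,\dot c\ge0$, $\underline b$ arbitrarily negative and $0<\underline c<a_0$); note also $\dot c\ge 2(a_0-c)$ keeps $c(t)>0$. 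Given data $(b_0,c_0)$, choose $\underline x\le(b_0,c_0)\le\overline x$; the comparison principle for cooperative systems gives $S_t(\underline x)\le S_t(b_0,c_0)\le S_t(\overline x)$ for all $t\ge0$, with $t\mapsto S_t(\overline x)$ nonincreasing and $t\mapsto S_t(\underline x)$ nondecreasing, each bounded and hence convergent to an equilibrium; by uniqueness both limits equal $(b^*,c^*)$, and the squeeze forces $S_t(b_0,c_0)\to(b^*,c^*)$.

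The hard part will be the global attraction, not the local analysis: a Lyapunov function is not apparent, and the Bendixson--Dulac test with unit weight fails for large $a_1$, since $\operatorname{tr}J=g_1\partial_bN+2a_1\partial_cN-3$ can be positive where $\mathcal G(0;b,c)$ is large, so periodic orbits cannot be excluded by a naive area argument. It is precisely the cooperative structure that makes the squeeze parameter-robust. The other delicate point is the uniqueness inequality above, which is not a soft monotonicity statement but hinges on the exact cancellation and on the positivity of the baseline parameters $g_0,a_0>0$.
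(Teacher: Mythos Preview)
Your proof is correct and, in the uniqueness step, cleaner than the paper's. Both approaches ultimately rest on the cooperative structure of \eqref{eq:system-1}, but the details differ in two places worth noting.

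\textbf{Uniqueness and linear stability.} The paper parametrises the stationary line by $c$, defines $F(c)=a_0+a_1N(\beta(c),c)-c$, and must show $F'(c^*)<0$ at any zero; this is the same inequality $g_1\partial_bN+a_1\partial_cN<1$ that you reach. The paper proves it via a separate technical lemma (Lemma~\ref{lemma-ode-technical}, the inequality $A_1+A_2\le1$ for certain Gaussian quantities), which takes roughly a page and a case split in $\lambda$. Your trick of multiplying the desired inequality by $N^*$ and substituting $g_1N^*=b^*-g_0$, $a_1N^*=c^*-a_0$, together with the identity $V_FN^*=b^*\Phi_0(r^*)+\sqrt{c^*}\,\varphi_0(r^*)$, collapses everything to a sum of three negative terms. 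This is a genuine simplification: it makes transparent that the strict inequality is driven exactly by $g_0>0$ and $a_0>0$, and it bypasses Lemma~\ref{lemma-ode-technical} entirely. Your deduction of $\operatorname{tr}J<0$ from $\det J>0$ and $g_1\partial_bN<1$ is also correct and slightly more streamlined than the paper's separate trace computation.

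\textbf{Global convergence.} The paper first proves a uniform bound via a quadratic Liapounov functional (Proposition~\ref{prop:ode-bound}), then invokes the eventual-monotonicity theorem for planar cooperative systems (Hirsch--Smith) to conclude that every bounded trajectory converges to an equilibrium. You instead construct explicit sub- and super-equilibria $\underline x\le(b_0,c_0)\le\overline x$ and use the order-preservation of the cooperative flow to squeeze; the monotone trajectories from $\underline x$ and $\overline x$ each converge to the unique equilibrium, forcing the same for the sandwiched orbit. Both routes are standard in monotone systems theory. Yours has the advantage of folding boundedness into the squeeze itself (the super-equilibrium provides the bound), whereas the paper's route separates these into two independent propositions. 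Your construction of $\overline x$ (using $N\le\tfrac{1}{V_F}(b^++\sqrt c)$ and $g_1/V_F<1$) and $\underline x$ (using $N\ge0$, taking $\underline b$ arbitrarily negative and $0<\underline c<\min(c(0),a_0)$) is correct; you should just make sure to state explicitly that the trajectories from $\underline x,\overline x$ remain in the open domain $\{c>0\}$, which follows from $\dot c\ge 2(a_0-c)$ as you note.
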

 Theorem \ref{thm:ode} lays the cornerstone for the analysis of the full simplified model. We naturally require $c(0)>0$, since $c(t)$ represents the variance of the Gaussian special solution of \eqref{eq:reduce-p}. To link with the case $g_1/V_F\geq 1$, we show that $b^*$ goes to infinity as $g_1/V_F$ approaches $1^-$ in Corollary \ref{cor:ode-steady}. This indicates that the steady state loses tightness towards $g=+\infty$ in such a limit, see Theorem \ref{thm:full-convergence} below.

Then, we show that a general solution of \eqref{eq:reduce-p} converges to a time-varying Gaussian and that it eventually converges to the unique steady state, which is the Gaussian corresponding to the steady state of the ODE \eqref{eq:system-1}.
\begin{theorem}\label{thm:v-homo-convergence}
When $0<g_1/V_F<1$, for initial data $p_{\init}(g)$ with finite order moment $\int_{\mathbb{R}}|g|p_{\init}(g)dg<\infty$, as time evolves the solution of the $v$-homogeneous problem \eqref{eq:reduce-p} converges to the unique steady state of \eqref{eq:reduce-p}
	\begin{equation}\label{steady-g}
	p^*(g):=\frac{1}{\sqrt{2\pi c^*}}\exp\left(-\frac{(g-b^*)^2}{2c^*}\right),
	\end{equation} in $L^1(\mathbb{R})$. Here $(b^*,c^*)$ is the unique steady state of \eqref{eq:system-1}.
\end{theorem}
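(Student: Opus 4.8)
The plan is to exploit the explicit $k=0$ solution formula of Lemma \ref{lemma-explicit-solu}, which for the $v$-homogeneous problem \eqref{eq:reduce-p} reduces to $p(t,g)=(p_{t,0}\ast G_{t,0})(g)$, where $p_{t,0}(y)=e^{t}p_{\init}(e^{t}y)$ is the initial datum contracted by the factor $e^{t}$ and $G_{t,0}$ is the Gaussian of mean $B(t)$ and variance $C(t)$ from \eqref{eq:BC} (the phase and decay factors in \eqref{eq:modified-Gaussian} drop out when $k=0$). Thus at every time the solution is an exactly Gaussian profile blurred by a mass that concentrates at the origin: since $\int|y|\,p_{t,0}(y)\,dy=e^{-t}\int|g|\,p_{\init}(g)\,dg$, the finite–first–moment hypothesis forces $p_{t,0}\to\delta_0$ in the quantitative $O(e^{-t})$ sense. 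This is the one place the moment assumption is used, and it will resurface twice below.

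Next I would track the pair $(B(t),C(t))$. Differentiating \eqref{eq:BC} yields the exact identities $\dot B=g_0+g_1N(t)-B$ and $\dot C=2a_0+2a_1N(t)-2C$, which are structurally the ODE system \eqref{eq:system-1} except that the driving term is the \emph{true} firing rate $N(t)$ of $p(t,\cdot)$ rather than the Gaussian firing rate $N(B(t),C(t))$ of \eqref{def-N-bc}. Comparing the two, I would write $N(t)-N(B(t),C(t))=\tfrac1{V_F}\int_0^{\infty} g\,\bigl[(p_{t,0}\ast G_{t,0})-G_{t,0}\bigr](g)\,dg$ and use that $y\mapsto\int_{-y}^{\infty}(u+y)G_{t,0}(u)\,du$ is $1$-Lipschitz to get $|N(t)-N(B(t),C(t))|\le \tfrac1{V_F}\int|y|\,p_{t,0}(y)\,dy=O(e^{-t})$. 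Hence $(B,C)$ solves the autonomous system \eqref{eq:system-1} perturbed by an exponentially small forcing.

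Before invoking convergence I would establish uniform-in-time bounds. The lower bound $C(t)\ge a_0(1-e^{-2t})$ is immediate from \eqref{eq:BC} and $a(s)\ge a_0$, keeping the Gaussian non-degenerate; for the upper bounds I would feed the elementary estimate $N(B,C)\le\tfrac1{V_F}(B_+ +\sqrt{C})$ into the $(B,C)$ equations and use $0<g_1/V_F<1$ — the subcriticality that lets the $-B$ term dominate — exactly as in the boundedness part of Theorem \ref{thm:ode}, the bounded forcing being harmless. With $(B,C)$ confined to a compact subset of $\mathbb{R}\times(0,\infty)$, Theorem \ref{thm:ode} identifies $(b^*,c^*)$ as the global attractor of the limiting system, and I would conclude $(B(t),C(t))\to(b^*,c^*)$ from the theory of asymptotically autonomous systems: the $\omega$-limit set of a bounded trajectory is internally chain recurrent for the limit flow and hence collapses to the (stable) globally attracting equilibrium.

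Finally I would assemble the $L^1$ convergence by the triangle inequality $\|p(t,\cdot)-p^*\|_{L^1}\le\|p_{t,0}\ast G_{t,0}-G_{t,0}\|_{L^1}+\|G_{t,0}-p^*\|_{L^1}$. The first term is bounded by $\|G_{t,0}'\|_{L^1}\int|y|\,p_{t,0}(y)\,dy=O(e^{-t})$, using $\|G_{t,0}'\|_{L^1}=\sqrt{2/(\pi C(t))}$ and the lower bound on $C(t)$; the second tends to $0$ because the $L^1$ distance between two Gaussians is continuous in their mean and positive variance and $(B(t),C(t))\to(b^*,c^*)$. The main obstacle is the self-referential coupling in the third step: $N(t)$ controls $(B,C)$, which in turn control $N(t)$, so the a priori bound and the convergence must be set up consistently, and the genuinely nontrivial point is upgrading mere global attractivity of \eqref{eq:system-1} to convergence of the forced trajectory — which requires either the asymptotically-autonomous framework or a strict Lyapunov function inherited from the proof of Theorem \ref{thm:ode}.
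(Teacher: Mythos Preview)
Your proposal is correct and follows essentially the same route as the paper: the $k=0$ formula $p=p_{t,0}\ast G_{t,0}$, the perturbed ODE for $(B,C)$ with $|N(t)-N(B,C)|\le V_F^{-1}\int|y|p_{t,0}(y)\,dy=O(e^{-t})$, uniform bounds on $(B,C)$ via the Lyapunov argument of Proposition~\ref{prop:ode-bound}, convergence $(B,C)\to(b^*,c^*)$ by the asymptotically autonomous framework, and the same two-term triangle inequality with the $\|\partial_g G_{t,0}\|_{L^1}$ bound. The paper states the Lipschitz step through Lemma~\ref{lemma:N-der} ($|\partial_b N|<1/V_F$) rather than your explicit integral formulation, but the content is identical.
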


With results on two reduced models, we manage to characterize the full model \eqref{eq:nonlinear-toy}.

\begin{theorem}\label{thm:full-convergence}
For initial data $p_{\init}(v,g)$ with $\int_{\mathbb{R}}\int_0^{V_F}|g|p_{\init}(v,g)dvdg<\infty$, we have the following result on the long time behavior of the simplified voltage-conductance  model \eqref{eq:nonlinear-toy}.
\begin{enumerate}
\item When $g_1/V_F\geq 1$, the firing rate $N(t)$ goes to infinity as time evolves, i.e.,
\begin{equation}
    N(t)\rightarrow+\infty,\quad \text{as }t\rightarrow+\infty.
\end{equation}
    \item 	When $0<g_1/V_F<1$, as time evolves the density function $p(t,v,g)$ converges to the unique steady state
	\begin{equation}\label{steady-vgPDE}
	p^*(v,g):=\frac{1}{V_F}\frac{1}{\sqrt{2\pi c^*}}\exp\left(-\frac{(g-b^*)^2}{2c^*}\right),
	\end{equation} in $L^1((0,V_F)\times\mathbb{R})$. Here $(b^*,c^*)$ is the unique steady state of the ODE system \eqref{eq:system-1}.
\item When $g_1/V_F\rightarrow 1^{-}$, we have $b^*\rightarrow+\infty$, therefore the steady state \eqref{steady-vgPDE} moves towards infinity and the system loses tightness.
\end{enumerate}

\end{theorem}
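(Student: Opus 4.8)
The plan is to transfer the behavior of the two reduced models to the full equation \eqref{eq:nonlinear-toy}, using Theorem \ref{thm:decay-v} to control the nonzero Fourier modes in $v$; the only genuine difficulty is the coupling through the singular firing rate $N(t)$. Claim~3 is immediate: it is the content of Corollary \ref{cor:ode-steady}, and since the full steady state \eqref{steady-vgPDE} is the Gaussian built from $(b^*,c^*)$, the divergence $b^*\to+\infty$ as $g_1/V_F\to 1^-$ is exactly the loss of tightness asserted.

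For Claim~2 I would start from the triangle inequality, writing the 2D steady state \eqref{steady-vgPDE} as $p^*(v,g)=\tfrac1{V_F}p^*(g)$ with $p^*$ the $g$-marginal Gaussian \eqref{steady-g},
\[
\big\|p(t,\cdot,\cdot)-p^*(\cdot,\cdot)\big\|_{L^1((0,V_F)\times\mathbb{R})}\le \Big\|p(t,\cdot,\cdot)-\tfrac{1}{V_F}p_0(t,\cdot)\Big\|_{L^1}+\big\|p_0(t,\cdot)-p^*(\cdot)\big\|_{L^1(\mathbb{R})},
\]
with $p_0$ the zeroth $v$-mode of \eqref{fourier-v}. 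Theorem \ref{thm:decay-v} sends the first term to zero exponentially, so it suffices to prove $p_0\to p^*$ in $L^1(\mathbb{R})$. Integrating \eqref{eq:nonlinear-toy} in $v$ and using \eqref{nonlinear-toy-bc} shows $p_0$ solves the $v$-homogeneous equation \eqref{eq:reduce-p}, but driven by the full rate $N(t)=N_0(t)+\tilde E(t)$, where $N_0=\tfrac{1}{V_F}\int_0^\infty g\,p_0\,dg$ is generated by $p_0$ alone and $\tilde E(t)=\tfrac{1}{V_F}\sum_{k\ne 0}\int_0^\infty g\,p_k(t,g)\,dg$ collects the nonzero modes. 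If $\tilde E\to 0$, then $p_0$ is an asymptotically vanishing perturbation of the closed $v$-homogeneous dynamics, for which Theorem \ref{thm:v-homo-convergence} (via Theorem \ref{thm:ode}) gives convergence to $p^*$. To obtain $\tilde E\to 0$ one needs a first-moment, not merely an $L^1$, bound on each mode: the explicit formulas \eqref{eq:modified-Gaussian} and \eqref{def-barG} together with Young's inequality yield $\int_{\mathbb R}|g|\,|p_k|\,dg\lesssim (e^{-t}+|B(t)|+\sqrt{C(t)})\,e^{-k^2(\frac{2\pi}{V_F})^2D(t)}$. In the regime $0<g_1/V_F<1$ an a priori bound on $N$ keeps $B,C$ bounded through \eqref{eq:BC}, and, summing over $k\ne 0$, the hypocoercive decay $D(t)\gtrsim a_0 t$ of Lemma \ref{lemma-D(t)} forces $\tilde E(t)\to 0$.

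Claim~1 uses the full first moment $M_1(t)=\int_{\mathbb R}g\,p_0(t,g)\,dg$. Repeating the integration by parts of Proposition \ref{prop:g1>=1} on \eqref{eq:reduce-p} gives $\tfrac{d}{dt}M_1=g_0+g_1N(t)-M_1$, and since $p\ge 0$ the positive-moment definition \eqref{eq:nonliner-toy-fire} obeys $N(t)= N_0(t)+\tilde E(t)\ge \tfrac{1}{V_F}M_1(t)+\tilde E(t)$, whence
\[
\frac{d}{dt}M_1\ge g_0+\Big(\frac{g_1}{V_F}-1\Big)M_1+g_1\tilde E(t).
\]
When $g_1/V_F\ge 1$ the middle coefficient is nonnegative, and $M_1(t)\ge 0$ for large $t$ because $\tfrac{d}{dt}M_1\ge g_0-M_1$ already forces $\liminf M_1\ge g_0$. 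I would then close by a boundedness dichotomy: were $N$ bounded, then $B,C$ would be bounded by \eqref{eq:BC}, the mode estimate would give $\tilde E\to 0$, and the inequality above would force $\tfrac{d}{dt}M_1\ge g_0/2$ for large $t$, so that $M_1\to\infty$ and $N\to\infty$, contradicting boundedness. Hence $N$ must diverge; upgrading this to the full limit $N(t)\to+\infty$ requires in addition a bound on the growth rate of $N$ ensuring that $B,C$ cannot outrun the factor $e^{-(\frac{2\pi}{V_F})^2D(t)}$, which I would secure by a bootstrap feeding the explicit upper bound for $N$ back into \eqref{eq:BC}.

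The main obstacle is precisely the singular firing-rate coupling announced in the introduction. Unlike the self-contained $v$-homogeneous model, the zeroth mode of \eqref{eq:nonlinear-toy} is driven by $N(t)$, a positive moment evaluated at the single slice $v=V_F$, which therefore entangles all Fourier modes. Estimating its nonzero-mode part $\tilde E(t)$ demands first-moment control of the decaying modes, and in the divergent regime this control must beat the growth of the Gaussian parameters $B(t),C(t)$; the $N$-independent lower bound $D(t)\ge a_0\big(t-2\frac{e^t-1}{e^t+1}\big)$ of Lemma \ref{lemma-D(t)} is exactly the hypocoercive ingredient that makes this competition favourable, and establishing the requisite a priori and bootstrap bounds on $N$ is where the real work lies.
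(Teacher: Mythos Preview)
Your outline captures the right ingredients---the first-moment estimate on each nonzero mode and the $N$-independent hypocoercive lower bound on $D(t)$ from Lemma \ref{lemma-D(t)}---but two steps do not close as written.

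\textbf{Case 2.} The sentence ``an a priori bound on $N$ keeps $B,C$ bounded'' is circular: no such a priori bound has been established, and indeed the whole difficulty is that $N$ depends on all Fourier modes through the slice $v=V_F$. Your mode estimate gives $|\tilde E(t)|\lesssim (e^{-t}+|B(t)|+\sqrt{C(t)})\,e^{-(\frac{2\pi}{V_F})^2D(t)}$, but $B,C$ are themselves integrals of $N$ via \eqref{eq:BC}, so you cannot conclude $\tilde E\to 0$ without first controlling $(B,C)$. The paper breaks this loop not by bounding $N$ but by bounding $(B,C)$ directly: writing $\eps(t)=N(t)-N(B(t),C(t))$, Proposition \ref{prop:eps-v} gives $|\eps(t)|\le Ce^{-d^*t}\bigl(1+N(B,C)\bigr)\le Ce^{-d^*t}(1+|B|+\sqrt{C})$, i.e.\ the error is expressed in terms of $(B,C)$, \emph{not} $N$. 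Feeding this into the ODE \eqref{dynamics-BC-vg} for $(B,C)$ and repeating the Lyapunov calculation of Proposition \ref{prop:ode-bound}, the $e^{-d^*t}$ factor absorbs the extra $|B|+\sqrt{C}$ and yields uniform boundedness of $(B,C)$; only then does $\eps(t)\to 0$ follow, and the asymptotically autonomous argument (Theorem \ref{thm:auto}) concludes. Your route via $\tilde E=N-N_0$ is essentially equivalent (since $|N_0-N(B,C)|\le Ce^{-t}$), but you must replace the unproved a priori bound by this self-contained Lyapunov argument on $(B,C)$. Also note that you cannot invoke Theorem \ref{thm:v-homo-convergence} as a black box for the perturbed $p_0$, since that theorem is stated for the closed system; you have to rerun its proof with the additional forcing, which amounts to exactly the same $(B,C)$ argument.

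\textbf{Case 1.} The dichotomy (``were $N$ bounded \ldots\ contradiction'') only yields $\limsup_{t\to\infty}N(t)=+\infty$, not the full limit. The proposed bootstrap is too vague: you must rule out oscillations in which $N$ visits bounded values infinitely often. The paper avoids this by working with $B(t)$ (equivalently your $M_1$, since $M_1-B=O(e^{-t})$) and showing it is eventually monotone increasing to $+\infty$. From \eqref{dynamics-BC-vg} and Proposition \ref{prop:eps-v}, with $V_F=1$,
\[
\frac{dB}{dt}\ge g_0+g_1N(B,C)-B-Ce^{-d^*t}\bigl(1+N(B,C)\bigr),
\]
and since $N(B,C)\ge B$ by Lemma \ref{lemma-bound-N}, for $g_1>1$ one gets $\frac{dB}{dt}\ge \frac{g_0}{2}+\frac{g_1-1}{2}B$ for large $t$; the borderline $g_1=1$ needs a separate $\varepsilon$--$\delta$ argument. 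Once $B\to\infty$, the same proposition gives $N(t)\ge(1-Ce^{-d^*t})N(B,C)-Ce^{-d^*t}\ge(1-Ce^{-d^*t})B-Ce^{-d^*t}\to\infty$. This is the bootstrap you allude to, but it hinges on the error being controlled by $N(B,C)$ rather than by $N(t)$ itself.
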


The requirement of a finite first moment of the initial data in Theorem \ref{thm:v-homo-convergence}, \ref{thm:full-convergence} is natural, since we need to define the firing rate $N(t)$.
We remark that part 2 of Theorem \ref{thm:full-convergence} is not a direct consequence of Theorem \ref{thm:decay-v} and \ref{thm:v-homo-convergence}. Roughly speaking, the decay of non-zero mode from Theorem \ref{thm:decay-v} is in the usual $L^1$ norm, which is not strong enough to control the contributions to the firing rate from these non-zero modes. Nevertheless, part 3 of Theorem \ref{thm:full-convergence} is only related with the ODE \eqref{eq:system-1} and its proof is given in Corollary \ref{cor:ode-steady}, Section \ref{sc:ode-bound-steady}.

Proofs of Theorem \ref{thm:ode}, \ref{thm:v-homo-convergence} and \ref{thm:full-convergence} are presented in Section \ref{sc:3longtime}. In theory, we only need to prove Theorem \ref{thm:full-convergence} for \eqref{eq:nonlinear-toy}, since the two reduced models are special cases of it. However, the investigations on reduced models are crucial for the analysis on the full model \eqref{eq:nonlinear-toy}. Roughly speaking, such a global characterization is obtained by revealing that the long time behavior of the PDE \eqref{eq:nonlinear-toy} is indeed dominated by the ODE system \eqref{eq:system-1}, whose long time asymptotics can be fully characterized in Theorem \ref{thm:ode}. 

With the clear characterization in Theorem \ref{thm:full-convergence}, we confirm that there is no periodic solution in our simplified voltage-conductance model \eqref{eq:nonlinear-toy}, in contrast to the numerical evidence for the original model \cite{caceres2011numerical}. Such a result, though maybe disappointing at a first glance, clearly indicates that there is something significant in the two simplifications. Since our second simplification, extending the domain of $g$, is a technical one, the essential change is in the simplification of the velocity field. Therefore, our result reveals that the non-separable velocity field $J_v(v,g)=-g_Lv+g(V_E-v)$ not only brings difficulties to analysis, but also makes an essential contribution to the dynamics. Its scientific implication is that the leaky conductance $g_L>0$ plays a crucial role in the mechanism of self-sustained oscillations. 

\subsection{Extension to more general velocity fields}\label{sc:gen-vel}
Now we explain how to generalize the results for \eqref{eq:nonlinear-toy} to the more general velocity field $gf(v)$ with a positive and continuous $f(v)$ on $[0,V_F]$, by a change of variable.

Precisely, we consider the following model
	\begin{equation}\label{eq:nonlinear-toy-gen}
	\p_t p+g\p_v(f(v) p)+\p_g((g_{\text{in}}(t)-g)p-a(t)\p_gp))=0,\quad v\in(0,V_F),\ g\in\mathbb{R},\ t>0,
	\end{equation}with a boundary condition on flux in $v$, and we still define firing rate as an integration of flux at $V_F$ over $g>0$,
	\begin{equation}\label{eq:nonlinear-toy-gen-bc}
	f(V_F)p(t,V_F,g)-f(0)p(t,0,g)=0,\quad g\in\mathbb{R},t>0,\qquad N(t):=\int_{0}^{\infty}gf(V_F)p(t,V_F,g)dg.
	\end{equation}
		Actually, we can reduce \eqref{eq:nonlinear-toy-gen} to \eqref{eq:nonlinear-toy} by a change of variable.
	Multiplying \eqref{eq:nonlinear-toy-gen} by $f(v)$, we get
	\begin{equation*}
	    	\p_t (f(v)p)+gf(v)\p_v(f(v) p)+\p_g((g_{in}(t)-g)(f(v)p)-a(t)\p_g(f(v)p))=0,\quad v\in(0,V_F),\ g\in\mathbb{R},\ t>0,
	\end{equation*}
	Let $u(v):=\int_0^v\frac{1}{f(v')}dv'$ and $\tilde{p}(t,u(v),g):=f(v)p(t,v,g)$, we have
		\begin{equation}
	    	\p_t \tilde{p}+g\p_u\tilde{p}+\p_g((g_{in}(t)-g)\tilde{p}-a(t)\p_g\tilde{p})=0,\quad u\in(0,U_F),\ g\in\mathbb{R},\ t>0,
	\end{equation}
	where $U_F:=\int_0^{V_F}\frac{1}{f(v)}dv$. And in new variables \eqref{eq:nonlinear-toy-gen-bc} becomes
		\begin{equation}
\tilde{p}(t,U_F,g)-\tilde{p}(t,0,g)=0,\quad g\in\mathbb{R},t>0,\qquad N(t)=\int_{0}^{\infty}g\tilde{p}(t,U_F,g)dg.
	\end{equation}
	Moreover $\tilde{p}(t,u,g)$ is still a probability density by the following calculation
	\begin{equation*}
	    \int_{\mathbb{R}}dg\int_0^{U_F}du\tilde{p}(t,u,g)=\int_{\mathbb{R}}dg\int_0^{V_F}dv\frac{1}{f(v)}\tilde{p}(t,u(v),g)=\int_{\mathbb{R}}dg\int_0^{V_F}dvp(t,v,g)=1.
	\end{equation*}

	Therefore, all results for the system \eqref{eq:nonlinear-toy} holds for the more general model \eqref{eq:nonlinear-toy-gen}. We remark that if we choose 
	\begin{equation*}
	f(v)=V_E-v,\quad \text{for some }V_E>V_F,
	\end{equation*} then this can be seen as a special case of the original model \eqref{eq:original} with $g_L=0$.
	
	\section{Long time behavior: Proofs of main results }\label{sc:3longtime}
	In this section, we characterize the long time behavior of models \eqref{eq:system-1}, \eqref{eq:reduce-p} and \eqref{eq:nonlinear-toy} by proving Theorem \ref{thm:ode}, \ref{thm:v-homo-convergence} and \ref{thm:full-convergence}. The three models satisfy the same dichotomy: when $g_1/V_F\geq 1$, the firing rate $N(t)$ diverges to infinity as time goes to infinity, while when $g_1/V_F<1$, the solution converges to the unique steady state. The two scenarios are connected in Corollary \ref{cor:ode-steady} which shows that the steady state loses tightness as $g_1/V_F$ approaches $1^{-}$.
	
	In Section \ref{sc:ode3.1} we analyze the ODE \eqref{eq:system-1}, which lays the cornerstone for the analysis of the PDEs \eqref{eq:reduce-p} and \eqref{eq:nonlinear-toy} in Section \ref{sc:3.2} and Section \ref{sc:longtime3.3}, respectively. Very loosely speaking, the proof strategy for the PDEs is to reduce the problem to an non-autonomous ODE, which can be viewed as a perturbation of \eqref{eq:system-1}. Of course, such reductions need investigations on the solution structure and careful estimates.

	\subsection{Long time behavior of the Gaussian solutions: Proof of Theorem \ref{thm:ode}}\label{sc:ode3.1}
	In this section, we analyze the ODE system \eqref{eq:system-1}
	\begin{equation*}
	\begin{aligned}
	  	\frac{db(t)}{dt}&=g_0+{g_1}N(b(t),c(t))-b(t),\\
 	\frac{dc(t)}{dt}&=2a_0+2{a_1}N(b(t),c(t))-2c(t),
	\end{aligned}
	\end{equation*} where $N(b,c)$ is given by \eqref{def-N-bc}
 	\begin{equation}
 	N(b,c)=\frac{1}{V_F}\int_{0}^{+\infty}g\frac{1}{\sqrt{2\pi c}}\exp\left(-\frac{(g-b)^2}{2c}\right)dg\geq 0,\quad b\in\mathbb{R},c>0.
 	\end{equation}
 	
	 First we discuss elementary properties of the nonlinear function $N(b,c)$ in Section \ref{sc:ode-basic}. Then in Section \ref{sc:ode-bound-steady}, we give basic characterization of the dynamics \eqref{eq:system-1} such as the boundedness, the existence of a unique steady state. With these preparations, in Section \ref{sc:ode-convergence} we complete the proof of Theorem \ref{thm:ode}, showing the solution of such a system will converge to the unique steady state when $0<g_1/V_F<1$. We recall from Proposition \ref{prop:g1>=1} otherwise when $g_1/V_F\geq 1$ the firing rate will diverge to infinity as time evolves. The two regimes are linked in Corollary \ref{cor:ode-steady}, which shows that as $g_1/V_F$ approaches $1^{-}$, both components of the steady state $b^*$ and $c^*$ diverge to infinity, and that there is no steady state when $g_1/V_F\geq 1$.

	 Before we start, we recall that $b(t),c(t)$ represents the mean and variance of a Gaussian special solution of \eqref{eq:reduce-p}. Therefore we shall only consider the case $c>0$, since $c$ denotes the variance, while we allow the mean $b\in\mathbb{R}$. In fact, the function $N(b,c)$ \eqref{def-N-bc} is not defined for $c\leq 0$. The following lemma that shows $c(t)>0$ for $t>0$ can be ensured by that initially $c(0)>0$. 
	\begin{lemma}\label{lemma-ODE-initial}
	For the ODE system \eqref{eq:system-1}, if the initial data $c(0)>0$ then $c(t)>0$ for all $t\geq0$.
	\end{lemma}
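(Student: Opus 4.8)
The plan is to extract a closed differential inequality for the variance $c(t)$ alone and then dominate it from below by an explicit barrier. The starting observation is that the firing rate is always nonnegative, $N(b,c)\geq 0$ as recorded in \eqref{def-N-bc}, and that the coefficients satisfy $a_0,a_1>0$. Hence the second equation of \eqref{eq:system-1} immediately yields the one-sided bound
\begin{equation}
\frac{dc}{dt}=2a_0+2a_1N(b(t),c(t))-2c(t)\geq 2a_0-2c(t),
\end{equation}
in which the potentially complicated dependence on $b$ has been discarded.

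First I would introduce the barrier $\underline{c}(t)$ solving the linear ODE $\frac{d\underline{c}}{dt}=2a_0-2\underline{c}$ with $\underline{c}(0)=c(0)$, whose explicit solution is $\underline{c}(t)=a_0+(c(0)-a_0)e^{-2t}$. A direct inspection shows $\underline{c}(t)\geq\min\{a_0,c(0)\}>0$ for all $t\geq 0$. Next I would run a standard comparison: setting $w:=c-\underline{c}$, one has $w(0)=0$ and $\frac{dw}{dt}\geq -2w$, so $\frac{d}{dt}(e^{2t}w)\geq 0$ and therefore $w(t)\geq 0$. This gives the quantitative lower bound $c(t)\geq\underline{c}(t)\geq\min\{a_0,c(0)\}>0$, which is strictly stronger than the mere positivity claimed.

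The one genuine subtlety, and the main obstacle, is that the argument is \emph{a priori} circular: the nonlinearity $N(b,c)$ is only defined for $c>0$, so the differential inequality above is meaningful only while $c>0$. I would resolve this by working on the maximal interval $[0,T^*)$ on which the solution of \eqref{eq:system-1} exists with $c(t)>0$ (such an interval exists by the local theory, since the vector field is smooth on $\{c>0\}$). On this interval the comparison argument applies verbatim and produces the uniform bound $c(t)\geq\min\{a_0,c(0)\}>0$; in particular $c$ is bounded away from zero, so it cannot approach the boundary $\{c=0\}$ of the domain, and the positivity persists up to $T^*$ and, by continuation, for all $t\geq 0$ for which the solution is defined.

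An alternative, which I would note but not develop, avoids the explicit barrier altogether: if $c$ first reached $0$ at some time $t_0$, then for $t$ slightly below $t_0$ one would have $c(t)<a_0$, whence $\frac{dc}{dt}\geq 2a_0-2c>0$, forcing $c$ to be increasing near $t_0$ --- contradicting that $c$ decreases to $0$ there. Either route delivers Lemma \ref{lemma-ODE-initial}, but I would favor the barrier version since the explicit lower bound $\min\{a_0,c(0)\}$ is reusable in the subsequent boundedness and convergence analysis.
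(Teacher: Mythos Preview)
Your argument is correct and follows the same route as the paper: derive the differential inequality $\frac{dc}{dt}\geq 2a_0-2c$ from $N\geq 0$ and invoke comparison. The paper's proof is simply the one-line version of what you wrote; your explicit barrier $\underline{c}(t)=a_0+(c(0)-a_0)e^{-2t}$ and the care about the domain of definition of $N$ are additional detail, not a different method.
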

	\begin{proof}[Proof of Lemma \ref{lemma-ODE-initial}]
	Since $N(b,c)\geq 0$, from \eqref{eq:system-1} we derive $\frac{dc(t)}{dt}\geq 2a_0-2c(t)$. Then the result follows from the standard comparison principle.
	\end{proof}
	In the following discussion we are in the scenario that $b\in\mathbb{R},c>0$.

	\subsubsection{Basics properties for the nonlinearity $N(b,c)$}\label{sc:ode-basic}
In this section we analyze the function $N(b,c)$ in \eqref{def-N-bc}. It is helpful to rewrite $N(b,c)$ in terms of standard normal variables as follows,
	\begin{equation}\label{compute-N}
	    V_FN(b,c)=\int_{\mathbb{R}}g_+\frac{1}{\sqrt{2\pi c}}e^{-\frac{(g-b)^2}{2c}}dg=\int_{\mathbb{R}}(g+b)_+\frac{1}{\sqrt{2\pi c}}e^{-\frac{g^2}{2c}}=\int_{\mathbb{R}}(\sqrt{c}g+b)_+\frac{1}{\sqrt{2\pi}}e^{-\frac{g^2}{2}}dg.
	\end{equation}
	Here as usual, for $x\in\mathbb{R}$, we denote its positive part as $x_+=\max(x,0)$. 
	
	First, we summarize basic facts on the derivatives of $N$.
	\begin{lemma}\label{lemma:N-der}
	For $N(b,c)$ defined in \eqref{def-N-bc}, its derivatives are given by
	\begin{equation}\label{derivative-N}
	    V_F\frac{\p N}{\p b}=\int_{-\frac{b}{\sqrt{c}}}^{+\infty}\frac{1}{\sqrt{2\pi}}e^{-\frac{g^2}{2}}dg,\quad V_F\frac{\p N}{\p c}=\frac{1}{2\sqrt{c}}\frac{1}{\sqrt{2\pi}}e^{-\frac{b^2}{2c}}.
	\end{equation}
	And therefore we have the following bounds
	\begin{equation}\label{eq:mono}
\frac{\p N}{\p b}\in(0,\frac{1}{V_F}),\quad  \frac{\p N}{\p c}>0.
\end{equation}
	\end{lemma}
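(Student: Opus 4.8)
The plan is to work from the reformulated expression \eqref{compute-N}, namely
\[
V_F N(b,c)=\int_{\mathbb{R}}(\sqrt{c}\,g+b)_+\,\frac{1}{\sqrt{2\pi}}e^{-g^2/2}\,dg,
\]
in which the Gaussian weight no longer depends on the parameters $(b,c)$; all of the dependence has been pushed into the integrand $(\sqrt{c}\,g+b)_+$. This is the decisive step of the argument: differentiating the original form in \eqref{def-N-bc} is awkward because the Gaussian itself carries $b$ and $c$, whereas from \eqref{compute-N} one only has to differentiate the factor $(\sqrt{c}\,g+b)_+$ and keep the parameter-free weight.

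For the $b$-derivative I would differentiate under the integral sign. Since $x\mapsto x_+$ is Lipschitz with $\frac{d}{dx}x_+=\mathbf{1}_{\{x>0\}}$ away from the single point $x=0$, one has $\partial_b(\sqrt{c}\,g+b)_+=\mathbf{1}_{\{\sqrt{c}\,g+b>0\}}=\mathbf{1}_{\{g>-b/\sqrt{c}\}}$ for a.e.\ $g$, and this is dominated by the integrable function $\frac{1}{\sqrt{2\pi}}e^{-g^2/2}$. Hence
\[
V_F\frac{\partial N}{\partial b}=\int_{-b/\sqrt{c}}^{+\infty}\frac{1}{\sqrt{2\pi}}e^{-g^2/2}\,dg,
\]
which is exactly the claimed formula. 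Being a standard-normal tail probability it lies strictly in $(0,1)$, so $\partial N/\partial b\in(0,1/V_F)$. For the $c$-derivative the same differentiation-under-the-integral applies, now with $\partial_c(\sqrt{c}\,g+b)_+=\frac{g}{2\sqrt{c}}\mathbf{1}_{\{g>-b/\sqrt{c}\}}$, the extra factor $g$ still being controlled by the Gaussian tail. This gives
\[
V_F\frac{\partial N}{\partial c}=\frac{1}{2\sqrt{c}}\int_{-b/\sqrt{c}}^{+\infty}g\,\frac{1}{\sqrt{2\pi}}e^{-g^2/2}\,dg,
\]
and the remaining integral is evaluated in closed form using the antiderivative $-e^{-g^2/2}$ of $g\,e^{-g^2/2}$, producing $\frac{1}{\sqrt{2\pi}}e^{-b^2/(2c)}$. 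This yields the stated expression for $\partial N/\partial c$, which is manifestly positive since $c>0$.

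The only point requiring a little care — and the one I would flag as the main, though mild, obstacle — is the legitimacy of differentiating under the integral, because $(\sqrt{c}\,g+b)_+$ fails to be differentiable at $g=-b/\sqrt{c}$. This is harmless: the integrand is Lipschitz in $(b,c)$ locally uniformly in $g$, its partial derivatives exist for every $g$ outside a single point (a Lebesgue-null set), and they admit the integrable Gaussian dominating bounds noted above, so the dominated-convergence form of Leibniz's rule applies on any compact neighbourhood of $(b,c)$ with $c>0$. Everything else is a routine Gaussian computation, and the two sign/range bounds in \eqref{eq:mono} then follow by inspection of the explicit formulas in \eqref{derivative-N}.
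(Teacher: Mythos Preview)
Your proof is correct and follows essentially the same approach as the paper: differentiate under the integral sign in the reformulated expression \eqref{compute-N}, using that $\partial_x(x)_+=\mathbf{1}_{\{x>0\}}$ almost everywhere, and then evaluate the resulting Gaussian integrals. The only cosmetic difference is that for $\partial N/\partial b$ the paper uses the intermediate form $V_FN=\int_{\mathbb{R}}(g+b)_+\frac{1}{\sqrt{2\pi c}}e^{-g^2/(2c)}\,dg$ rather than the fully standardized one, but this leads to the same computation after a change of variable; your added remarks on the legitimacy of differentiating under the integral are a welcome bit of extra care that the paper leaves implicit.
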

	\begin{proof}[Proof of Lemma \ref{lemma:N-der}]
From \eqref{compute-N} we have $V_FN(b,c)=\int_{\mathbb{R}}(g+b)_+\frac{1}{\sqrt{2\pi c}}e^{-\frac{g^2}{2c}}dg$, then we get 
$$
V_F\frac{\p N}{\p b}=\int_{\mathbb{R}}\frac{\p (g+b)_+}{\p b}\frac{1}{\sqrt{2\pi c}}e^{-\frac{g^2}{2c}}dg=\int_{\mathbb{R}}\mathbb{I}_{g+b\geq0}\frac{1}{\sqrt{2\pi c}}e^{-\frac{g^2}{2c}}=\int_{-\frac{b}{\sqrt{c}}}^{+\infty}\frac{1}{\sqrt{2\pi}}e^{-\frac{g^2}{2}}dg,
$$ since $\frac{\p (g+b)_+}{\p b}=\mathbb{I}_{g+b\geq0}$
almost everywhere.

Similarly from $V_FN(b,c)=\int_{\mathbb{R}}(\sqrt{c}g+b)_+\frac{1}{\sqrt{2\pi}}e^{-\frac{g^2}{2}}dg$ in \eqref{compute-N}, we calculate
$$
V_F\frac{\p N}{\p c}=\int_{\mathbb{R}}\frac{\p (\sqrt{c}g+b)_+}{\p c}\frac{1}{\sqrt{2\pi}}e^{-\frac{g^2}{2}}dg=\frac{1}{2\sqrt{c}}\int_{\mathbb{R}}g\mathbb{I}_{\sqrt{c}g+b\geq0}\frac{1}{\sqrt{2\pi}}e^{-\frac{g^2}{2}}dg=\frac{1}{2\sqrt{c}}(\frac{1}{\sqrt{2\pi}}e^{-\frac{b^2}{2c}}),
$$ since $\frac{\p (\sqrt{c}g+b)_+}{\p c}=\frac{1}{2\sqrt{c}}g\mathbb{I}_{\sqrt{c}g+b\geq0}$  almost everywhere. Now \eqref{derivative-N} is proved and the bounds \eqref{eq:mono} follows directly.
	\end{proof}

Next, we give estimates on $N(b,c)$ itself.
\begin{lemma}\label{lemma-bound-N}
For $N(b,c)$ defined in \eqref{def-N-bc}, the following estimate holds
\begin{equation}\label{eq:key-boundness}
0\leq b_+\leq V_FN(b,c)\leq b_+ +\sqrt{c},\quad \forall b\in\mathbb{R},c>0.
\end{equation}
Here $b_+=\max(0,b)$ as stated before.
\end{lemma}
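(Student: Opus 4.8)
The plan is to work entirely from the standardized representation already recorded in \eqref{compute-N}, namely
\begin{equation*}
V_FN(b,c)=\int_{\mathbb{R}}(\sqrt{c}\,g+b)_+\,\frac{1}{\sqrt{2\pi}}e^{-\frac{g^2}{2}}\,dg,
\end{equation*}
and to deduce both bounds from elementary pointwise inequalities for the positive-part function $x_+=\max(x,0)$, integrated against the standard normal density. The two-sided estimate then follows by splitting the problem into a lower bound and an upper bound, each established by a one-line pointwise comparison. I expect no genuine obstacle here; the only substantive ingredient is the explicit value of the half-normal first moment $\int_0^{\infty}g\,\tfrac{1}{\sqrt{2\pi}}e^{-g^2/2}\,dg=\tfrac{1}{\sqrt{2\pi}}$, and the small observation that $\tfrac{1}{\sqrt{2\pi}}<1$ is exactly what makes the upper bound come out clean.

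For the lower bound, I would use the two pointwise facts $x_+\ge x$ and $x_+\ge 0$. Testing the first against the standard Gaussian gives, since the standard normal has mean zero,
\begin{equation*}
V_FN(b,c)\ge\int_{\mathbb{R}}(\sqrt{c}\,g+b)\,\frac{1}{\sqrt{2\pi}}e^{-\frac{g^2}{2}}\,dg=b,
\end{equation*}
while nonnegativity of the integrand gives $V_FN(b,c)\ge 0$ directly. Combining these two, $V_FN(b,c)\ge\max(0,b)=b_+$, which is the desired left inequality (the estimate $0\le b_+$ being trivial).

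For the upper bound, the key is the subadditivity of the positive part, $(x+y)_+\le x_+ + y_+$, applied with $x=\sqrt{c}\,g$ and $y=b$, so that pointwise $(\sqrt{c}\,g+b)_+\le\sqrt{c}\,(g)_+ + b_+$. Integrating against the standard normal density, the constant term $b_+$ contributes $b_+$ (the density integrates to $1$), and the remaining term contributes $\sqrt{c}\int_{\mathbb{R}}(g)_+\tfrac{1}{\sqrt{2\pi}}e^{-g^2/2}\,dg=\tfrac{\sqrt{c}}{\sqrt{2\pi}}$. Hence
\begin{equation*}
V_FN(b,c)\le b_+ +\frac{\sqrt{c}}{\sqrt{2\pi}}\le b_+ +\sqrt{c},
\end{equation*}
using $\tfrac{1}{\sqrt{2\pi}}<1$ in the last step. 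This closes the chain of inequalities and completes the proof.
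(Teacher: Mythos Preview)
Your proof is correct and follows essentially the same approach as the paper: both arguments use the subadditivity $(x+y)_+\le x_++y_+$ for the upper bound and the elementary facts $x_+\ge x$, $x_+\ge 0$ for the lower bound, integrated against the Gaussian. The only cosmetic difference is that you work throughout with the fully standardized representation from \eqref{compute-N}, whereas the paper mixes the original integral (for the lower bound) with the centered form (for the upper bound); your explicit constant $\tfrac{1}{\sqrt{2\pi}}$ is the same one the paper silently bounds by $1$.
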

\begin{proof}[Proof of Lemma \ref{lemma-bound-N}] By definition \eqref{def-N-bc}, $V_FN(b,c)=\int_{0}^{+\infty}g\frac{1}{\sqrt{2\pi c}}e^{-\frac{(g-b)^2}{2c}}dg\geq0$ and $$V_FN(b,c)-b=-\int_{-\infty}^{0}g\frac{1}{\sqrt{2\pi c}}e^{-\frac{(g-b)^2}{2c}}dg\geq0.$$ Therefore $V_FN\geq \max(b,0)= b_+\geq 0$.

For the upper bound we deduce from \eqref{compute-N}, since $(b+g)_+\leq b_++g_+$,
\begin{equation*}
    V_FN=\int_{\mathbb{R}}(b+g)_+\frac{1}{\sqrt{2\pi c}}e^{-\frac{g^2}{2c}}dg\leq \int_{\mathbb{R}}b_+\frac{1}{\sqrt{2\pi c}}e^{-\frac{g^2}{2c}}dg+\int_{\mathbb{R}}g_+\frac{1}{\sqrt{2\pi c}}e^{-\frac{g^2}{2c}}dg \leq b_++\sqrt{c}.
\end{equation*}

\end{proof}

	Lemme \ref{lemma:N-der} is crucial for the long time behavior of \eqref{eq:system-1} because it implies that \eqref{eq:system-1} is a cooperative system \cite{hirsch2006monotone} (see Section \ref{sc:longtime3.3}).
	
    Lemma \ref{lemma-bound-N} is useful for controlling $N$ throughout Section \ref{sc:3longtime}.

	\subsubsection{Steady state: uniqueness and linear stability}\label{sc:ode-bound-steady}

	In this section, we first prove the boundedness of solutions in Proposition \ref{prop:ode-bound}. Then we prove that there exists a unique steady state which is linearly stable in Proposition \ref{prop:ode-steady}. We focus on the case $0<{g_1}/{V_F}<1$ and study the behavior when $g_1/V_F$ approaches $1^{-}$ to link with the case $g_1/V_F\geq 1$ in Corollary \ref{cor:ode-steady}.

	\begin{proposition}\label{prop:ode-bound}
	When $0<g_1/V_F<1$, every solution of \eqref{eq:system-1} has positive lower bounds for large time. Precisely, given initial value $b(0)\in\mathbb{R}$, $c(0)>0$, there exists time $T^*>0$ such that
	\begin{equation}\label{eq:lower}
	    b(t)\geq \frac{1}{2}g_0>0,\ c(t)\geq \frac{1}{2}a_0>0,\quad t\geq T^*.
	\end{equation}
	Furthermore, the solution is uniformly bounded in time, i.e.,
	\begin{equation}
	    |b(t)|+|c(t)|<C_0,\quad \forall t\geq 0,
	\end{equation} where the constant $C_0$ depends on the initial value and parameters.
\end{proposition}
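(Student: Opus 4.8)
The plan is to separate the statement into two parts: the eventual positive lower bounds, which follow from the non-negativity $N\ge 0$ alone, and the uniform upper bounds, which genuinely use the strict inequality $g_1/V_F<1$ through the sharp estimate $V_F N(b,c)\le b_++\sqrt c$ of Lemma \ref{lemma-bound-N}. Throughout, Lemma \ref{lemma-ODE-initial} keeps $c(t)>0$, so $N(b(t),c(t))$ is well-defined along the flow, and the $C^1$ regularity of $N$ on $\{c>0\}$ from Lemma \ref{lemma:N-der} gives local existence and uniqueness; the a priori bounds below will then upgrade this to a global solution that remains in a compact subset of $\{c>0\}$.

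For the lower bounds I would simply discard the non-negative firing-rate terms: from \eqref{eq:system-1} and $N\ge 0$ one has $\frac{db}{dt}\ge g_0-b$ and $\frac{dc}{dt}\ge 2a_0-2c$. Comparing with the linear equations $\frac{d\underline b}{dt}=g_0-\underline b$ and $\frac{d\underline c}{dt}=2a_0-2\underline c$, whose solutions relax monotonically to $g_0$ and $a_0$, gives $\liminf_{t\to\infty}b(t)\ge g_0$ and $\liminf_{t\to\infty}c(t)\ge a_0$, hence a time $T^*$ with $b(t)\ge\frac12 g_0$ and $c(t)\ge\frac12 a_0$ for $t\ge T^*$. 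The same comparison yields the all-time bounds $b(t)\ge\min(b(0),g_0)$ and $c(t)\ge\min(c(0),a_0)>0$, which I will use to close the argument.

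For the uniform upper bound the plan is to produce a forward-invariant rectangle $R=\{(b,c):b\le B_0,\ 0<c\le C_0\}$ using $V_F N\le b_++\sqrt c$. On the right edge $b=B_0$ one estimates $\frac{db}{dt}=g_0+g_1N-B_0\le g_0+\frac{g_1}{V_F}(B_0+\sqrt{C_0})-B_0$, which is strictly negative as soon as $\mu B_0>g_0+\frac{g_1}{V_F}\sqrt{C_0}$, where $\mu:=1-\frac{g_1}{V_F}>0$; this is exactly where $g_1/V_F<1$ enters. On the top edge $c=C_0$ one estimates $\frac{dc}{dt}=2a_0+2a_1N-2C_0\le 2a_0+\frac{2a_1}{V_F}(B_0+\sqrt{C_0})-2C_0$, strictly negative as soon as $C_0>a_0+\frac{a_1}{V_F}(B_0+\sqrt{C_0})$. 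These two conditions are compatible: for any $B_0$ one may take $C_0$ linear in $B_0$ so the top-edge condition holds, whence $\sqrt{C_0}=O(\sqrt{B_0})$ and the right-edge condition holds once $B_0$ is large (the linear $\mu B_0$ beats the $\sqrt{B_0}$). Enlarging $B_0,C_0$ so that $(b(0),c(0))\in R$, a standard first-exit-time argument (if $b$ first reaches $B_0$ then $\frac{db}{dt}<0$ there, a contradiction, and likewise for $c$, with the corner handled since both derivatives are negative) shows $R$ is forward invariant, giving $b(t)\le B_0$ and $c(t)\le C_0$ for all $t\ge0$. Together with the lower bounds this produces $|b(t)|+|c(t)|\le C_0'$ with $C_0'$ depending on the data and parameters.

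The main obstacle is the coupling of $b$ and $c$ through the shared nonlinearity $N$, which makes any attempt to bound one variable in isolation circular. The structural facts that break the feedback are that $N$ grows only sublinearly, like $\sqrt c$, in the variance, and that the net coefficient of $b$ in the $b$-equation is the strictly negative $-\mu$ precisely when $g_1/V_F<1$; these are exactly what render the two rectangle inequalities simultaneously solvable. I would present the invariant-rectangle version above as it is the cleanest to state; an equivalent route is a Gronwall estimate for $E=c+\kappa b$ with $\kappa>\frac{2a_1}{V_F-g_1}$ chosen to annihilate the $b$-contribution coming from $N$, followed by Young's inequality to absorb the residual $\sqrt c$ term into the damping, but the geometric trapping argument avoids these manipulations.
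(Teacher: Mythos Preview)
Your argument is correct. The lower-bound half is essentially the same as the paper's (both are one-line comparison arguments; you use $N\ge0$ while the paper uses the slightly sharper $N\ge b$ after normalizing $V_F=1$, but either suffices).

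For the upper bound you take a genuinely different route. The paper builds a quadratic Lyapunov functional $\tfrac12(b^2+\delta_0 c^2)$ and, after inserting $V_FN\le b_++\sqrt c$ and several Young-inequality steps, chooses $\delta_0$ small enough to make the resulting quadratic form coercive, obtaining a Gronwall inequality for $b^2+\delta_0 c^2$. Your invariant-rectangle argument is more geometric and arguably cleaner: it avoids the parameter tuning of $\delta_0,\delta_1$ and makes transparent exactly where $g_1/V_F<1$ enters (the net linear coefficient $-\mu$ on the right edge). The one advantage of the paper's approach is that it is later recycled almost verbatim for the perturbed, asymptotically autonomous systems \eqref{eq:non-auto} and \eqref{dynamics-BC-vg}, where the extra $\varepsilon(t)$ term is simply absorbed into the same quadratic inequality; your rectangle argument would need a small adaptation (e.g.\ allowing the rectangle to grow slowly or treating $\varepsilon(t)$ as a time-dependent perturbation of the edge inequalities), which is routine but worth noting if you plan to reuse the bound downstream.
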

\begin{proof}[Proof of Proposition \ref{prop:ode-bound}]
WLOG we consider the case $V_F=1$, as in Proposition \ref{prop:g1>=1}. Then in the following proof we can assume $0<g_1<1$. We rewrite \eqref{eq:system-1} as
	\begin{equation}\label{eq:system-V_F=1}
	\begin{aligned}
	\frac{db(t)}{dt}&=g_0+g_1N(b(t),c(t))-b(t)=g_0-(1-g_1)b+g_1(N-b),\\
	\frac{dc(t)}{dt}&=2a_0+2a_1N(b(t),c(t))-2c(t)=2a_0+2a_1N-2c.
	\end{aligned}
	\end{equation}
	First we consider the lower bounds \eqref{eq:lower}. If $b(t)\leq\frac{1}{2}g_0$, we obtain $\frac{db(t)}{dt}\geq g_0-(1-g_1)b(t)\geq \frac{1}{2}g_0>0$, since Lemma \ref{lemma-bound-N} implies $N-b\geq 0$ when $V_F=1$. Similarly if $c(t)\leq \frac{1}{2}a_0$ we obtain  $\frac{dc(t)}{dt}\geq 2(a_0-c(t))\geq a_0>0$, since $N\geq 0$ also by Lemma \ref{lemma-bound-N}. Therefore there exists $T^*$ such that for $t\geq T^*$, we have $b(t)\geq \frac{1}{2}g_0>0, c(t)\geq\frac{1}{2}a_0>0$. 
 
The upper bound relies on the control $N\leq b_++\sqrt{c}$ in Lemma \ref{lemma-bound-N}, and the bound $2\sqrt{c}\leq \delta c+\frac{1}{\delta}$ for all $\delta>0$ via Young's inequality. We shall construct a Liapounov functional. Precisely we will prove that there exists some $\delta_0>0$ such that 
\begin{equation}\label{def-Liapounov}
    \frac{d}{dt}[\frac{1}{2}(b^2+\delta_0 c^2)]\leq C-C_1(b^2+\delta_0 c^2),
\end{equation}with some $C,C_1>0$. Then by Gronwall inequality, $b^2+\delta_0 c^2$ will be uniformly bounded in time and the proof is complete. In the following, $C$ denotes a positive constant which may vary from line to line as custom. Recall $N\leq b_++\sqrt{c}$ from Lemma \ref{lemma-bound-N}, we compute
\begin{equation}\label{eq:upbd-system}
    \begin{aligned}
	\frac{1}{2}\frac{d(b^2(t))}{dt}&=b(g_0+g_1N-b)\leq C|b|-(1-g_1)b^2+g_1|b|\sqrt{c},\\
	\frac{1}{2}\frac{d(c^2(t))}{dt}&=c(2a_0+2a_1N-2c)\leq C|c|+2a_1|bc|+2a_1c\sqrt{c}-2c^2.
	\end{aligned}
\end{equation}
By Young's inequality we have $g_1|b|\sqrt{c}\leq C|b|+g_1\delta_1|bc|$ for some $\delta_1>0$ to be determined and $2a_1c\sqrt{c}\leq C|c|+c^2$. Then from \eqref{eq:upbd-system} we deduce
\begin{equation}\label{eq:upbd-system-2}
    \begin{aligned}
	\frac{1}{2}\frac{d(b^2(t))}{dt}&\leq C|b|-(1-g_1)b^2+g_1\delta_1|bc|,\\
	\frac{1}{2}\frac{d(c^2(t))}{dt}&\leq C|c|+2a_1|bc|-c^2.
	\end{aligned}
\end{equation}
Then for $0<\delta_0<1$ to be determined, we calculate
\begin{equation}\label{tmp-deri-ode-1}
    \frac{d}{dt}[\frac{1}{2}(b^2+\delta_0 c^2)]\leq C(|b|+|c|)-(1-g_1)b^2+(g_1\delta_1+2a_1\delta_0)|bc|-\delta_0c^2.
\end{equation}Now we choose $\delta_1$ such that $g_1\delta_1=a_1\delta_0$. And we choose $\delta_0$ small enough such that
\begin{align*}
    (g_1\delta_1+2a_1\delta_0)^2=9a_1^2\delta_0^2<4(1-g_1)\delta_0,
\end{align*} which can be ensured by choosing $\delta_0<\min(4\frac{(1-g_1)}{9a_1^2},1)$.
For such $\delta_0$ and $\delta_1$, by mean-value inequality there exists $\delta_2>0$ such that $(1-g_1)b^2-(g_1\delta_1+2a_1\delta_0)|bc|+\delta_0c^2\geq \delta_2(b^2+c^2)$. Then by \eqref{tmp-deri-ode-1} we get
\begin{equation}\label{tmp-deri-ode-1.5}
    \frac{d}{dt}[\frac{1}{2}(b^2+\delta_0 c^2)]\leq C(|b|+|c|)-\delta_2(b^2+c^2).
\end{equation} Again we use Young's inequality to control the low order terms, $2|b|\leq \frac{1}{\delta}+\delta b^2,2|c|\leq \frac{1}{\delta}+\delta c^2$. In this way we conclude
\begin{equation}\label{tmp-deri-ode-2}
    \frac{d}{dt}[\frac{1}{2}(b^2+\delta_0 c^2)]\leq C-C_0(b^2+c^2),
\end{equation} for some $C,C_0>0$, which implies \eqref{def-Liapounov}. Then the proof of the upper bound is complete.
\end{proof}

Next, we show that in the regime $0<g_1/V_F<1$ the system \eqref{eq:system-1} has a unique steady state $(b^*,c^*)$ which is linearly stable. Moreover, we derive bounds on $(b^*,c^*)$ which allow us to see its behavior as as $g_1/V_F$ approaches $1^{-}$.

\begin{proposition}\label{prop:ode-steady}
When $0<g_1/V_F<1$, then the ODE system \eqref{eq:system-1} has a unique steady state $(b^*,c^*)$ in $\mathbb{R}\times\mathbb{R}^+$. And the steady state is linearly stable. Moreover, the following bounds hold
\begin{equation}\label{bound-b*c*}
    \begin{aligned}
        b^*&\geq \frac{g_0}{1-g_1/V_F}>0,\\
        c^*&\geq a_0+\frac{a_1}{V_F}\frac{g_0}{1-g_1/V_F}>0.
    \end{aligned}
\end{equation}
\end{proposition}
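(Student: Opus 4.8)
The plan is to prove Proposition~\ref{prop:ode-steady} in three stages: existence and uniqueness of the steady state, the lower bounds \eqref{bound-b*c*}, and linear stability. Throughout I work WLOG with $V_F=1$ as in the earlier proofs, so the standing hypothesis is $0<g_1<1$. A steady state $(b^*,c^*)$ is a solution of the fixed-point system obtained by setting the right-hand sides of \eqref{eq:system-V_F=1} to zero, namely $b=g_0+g_1N(b,c)$ and $c=a_0+a_1N(b,c)$. The key structural observation is that both equations are governed by the single scalar quantity $N(b,c)$, so I would decouple the system. First I would eliminate $c$ in favor of $b$: subtracting appropriate multiples gives the affine relation $c = a_0 + \tfrac{a_1}{g_1}(b-g_0)$, valid at any steady state. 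Substituting this into $b = g_0 + g_1 N(b,c)$ reduces the problem to a single scalar equation $F(b):=g_0+g_1 N\bigl(b,\,a_0+\tfrac{a_1}{g_1}(b-g_0)\bigr)-b=0$.

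For existence and uniqueness I would analyze $F$ on the relevant interval. Using Lemma~\ref{lemma-bound-N}, the bound $b_+\le N\le b_++\sqrt c$ forces any root to satisfy $b^*\ge g_0>0$, so I restrict to $b\ge g_0$ where the substituted variance $c$ stays positive. On this range I compute $F'(b)=g_1\bigl(\partial_b N + \tfrac{a_1}{g_1}\partial_c N\bigr)-1$. By Lemma~\ref{lemma:N-der}, $\partial_b N\in(0,1)$ (recall $V_F=1$) and $\partial_c N>0$; the crucial point is that along the constraint line the composite derivative stays strictly below $1/g_1$, so $F'(b)<g_1\cdot\tfrac{1}{g_1}-1=0$ once I verify the combined slope bound. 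Hence $F$ is strictly decreasing, giving at most one root. For existence I check the sign change: at $b=g_0$ one has $N\ge g_0>0$ so $F(g_0)=g_1 N>0$, while the linear growth bound $N\le b+\sqrt c$ together with $\sqrt c=O(\sqrt b)$ shows $F(b)\le (g_1-1)b + o(b)\to-\infty$, so $F$ eventually turns negative. Continuity of $N$ (hence $F$) then yields a unique root $b^*$, and $c^*$ is recovered from the affine relation.

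The lower bounds \eqref{bound-b*c*} follow directly from the steady-state identities once monotonicity is in hand. From $b^*=g_0+g_1 N(b^*,c^*)$ and the lower bound $N\ge b_+^*/V_F = b^*$ (using $V_F=1$ and $b^*>0$), I get $b^*\ge g_0+g_1 b^*$, i.e. $b^*(1-g_1)\ge g_0$, which is exactly $b^*\ge g_0/(1-g_1/V_F)$ after restoring $V_F$. Then from $c^*=a_0+a_1 N(b^*,c^*)\ge a_0+a_1 b^*$ and the bound just obtained for $b^*$, I recover $c^*\ge a_0+\tfrac{a_1}{V_F}\,\tfrac{g_0}{1-g_1/V_F}$. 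These are elementary substitutions and I would not belabor them.

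For linear stability I would linearize \eqref{eq:system-V_F=1} at $(b^*,c^*)$. The Jacobian is
\begin{equation*}
J=\begin{pmatrix} g_1\partial_b N-1 & g_1\partial_c N\\[2pt] 2a_1\partial_b N & 2a_1\partial_c N-2\end{pmatrix},
\end{equation*}
with all partials evaluated at the steady state. Using $\partial_b N\in(0,1)$ and $\partial_c N>0$, together with $g_1<1$, I would show $\operatorname{tr}J<0$ and $\det J>0$: the trace is $(g_1\partial_b N-1)+(2a_1\partial_c N-2)$, where $g_1\partial_b N<1$ and $2a_1\partial_c N$ must be shown not to exceed $2$, and the determinant expands to $2(g_1\partial_b N-1)(a_1\partial_c N-1)-2a_1 g_1\partial_b N\partial_c N = 2\bigl[(1-g_1\partial_b N)-a_1\partial_c N(1-g_1\partial_b N)-a_1 g_1\partial_b N\partial_c N\bigr]$, which simplifies to $2\bigl[(1-g_1\partial_b N)-a_1\partial_c N\bigr]$ after cancellation. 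I expect the main obstacle to be controlling the off-diagonal coupling through $\partial_c N$: I must establish the sharp inequality that makes both $\operatorname{tr}J<0$ and $\det J>0$ hold simultaneously, which amounts to showing $g_1\partial_b N+a_1\partial_c N<1$ at the steady state rather than merely $\partial_b N<1/g_1$. This is precisely the condition guaranteeing $F'(b^*)<0$ in stage one, so I would prove that combined slope bound once, early, and reuse it for both uniqueness and stability. The explicit formulas $\partial_b N=\int_{-b/\sqrt c}^\infty \tfrac{1}{\sqrt{2\pi}}e^{-g^2/2}\,dg$ and $\partial_c N=\tfrac{1}{2\sqrt c}\tfrac{1}{\sqrt{2\pi}}e^{-b^2/(2c)}$ from Lemma~\ref{lemma:N-der} should make this inequality checkable, possibly exploiting that $\partial_c N$ is exponentially small when $b$ is large and that $b^*$ is bounded below away from $0$.
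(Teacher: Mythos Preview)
Your overall architecture is sound and matches the paper's: reduce the two-equation steady-state system to a single scalar equation via the affine relation between $b$ and $c$, locate the root by a sign change, and identify the determinant condition $\det J>0$ with the scalar derivative condition $g_1\partial_b N+a_1\partial_c N<1$. Your derivation of $\tfrac12\det J=1-g_1\partial_b N-a_1\partial_c N$ is correct, and you are right that this single inequality drives both uniqueness and stability. The lower bounds \eqref{bound-b*c*} are exactly as you describe.

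The genuine gap is in proving the key inequality $g_1\partial_b N+a_1\partial_c N<1$. Your heuristic---``$\partial_c N$ is exponentially small when $b$ is large and $b^*$ is bounded below away from $0$''---does not work, because what matters is the dimensionless ratio $\lambda=b^*/\sqrt{c^*}$, not $b^*$ alone, and $a_1$ can be arbitrarily large. There is no a priori reason $a_1\partial_c N$ should be small. The paper's trick is to use the steady-state identity $a_1N=c^*-a_0$ to \emph{eliminate the parameter} $a_1$, writing
\[
a_1\partial_c N=\frac{c^*-a_0}{c^*}\cdot\frac{c^*\partial_c N}{N}=\frac{c^*-a_0}{c^*}\cdot\frac{\tfrac{1}{2}\phi(\lambda)}{\phi(\lambda)+\lambda(1-\Phi(\lambda))},
\]
which is a function of $\lambda$ times a factor strictly less than $1$. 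Then one is reduced to a \emph{parameter-free} Gaussian inequality, namely $A_1+A_2\le1$ for all $\lambda\ge0$ (Lemma~\ref{lemma-ode-technical} in the paper), where $A_1=1-\Phi(\lambda)$ and $A_2$ is the above ratio; this requires a short but nontrivial argument (splitting $\lambda\ge1$ and $\lambda\in[0,1]$, using the Mills-ratio bound and a convexity check). The trace bound $a_1\partial_c N<1$ is obtained by the same substitution. A secondary point: the paper only shows $F'<0$ \emph{at roots}, which suffices for uniqueness given the sign change; your claim of global strict monotonicity is stronger and, because $a_1$ is arbitrary, may actually fail for some parameter choices on the full domain.
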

\begin{proof}[Proof of Proposition \ref{prop:ode-steady}]WLOG we consider the case $V_F=1$, otherwise we use $g_1/V_F,a_1/V_F$ instead of $g_1,a_1$. Then in the following we have $0<g_1<1$.

Suppose $(b^*,c^*)$ is a steady state. From \eqref{eq:system-1} we get
\begin{equation}\label{eq:system-1-steady}
\begin{aligned}
0&=g_0+g_1N(b^*,c^*)-b^*,\\
0&=2a_0+2a_1N(b^*,c^*)-2c^*.
\end{aligned}
\end{equation} First, we suppose the steady state exists and derive the bound \eqref{bound-b*c*}. Applying $N(b,c)\geq b_+\geq b$ from Lemma \ref{lemma-bound-N}, we derive from the first equation of \eqref{eq:system-1-steady}
\begin{equation*}
    b^*\geq g_1b^*+g_0.
\end{equation*} Thus $b^*\geq \frac{g_0}{1-g_1}$. Using Lemma \ref{lemma-bound-N} and this bound on $b^*$ in the second equation of \eqref{eq:system-1-steady}, we deduce the bound of $c^*$
\begin{equation*}
    c^*\geq a_0+a_1b^*\geq a_0+a_1\frac{g_0}{1-g_1}.
\end{equation*}

Next, we solve \eqref{eq:system-1-steady} to find the steady state. One can eliminate the firing rate $N$ in \eqref{eq:system-1-steady} to obtain
\begin{equation}\label{steady-tmp-1}
a_1b^*-g_1c^*=a_1g_0-g_1a_0.
\end{equation} 
We define a function $\beta(c):=g_0+\frac{g_1}{a_1}(c-a_0)$, then \eqref{steady-tmp-1} is equivalent to
\begin{equation}\label{function-beta}
b^*=\frac{1}{a_1}(a_1g_0+g_1(c^*-a_0))=g_0+\frac{g_1}{a_1}(c^*-a_0)=\beta(c^*).
\end{equation}Now we have represented $b^*$ as a function of $c^*$. Plugging \eqref{function-beta} back to the second equation in \eqref{eq:system-1-steady}, we get
\begin{equation}
    a_0+a_1N(\beta(c^*),c^*)-c^*=0.
\end{equation}
Therefore finding steady states reduces to finding zeros of the following nonlinear function
\begin{equation}\label{eq:f}
F(c):=a_0+a_1N(\beta(c),c)-c,\quad c\in[a_0,\infty)
\end{equation}
Here we restrict the domain to $c\geq a_0$ because $F(c)\geq a_0-c>0$ if $c<a_0$, since $N\geq0$ by Lemma \ref{lemma-bound-N}. For later reference, we also note that at steady state $c^*\geq a_0$ implies 
\begin{equation}\label{lower-b}
    b^*=\beta(c^*)\geq g_0>0.
\end{equation}

We observe that $F(a_0)=a_1N(\beta(a_0),a_0)>0$ and that from Lemma \ref{lemma-bound-N}
\begin{equation}
    F(c)\leq C+g_1(c-a_0)_+ +a_1\sqrt{c}-c\leq C+a_1\sqrt{c}-(1-g_1)c.
\end{equation}Therefore $F(c)$ goes to $-\infty$ as $c$ goes to infinity. As a result, there exists at least one zero of $F$ on $(a_0,+\infty)$, which implies the existence of a steady state.

If we can show whenever $c$ is a zero of $F$, we have $F'(c)<0$, then we can conclude that $F(c)$ has exactly one zero on $(a_0,+\infty)$, which implies that the steady state is unique. We take the derivative of $F$
\begin{equation}\label{eq:unique-state}
F'(c)=g_1\frac{\p N}{\p b}(\beta(c),c)+a_1\frac{\p N}{\p c}(\beta(c),c)-1.
\end{equation}
The sign of $F'(c)$ for $F(c)=0$ turns out to be related with the sign of the determinant of the linearized matrix at the steady state. Thus we postpone the proof of uniqueness after some preliminary discussion on linear stability.
 
Suppose $(b^*,c^*)$ is a steady state, we write down the linearized matrix of \eqref{eq:system-1} 
\begin{equation}J=
\begin{pmatrix}
g_1\frac{\partial N}{\partial b}-1& g_1\frac{\p N}{\p c}.\\
2a_1\frac{\p N}{\p b}& 2a_1\frac{\p N}{\p c}-2
\end{pmatrix}.
\end{equation}
Let us first look at the trace of $J$. The first diagonal entry is negative since $g_1\frac{\p N}{\p b}=g_1\int_{-\frac{b}{\sqrt{c}}}^{+\infty}\frac{1}{\sqrt{2\pi}}e^{-\frac{g^2}{2}}dg<1$ by Lemma \ref{lemma:N-der} and our assumption $V_F=1$. For the second diagonal entry, substituting $a_1=\frac{c^*-a_0}{N}$ at a steady state, we get
\begin{align}\notag
a_1\frac{\p N}{\p c}&=\frac{c^*-a_0}{N}\frac{\p N}{\p c}\\ &=\frac{c^*-a_0}{c^*}\frac{c^*}{N}\frac{\p N}{\p c}<\frac{c^*}{N}\frac{\p N}{\p c}.\label{compute-diag-2}
\end{align}
To proceed, we introduce the notation $\lambda:=\frac{b^*}{\sqrt{c^*}}$ which is positive by \eqref{lower-b}. Recall the formula of $\frac{\p N}{\p c}$ in Lemma \ref{lemma:N-der} 
\begin{equation}\label{compute-diag-2-1}
\sqrt{c}\frac{\p N}{\p c}=\frac{1}{2\sqrt{2\pi}}\exp\left(-\lambda^2/2\right).
\end{equation}
And we rewrite the expression for $N$ from \eqref{compute-N}
\begin{align}\notag
    \frac{N}{\sqrt{c^*}}=\int_{\mathbb{R}}(g+\frac{b}{\sqrt{c^*}})_+\frac{1}{\sqrt{2\pi}}e^{-\frac{1}{2}g^2}dg&=\int_{-\lambda}^{+\infty}(g+\lambda)\frac{1}{\sqrt{2\pi}}e^{-\frac{1}{2}g^2}dg\\&=\frac{1}{\sqrt{2\pi}}e^{-\lambda^2/2}+\lambda\int_{-\lambda}^{+\infty}\frac{1}{\sqrt{2\pi}}e^{-g^2/2}dg. \label{compute-diag-2-2}
\end{align}
Plugging \eqref{compute-diag-2-1} and \eqref{compute-diag-2-2} into \eqref{compute-diag-2} we obtain
\begin{align*}
a_1\frac{\p N}{\p c}<\frac{\sqrt{c}\frac{\p N}{\p c}}{\frac{N}{\sqrt{c}}}=\frac{\frac{1}{2\sqrt{2\pi}}\exp(-\lambda^2/2)}{\frac{1}{\sqrt{2\pi}}\exp(-\lambda^2/2)+\lambda\int_{-\lambda}^{+\infty}\frac{1}{\sqrt{2\pi}}\exp(-g^2/2)dg}<\frac{1}{2}<1.
\end{align*}
Therefore the second diagonal entry is also negative, which implies the trace of $J$ is negative. For linear stability, it remains to study the sign of the determinant, which is related with the sign of $F'(c^*)$ for $F$ in \eqref{eq:f} as follows
\begin{equation}\frac{1}{2}\det J=1-g_1\frac{\p N}{\p b}-a_1\frac{\p N}{\p c}=-F'(c^*).\label{eq:det}
\end{equation}
To prove linear stability, it remains to show that the determinant is positive. And if the determinant is positive, we have $F'(c^*)<0$ whenever $c^*$ is a zero of $F$, which implies the uniqueness.

Therefore our problem reduces to show that $\det J>0$. Following similar calculations for the diagonal entities, we rewrite the expression of $\det J$ \eqref{eq:det} as follows.
\begin{align}\notag
\frac{1}{2}\det J&=1-g_1\frac{\p N}{\p b}-a_1\frac{\p N}{\p c}\\&=1-g_1(\int_{-\infty}^{\lambda}\frac{1}{\sqrt{2\pi}}\exp(-g^2/2)dg)-\frac{c^*-a_0}{c^*}\frac{\frac{1}{2\sqrt{2\pi}}\exp(-\lambda^2/2)}{\frac{1}{\sqrt{2\pi}}\exp(-\lambda^2/2)+\lambda\int_{-\lambda}^{+\infty}\frac{1}{\sqrt{2\pi}}\exp(-g^2/2)dg}\notag\\
&=1-g_1A_1-\frac{c^*-a_0}{c^*}A_2.\label{tmp-det}
\end{align}

Here we define
\begin{equation}\label{def-A1A2}
\begin{aligned}
A_1&:=\int_{-\infty}^{\lambda}\frac{1}{\sqrt{2\pi}}\exp(-g^2/2)dg,\\
A_2&:=\frac{\frac{1}{2\sqrt{2\pi}}\exp(-\lambda^2/2)}{\frac{1}{\sqrt{2\pi}}\exp(-\lambda^2/2)+\lambda\int_{-\lambda}^{+\infty}\frac{1}{\sqrt{2\pi}}\exp(-g^2/2)dg}.
\end{aligned}
\end{equation}
We need the following technical lemma.
\begin{lemma}\label{lemma-ode-technical} For all $\lambda\geq 0$, the following inequality holds
\begin{equation}\label{eq:A1A2}
A_1+A_2\leq 1,
\end{equation} where $A_1,A_2$ are defined as in \eqref{def-A1A2}.
\end{lemma}We postpone the proof of Lemma \ref{lemma-ode-technical} to the end of this section. Applying Lemma \ref{lemma-ode-technical} to \eqref{tmp-det} with $\lambda=\frac{b^*}{\sqrt{c^*}}>0$, we conclude $$\frac{1}{2}\det J=1-g_1A_1-\frac{c^*-a_0}{c*}A_2>1-A_1-A_2\geq 0.$$ Therefore both the linear stability and the uniqueness are proved.
\end{proof}

The bound \eqref{bound-b*c*} in Proposition \ref{prop:ode-bound} implies that both $b^*$ and $c^*$ go to infinity as $g_1/V_F$ approaches $1^{-}$. This explains why there is no steady state of \eqref{eq:system-1} when $g_1/V_F\geq 1$. Noting that $b^*$ represents the mean of a Gaussian solution of \eqref{eq:reduce-p} and that $c^*$ is the variance, we interpret that the steady state loses tightness  as $g_1/V_F$ approaches $1^{-}$.

\begin{corollary}\label{cor:ode-steady}
When $0<g_1/V_F<1$, the unique steady state of \eqref{prop:ode-steady} $(b^*,c^*)$ satisfy
\begin{equation*}
    b^*\rightarrow +\infty, c^*\rightarrow +\infty,
\end{equation*} as $g_1/V_F$ approaches $1^{-}$. And when $g_1/V_F\geq 1$, there is no steady state of \eqref{eq:system-1}.
\end{corollary}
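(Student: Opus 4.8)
The plan is to dispatch the two assertions separately, each as a short consequence of results already established.

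For the divergence of the steady state as $g_1/V_F \to 1^-$, I would invoke directly the lower bounds \eqref{bound-b*c*} proved in Proposition \ref{prop:ode-steady}. Since $b^* \geq \frac{g_0}{1 - g_1/V_F}$ and the denominator tends to $0^+$ while $g_0 > 0$ stays fixed, the right-hand side blows up, forcing $b^* \to +\infty$. The companion bound $c^* \geq a_0 + \frac{a_1}{V_F}\frac{g_0}{1 - g_1/V_F}$ then yields $c^* \to +\infty$ as well. No new computation is required here; the first claim is simply a reading-off of the a priori bounds.

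For the nonexistence of a steady state when $g_1/V_F \geq 1$, I would argue by contradiction using only the first steady-state equation together with the one-sided bound on $N$ from Lemma \ref{lemma-bound-N}. Suppose $(b^*, c^*) \in \mathbb{R}\times\mathbb{R}^+$ were a steady state. The first line of \eqref{eq:system-1-steady} reads $b^* = g_0 + g_1 N(b^*, c^*)$, and since $g_0, g_1 > 0$ and $N \geq 0$ this already gives $b^* > 0$. Normalizing $V_F = 1$ as in the earlier proofs, Lemma \ref{lemma-bound-N} then gives $N(b^*, c^*) \geq b_+^* = b^*$, so the steady-state relation forces $b^* \geq g_0 + g_1 b^*$, that is, $(1 - g_1)\, b^* \geq g_0 > 0$. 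When $g_1 \geq 1$ the left-hand side is nonpositive, a contradiction; hence no steady state can exist.

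There is no genuine obstacle in this corollary---it is essentially a repackaging of the bound \eqref{bound-b*c*} for the first claim and a one-line contradiction for the second. The only point requiring care is the order of reasoning in the second part: one must first extract $b^* > 0$ from the structure of the equation before replacing $b_+^*$ by $b^*$ when applying Lemma \ref{lemma-bound-N}. It is worth noting that this second half re-derives, at the level of the ODE system \eqref{eq:system-1}, the same threshold already obtained directly for the $v$-homogeneous PDE in Proposition \ref{prop:g1>=1}, thereby tying the two regimes together and confirming that the ODE, the $v$-homogeneous PDE, and (eventually) the full model share the same dichotomy at $g_1/V_F = 1$.
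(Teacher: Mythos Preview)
Your proof is correct. The first part---reading off divergence from the lower bounds \eqref{bound-b*c*}---is identical to the paper's argument.

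For the second part, the paper takes a different route: rather than arguing algebraically from the steady-state equation, it invokes Proposition \ref{prop:g1>=1} directly. Since the ODE \eqref{eq:system-1} describes the special Gaussian solutions of the $v$-homogeneous PDE \eqref{eq:reduce-p}, and Proposition \ref{prop:g1>=1} shows that \emph{every} solution of \eqref{eq:reduce-p} has $N(t)\to+\infty$ when $g_1/V_F\geq 1$, no steady state of the ODE can exist. Your argument is more elementary and self-contained at the ODE level: you essentially rerun the derivation of the bound \eqref{bound-b*c*} (which already appears inside the proof of Proposition \ref{prop:ode-steady}) and observe that the inequality $(1-g_1)b^*\geq g_0$ becomes a contradiction once $g_1\geq 1$. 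The paper's route, by contrast, emphasizes that the nonexistence at the ODE level is a shadow of the dynamical divergence at the PDE level---a point you also acknowledge in your closing remark. Both approaches are short and valid; yours avoids appealing to a dynamical result when a static one suffices, while the paper's makes the link between the models explicit.
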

\begin{proof}[Proof of Corollary \ref{cor:ode-steady}]
    The part when $0<g_1/V_F<1$ follows from the bound \eqref{bound-b*c*} in Proposition \ref{prop:ode-bound}. Non-existence of a steady state when $g_1/V_F\geq 1$ is implied by Proposition \ref{prop:g1>=1}. Indeed, since the ODE \eqref{eq:system-1} represents the special Gaussian solution \eqref{eq:Gaussian anstaz} of \eqref{eq:reduce-p}, Proposition \ref{prop:g1>=1} gives that when $g_1/V_F\geq 1$ for every solution $(b(t),c(t))$,
    \begin{equation*}
        N(b(t),c(t))\rightarrow +\infty,\quad \text{as } t\rightarrow+\infty, 
    \end{equation*} which implies that there is no steady state.
\end{proof}

Now we complete this section by proving Lemma \ref{lemma-ode-technical}.
\begin{proof}[Proof of Lemma \ref{lemma-ode-technical}]
 For convenience we introduce the following notations for Gaussian density and Gaussian tail probability $$\phi(\mu)=\frac{1}{\sqrt{2\pi }}\exp(-\frac{\mu^2}{2}),\Phi(\mu)=\int_{\mu}^{+\infty}\frac{1}{\sqrt{2\pi }}\exp(-\frac{g^2}{2})dg,$$ and rewrite \eqref{eq:A1A2} into
 \begin{equation*}
 1-\Phi(\lambda)+\frac{1}{2}\frac{\phi(\lambda)}{\phi(\lambda)+\lambda(1-\Phi(\lambda))}\leq 1,
 \end{equation*}which is equivalent to
 \begin{equation}\label{eq:ls-final-0}
\frac{1}{2}{\phi(\lambda)}\leq \Phi(\lambda)(\phi(\lambda)+\lambda(1-\Phi(\lambda))).
\end{equation}
First we consider the case $\lambda\geq 1$. We rewrite \eqref{eq:ls-final-0} as
 \begin{equation}\label{eq:ls-final}
\frac{1}{2}\leq \Phi(\lambda)+\lambda(1-\Phi(\lambda))\frac{\Phi(\lambda)}{{\phi(\lambda)}}.
\end{equation}
By the famous Gaussian tail probability inequality $\Phi(\lambda)\geq \frac{\lambda}{1+\lambda^2}\phi(\lambda)$, we have $\frac{\Phi(\lambda)}{\phi(\lambda)}\geq \frac{\lambda}{1+\lambda^2}$, therefore
\begin{align*}
\Phi(\lambda)+\lambda(1-\Phi(\lambda))\frac{\Phi(\lambda)}{{\phi(\lambda)}}&\geq \Phi(\lambda)+\lambda(1-\Phi(\lambda))\frac{\lambda}{1+\lambda^2}\\&=\frac{\lambda^2}{1+\lambda^2}+\Phi(\lambda)\frac{1}{\lambda^2+1}\geq \frac{\lambda^2}{1+\lambda^2}\geq \frac{1}{2}.
\end{align*} In the last inequality we use $\lambda\geq 1$.

It remains to deal with the case $\lambda\in[0,1]$. In this case, we rewrite \eqref{eq:ls-final-0} as
\begin{equation}\label{tmp-Hlambda}
    H(\lambda)\geq 0,\quad \lambda\in[0,1],
\end{equation}where $H(\lambda)$ is defined as
 \begin{align*}
H(\lambda):&=\Phi(\lambda)(\phi(\lambda)+\lambda(1-\Phi(\lambda)))-\frac{1}{2}{\phi(\lambda)}.
\end{align*} 
We claim that $H''(x)\leq 0$ for $x\in[0,1]$. If this is true, we deduce for $\lambda\in[0,1]$, $H(\lambda)\geq\min(H(0),H(1))$. And $\min(H(0),H(1))=0$ since $H(0)=0$ and from the $\lambda\geq 1$ case we know $H(1)\geq 0$. Therefore our final task is to show the desired sign of  $H''(x)$, actually one calculates
\begin{align*}
    H''(x)=\frac{e^{-x^2} (2\sqrt{2} e^{x^2/2} (x^2 - 3)\int_0^{x/\sqrt{2}}e^{-t^2}dt + 2 x)}{4\pi}.
\end{align*}
And by $\int_0^{x/\sqrt{2}}e^{-t^2}dt\geq \frac{x}{\sqrt{2}}e^{-x^2/2}$, we deduce
\begin{align*}
    2\sqrt{2} e^{x^2/2}(x^2-3)\int_0^{x/\sqrt{2}}e^{-t^2}dt+2x&\leq 2\sqrt{2}e^{x^2/2}(x^2-3)\frac{x}{\sqrt{2}}e^{-x^2/2}+2x\\&=2x(x^2-3)+2x\leq -4x+2x\leq 0,\quad \forall x\in[0,1].
\end{align*} Then the proof is complete.

\end{proof}

	\subsubsection{Proof of Theorem \ref{thm:ode}}\label{sc:ode-convergence}
	With preparations in previous sections, now we prove Theorem \ref{thm:ode}, that all solutions of \eqref{eq:system-1} converge to the unique steady state $(b^*,c^*)$ when $g_1/V_F<1$. 
	
	In Proposition \ref{prop:ode-steady} we show the linear stability, which implies local convergence in a neighborhood of $(b^*,c^*)$. For global convergence, the key observation is that the system \eqref{eq:system-1} is a cooperative system \cite{hirsch2006monotone} in the following sense: If we write the ODE system \eqref{eq:system-1} as
\begin{equation*}
\begin{cases}
\frac{db}{dt}=f_1(b,c),\\
\frac{dc}{dt}=f_2(b,c),
\end{cases}
\end{equation*}then by Lemma \ref{lemma:N-der} the following derivatives are both positive,
\begin{equation*}
\frac{\p f_1}{\p c}=g_1\frac{\p N}{\p c}>0,\quad \frac{\p f_2}{\p b}=a_1\frac{\p N}{\p b}>0.
\end{equation*}The long time behavior of cooperative systems, or in a more general framework called monotone systems, has been characterized extensively in literature. We refer to \cite{hirsch2006monotone}, in particular, its section 3.7 for ODE in $\mathbb{R}^2$. 
\begin{proof}[Proof of Theorem \ref{thm:ode}]Now we begin the proof of Theorem \ref{thm:ode}. By Proposition \ref{prop:ode-bound} the solution $(b(t),c(t))$ is uniformly bounded in $\mathbb{R}\times\mathbb{R}^+$ for $t>0$, with a positive lower bound for $c$. We shall use the following result on cooperative systems.
\begin{theorem*}[{\cite[Theorem 3.21, Section 3.7]{hirsch2006monotone}}, in a modified form]
For a solution  $y(t)=(y_1(t),y_2(t))\in\mathbb{R}^2,t>0,$ of a cooperative system. There exists $t^*$ such that for $t>t^*$, $y_1(t)$ and $y_2(t)$ are both monotone in time.
\end{theorem*}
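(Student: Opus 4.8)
The plan is to reduce the claim to a statement about the sign of the velocity vector along the trajectory, and to exploit the planar structure through an invariance-of-quadrants argument for the variational equation. Write the cooperative system as $\dot y=f(y)$ with $f\in C^1$ and $\p_j f_i\geq 0$ for $i\neq j$, and set $w(t):=\dot y(t)=f(y(t))$. Differentiating, $w$ solves the linear nonautonomous variational equation $\dot w=A(t)w$, where $A(t)=Df(y(t))$ is continuous and has nonnegative off-diagonal entries (a Metzler matrix for every $t$). Since each component $y_i$ is monotone on an interval precisely when $w_i$ keeps a fixed sign there, the whole theorem is equivalent to showing that each $w_i(t)$ is eventually of one sign.

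First I would dispose of the degenerate case: if $w(t_0)=0$ at some time, then $y(t_0)$ is an equilibrium and $y$ is constant, so the claim is trivial; otherwise, by uniqueness for the linear equation $\dot w=A(t)w$, one has $w(t)\neq 0$ for all $t$. The heart of the argument is then a Nagumo-type invariance statement for the Metzler flow: the closed quadrants $\bar Q_{++}=\{w_1\geq 0,\,w_2\geq 0\}$ and $\bar Q_{--}=\{w_1\leq 0,\,w_2\leq 0\}$ are positively invariant. I would verify the subtangential condition face by face: on $\{w_1=0,\,w_2\geq 0\}$ the first component of $A(t)w$ equals $a_{12}(t)w_2\geq 0$, so the field does not point out of $\bar Q_{++}$; on $\{w_1\geq 0,\,w_2=0\}$ the second component equals $a_{21}(t)w_1\geq 0$, giving the same conclusion; the corner at the origin is excluded by the previous step, and $\bar Q_{--}$ follows from the symmetry $w\mapsto -w$ of the linear equation.

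With invariance in hand I would conclude by a connectedness dichotomy. The complement $\mathbb{R}^2\setminus(\bar Q_{++}\cup\bar Q_{--})$ is the disjoint union of the two open quadrants $Q_{-+}^{\circ}=\{w_1<0,\,w_2>0\}$ and $Q_{+-}^{\circ}=\{w_1>0,\,w_2<0\}$, separated by the coordinate axes. Either $w(t)$ eventually enters $\bar Q_{++}$ or $\bar Q_{--}$ --- in which case, by positive invariance, it remains there for all later times, so $w_1,w_2$ have fixed signs and $y_1,y_2$ are monotone from that time on --- or $w(t)$ stays in $\mathbb{R}^2\setminus(\bar Q_{++}\cup\bar Q_{--})$ for all $t$; being a continuous curve confined to an open set with two disjoint components, it then lies in a single $Q_{\pm\mp}^{\circ}$ for all time, so $w_1,w_2$ keep constant (opposite) signs and $y_1,y_2$ are globally monotone. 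In both cases one obtains the required $t^*$ (finite in the first situation, $t^*=0$ in the second).

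I expect the main obstacle to be the careful handling of the boundary axes rather than the invariance in the interior. Specifically, Nagumo's theorem must be applied to the \emph{closed} quadrants so that a trajectory reaching an axis (where, say, $w_1=0$) is trapped rather than allowed to cross; this is exactly where the nonnegativity of the off-diagonal entries $a_{12},a_{21}$ enters, and one must simultaneously rule out tangential escape through the origin. A secondary technical point is the $C^1$ regularity of $f$ needed to define $w=\dot y$ and its variational equation; in the present application this holds because $N(b,c)$ is smooth for $c>0$ with the derivative formulas already recorded, so the bounded trajectories of \eqref{eq:system-1} remain in the region of smoothness.
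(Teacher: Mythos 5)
Your proposal is correct, but note what the paper actually does here: it does not prove this statement at all --- it imports it, ``in a modified form,'' from \cite[Theorem 3.21, Section 3.7]{hirsch2006monotone} and uses it as a black box in the proof of Theorem \ref{thm:ode}. What you have written is essentially a self-contained reconstruction of the classical argument behind that cited theorem: pass to the velocity $w(t)=\dot y(t)=f(y(t))$, observe that $w$ solves the variational equation $\dot w=A(t)w$ with $A(t)=Df(y(t))$ Metzler, prove positive invariance of the closed quadrants $\bar Q_{++}$ and $\bar Q_{--}$, and conclude by the planar connectedness dichotomy that each $w_i$ is eventually of one sign. All three steps are sound. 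Two small points are worth tightening but do not affect correctness. First, Nagumo-type invariance of a \emph{closed} set under a weak subtangential condition requires uniqueness of solutions; this is automatic here since the variational equation is linear with continuous coefficients, but it should be said, as otherwise one only gets viability (existence of some solution staying in the set). Second, the ``corner exclusion'' at the origin is unnecessary: the subtangential condition at $w=0$ is trivially satisfied because $A(t)\cdot 0=0$ lies in the tangent cone, and your separate backward-uniqueness argument ($w(t_0)=0$ forces $y$ constant) already disposes of that orbit entirely. What your route buys over the paper's citation is twofold: the statement becomes verifiable without consulting the monograph (useful since the paper invokes it in modified form), and your closing remark on regularity is exactly the hypothesis-check the paper leaves implicit --- $N(b,c)$ is $C^1$ for $c>0$ with the derivative formulas of Lemma \ref{lemma:N-der}, and the trajectory stays in $\{c>0\}$ by Lemma \ref{lemma-ODE-initial} (indeed eventually in $\{c\geq a_0/2\}$ by Proposition \ref{prop:ode-bound}), so the abstract theorem genuinely applies to \eqref{eq:system-1}. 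The conclusion you obtain is weak (non-strict) monotonicity of each component after some $t^*$, which is precisely what the paper's quoted statement asserts and all that the convergence argument in Section \ref{sc:ode-convergence} needs.
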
 By this theorem, $b(t)$ and $c(t)$ will be eventually monotone in time, since \eqref{eq:system-1} is a monotone system. Together with the upper and lower bounds, we deduce that $(b(t),c(t))$ will converge, monotonely in each component, to some point in $\mathbb{R}\times\mathbb{R}^+$ as $t$ goes to infinity. Then it is easy to verify that such point must be a steady state for \eqref{eq:system-1}. Therefore by uniqueness in Proposition \ref{prop:ode-steady}, it is the unique steady state $(b^*,c^*)$.
\end{proof}
An alternative approach to prove Theorem \ref{thm:ode} is to use Theorem 3.22 in \cite{hirsch2006monotone}, which states that for a cooperative system on a domain $D\subset\mathbb{R}^2$, the $\omega$ limit set contains a single equilibrium if the trajectory has a compact closure in $D$. In our case $D=\mathbb{R}\times\mathbb{R}^+$, and Proposition \ref{prop:ode-bound}(bounds for $b,c$ and positive lower bound for $c$) ensures that the closure of the trajectory is compact.

	\subsection{Long time behavior of the $v$-homogeneous problem: Proof of Theorem \ref{thm:v-homo-convergence}}\label{sc:3.2}

    With results on the ODE system \eqref{eq:system-1}, in this section we move on to prove Theorem \ref{thm:v-homo-convergence} for the $v$-homogeneous PDE \eqref{eq:reduce-p}. Specifically, we aim to show that when $g_1/V_F<1$, as time evolves a solution of \eqref{eq:reduce-p} converges to the unique steady state given by \eqref{steady-g}
	\begin{equation*}
	p^*(g)=\frac{1}{\sqrt{2\pi c^*}}\exp\left(-\frac{(g-b^*)^2}{2c^*}\right),
	\end{equation*}
	which is a Gaussian with mean $b^*$ and variance $c^*$. Here $(b^*,c^*)$ is the unique steady state of ODE \eqref{eq:system-1}.

    We work with the solution formula in Lemma \ref{lemma-explicit-solu}. First we observe that a general solution will converge to a time-varying Gaussian. Then we find an ODE structure for the solution of \eqref{eq:reduce-p}, which can be viewed as a perturbation of \eqref{eq:system-1}. Such a perturbation can be analyzed as an asymptotically autonomous system \cite{thieme1994asymptotically} (see also Appendix \ref{sc:asyauto}). 
    
	And as aforementioned, we use the notation $p(t,g):=p_0(t,g)$, since \eqref{eq:reduce-p} is a reduced model for \eqref{eq:nonlinear-toy}, 

	We start from the following semi-explicit solution formula of \eqref{eq:reduce-p}, which is the special case $k=0$ in Lemma \ref{lemma-explicit-solu}, given by
\begin{equation}\label{eq:ex-solu}
p(t,g)=(p_{t,0}*G_t)(g).
\end{equation}
Here as in \eqref{shrinkage}, $p_{t,0}$ is a shrinkage of the initial data $p_{\text{init}}(g)$, which is a probability density on $\mathbb{R}$,
\begin{equation}\label{pt0-sec32}
p_{t,0}(y)=e^{t}p_{\text{init}}(e^ty),\quad p(0,g)=p_{\text{init}}(g).
\end{equation}
Here in the case $k=0$, $G_t$ is simply a Gaussian which depends on the solution implicitly,
\begin{equation}
G_t(x):=\frac{1}{\sqrt{2\pi C(t)}}\exp\left(-\frac{(x-B(t))^2}{2C(t)}\right)
\end{equation}
And $B(t)$ and $C(t)$ depend on the firing rate as in \eqref{eq:BC}
\begin{equation}\label{eq:AB}
\begin{aligned}
B(t)&=\int_{0}^{t}e^{-(t-s)}g_{in}(s)ds=\int_{0}^{t}e^{-(t-s)}(g_0+g_1N(s))ds.\\
C(t)&=2\int_{0}^{t}e^{-2(t-s)}a(s)ds=2\int_{0}^{t}e^{-2(t-s)}(a_0+a_1N(s))ds.
\end{aligned}
\end{equation}

To see why we expect that a general solution converges to Gaussian, we can rewrite the solution formula \eqref{eq:ex-solu} as
\begin{equation}\label{eq-superpo-Gaussian}
p(t,g)=\int_{\mathbb{R}}\frac{1}{\sqrt{2\pi C(t)}}\exp\left(-\frac{(g-B(t)-y)^2}{2C(t)}\right)p_{t,0}(y)dy.
\end{equation}
This shows, at each time $t$, the solution can be understood as a superposition of a sequence of Gaussian with a same variance $C(t)$ and different means $B(t)+y$. Furthermore, since $p_{\text{init}}(y)$ is a probability density function, one observes that $p_{t,0}(y)=e^{t}p_{\text{init}}(e^ty)$ converges to the Dirac measure under some mild condition. With suitable uniform in $t$ bounds on $G_t$, one can show
\begin{equation}
\|p(t,\cdot)-G_t(\cdot)\|\rightarrow 0,\quad \text{as $t$ goes to infinity},
\end{equation} in some norm. This indicates that a general solution of \eqref{eq:reduce-p} can converge to a time-varying Gaussian. 

 Now we look into the dynamics of the mean and variance of $G_t$, i.e., $B(t),C(t)$. Differentiate \eqref{eq:AB} with respect to $t$, and one obtains,
\begin{equation}\label{dynamics-BC}
\begin{aligned}
\frac{dB(t)}{dt}&=g_0+g_1N(t)-B(t),\\
\frac{dC(t)}{dt}&=2(a_0+a_1N(t)-C(t)).
\end{aligned}
\end{equation}
This is very similar to the ODE system \eqref{eq:system-1}, except the determination of the firing rate $N(t)$. While in system \eqref{eq:system-1}, derived from an exact Gaussian solution, the firing rate is solely determined by mean $b(t),c(t)$ as a function $N(b(t),c(t))$, which is defined in \eqref{def-N-bc}. While in general case, we need to deal with a superposition of Gaussians with different means \eqref{eq-superpo-Gaussian} as follows,
\begin{align}\notag
N(t)&=\int_{\mathbb{R}}g_+p(t,g)dg=\int_{\mathbb{R}}g_+dg\int_{\mathbb{R}}G_t(g-y)p_{t,0}(y)dy\\
&=\int_{\mathbb{R}}p_{t,0}(y)dy\int_{\mathbb{R}}g_+G_t(g-y)dg\\&=\int_{\mathbb{R}}(N(B(t)+y,C(t)))p_{t,0}(y)dy.\label{v-homo-N}
\end{align} In the last line, we use that $G_t(\cdot-y)$ is a Gaussian with variance $C(t)$ and mean $B(t)+y$. As mentioned, we expect that $p_{t,0}(y)=e^{t}p_{\text{init}}(e^ty)$ converges to the Dirac measure at zero, then from \eqref{v-homo-N} we expect that $N(t)$ will converge to $N(B(t),C(t))$
as time evolves. This inspires us to reformulate \eqref{dynamics-BC} as a perturbation of \eqref{eq:system-1},
	\begin{equation}\label{eq:non-auto}
\begin{aligned}
\frac{dB(t)}{dt}&=g_0+g_1N(B,C)-B(t)+g_1\epsilon(t),\\
\frac{dC(t)}{dt}&=2a_0+2a_1N(B,C)-2C(t)+2a_1\epsilon(t).
\end{aligned}
\end{equation}
Here $\epsilon(t)=N(B(t),C(t))-N(t)$ is the ``error'' from using the ODE system to approximate \eqref{dynamics-BC}, which can be seen as a perturbation, and actually goes to zero as time evolves. Since by Lemma \ref{lemma:N-der} we have $|\frac{\p N}{\p b}|<\frac{1}{V_F}$, we estimate the error as follows,
\begin{align}\notag
|\epsilon(t)|=|N(t)-N(B(t),C(t))|&\leq\int_{\mathbb{R}}|N(B(t)+y,C(t))-N(B(t),C(t))|p_{t,0}(y)dy\\&\leq\frac{1}{V_F}\int_{\mathbb{R}}|y|p_{t,0}(y)dy=\frac{1}{V_F}e^{-t}\int_{\mathbb{R}}|y|p_{\text{init}}(y)dy.
\end{align}
We have assumed that the initial data has a finite first moment in  Theorem \ref{thm:v-homo-convergence}. Therefore the perturbation $\epsilon(t)$ exponentially decays \begin{equation}\label{eq:decay-eps}
    |\epsilon(t)|\leq Ce^{-t},
\end{equation} for some $C>0$. As a perturbation from the ODE \eqref{eq:system-1}, the long time behavior of \eqref{eq:non-auto} can be analyzed in the framework of asymptotically autonomous system in literature \cite{thieme1994asymptotically}. For reader's convenience, we summarize the precise definition and a property on asymptotically autonomous system in the Appendix \ref{sc:asyauto}.

\begin{proof}[Proof of Theorem \ref{thm:v-homo-convergence}]
Now we begin the proof of Theorem \ref{thm:v-homo-convergence}. We first show $(B(t),C(t))$ converges to $(b^*,c^*)$, the unique steady state of \eqref{eq:system-1}. This is from analyzing the system \eqref{eq:non-auto} in the framework of asymptotically autonomous system. By Theorem \ref{thm:auto} in Appendix, we can prove that $(B(t),C(t))$ converge to the unique equilibrium of \eqref{eq:system-1}, if we can show that the solution is uniformly bounded in time. 

For this boundedness of $(B(t),C(t))$, we adapt the proof of Proposition \ref{prop:ode-bound}. Actually, one can calculate the same Liapounov functional as in \eqref{tmp-deri-ode-2} and find that
\begin{align*}\label{tmp-pde}
    \frac{d}{dt}[\frac{1}{2}(B^2+\delta_0 C^2)]&\leq C-C_0(B(t)^2+C(t)^2)+Ce^{-t}(|B(t)|+|C(t)|)\\&\leq C+Ce^{-t}-\frac{C_0}{2}(B(t)^2+C(t)^2),
\end{align*}since by Young's inequality $|B|\leq \frac{1}{\delta}+\delta|B|^2,|C|\leq \frac{1}{\delta}+\delta|C|^2$ for all $\delta>0$. Then we derive that the solution is still uniformly bounded and therefore we deduce from Theorem \ref{thm:auto} that
\begin{equation}
    (B(t),C(t))\rightarrow(b^*,c^*),\quad \text{as $t$ goes to infinity}.
\end{equation}
Then by the triangle inequality, we get
\begin{equation}
||p(t,\cdot)-p^*(\cdot)||_{L^1(\mathbb{R})}\leq\|p^*(\cdot)-G_t(\cdot)\|_{L^1(\mathbb{R})}+\|G_t(\cdot)-p(t,\cdot)\|_{L^1(\mathbb{R})}.
\end{equation}
The first term is the difference between two Gaussians, and converges to zero as time evolves since $(B(t),C(t))$ converges to $(b^*,c^*)$. It remains to deal with the second term. Since $(B(t),C(t))$ converges to $(b^*,c^*)$ we can get uniform bounds on $\|\p_gG_t\|_{L^1(\mathbb{R})}$,
\begin{equation}\label{uni-Gt}
    \|\p_gG_t\|_{L^1(\mathbb{R})}=\int_{\mathbb{R}}\frac{|g-B(t)|}{C(t)}\frac{1}{\sqrt{2\pi C(t)}}\exp\left(-\frac{(g-B(t))^2}{2C(t)}\right)dg\leq C_1<\infty,
\end{equation}
for $t$ large. Then by the formula \eqref{eq:ex-solu}, as a standard estimate for an approximation to the identity, we deduce
\begin{align*}
\|p(t,\cdot)-G_t(\cdot)\|_{L^1({\mathbb{R}})}&=\|(p_{t,0}*G_t)(\cdot)-G_t(\cdot)\|_{L^1(\mathbb{R})}\\&\leq \int_{\mathbb{R}}\int_{\mathbb{R}}|G_t(g-y)-G(g)|p_{t,0}(y)dgdy\leq \int_{\mathbb{R}}\int_{\mathbb{R}}\left(\int_0^{1}|\p_gG_t(g-sy)|ds\right)|y|p_{t,0}(y)dgdy\\&=\int_{\mathbb{R}}\int_0^{1}dsdy|y|p_{t,0}(y)\int_{\mathbb{R}}|\p_gG_t(g-sy)|dg=\left(\int_{\mathbb{R}}|y|p_{t,0}(y)dy\right)\|\p_gG_t\|_{L^1(\mathbb{R})}.
\end{align*}
Finally, by the uniform bound on $\|\p_gG_t\|_{L^1(\mathbb{R})}$ \eqref{uni-Gt} and the definition of $p_{t,0}$ \eqref{pt0-sec32}, we conclude
\begin{align*}
    \|p(t,\cdot)-G_t(\cdot)\|_{L^1({\mathbb{R}})}&\leq \left(\int_{\mathbb{R}}|y|p_{t,0}(y)dy\right)\|\p_gG_t\|_{L^1(\mathbb{R})}\\&\leq C\left(\int_{\mathbb{R}}|y|p_{t,0}(y)dy\right)=Ce^{-t}\int_{\mathbb{R}}|y|p_{\text{init}}(y)dy\leq Ce^{-t}.
\end{align*}

\end{proof}

	\subsection{Long time behavior of the full model: Proof of Theorem \ref{thm:full-convergence}}\label{sc:longtime3.3}

	In this section we characterize the long time behavior of the full model \eqref{eq:nonlinear-toy}. Recall the Fourier expansion in $v$ \eqref{fourier-v}
	\begin{equation*}
\begin{aligned}
 p(t,v,g)&=\frac{1}{V_F}\sum_{k=-\infty}^{+\infty}p_k(t,g)e^{ikv\frac{2\pi}{V_F}},\\
p_k(t,g):&=\int_{0}^{V_F}p(t,v,g)e^{-ikv\frac{2\pi}{V_F}}dv,\quad k\in\mathbb{Z}.
\end{aligned}
	\end{equation*}
	We have shown that the solution $p(t,v,g)$ converges to its zeroth mode $\frac{1}{V_F}p_0(t,g)$ in Theorem \ref{thm:decay-v}. Then in Theorem \ref{thm:v-homo-convergence} we give the long time asymptotics for the reduced model when there is only the zeroth mode. However, the long time behavior of the full model \eqref{eq:nonlinear-toy} is not a straightforward consequence. Here is the difficulty -- we need to control the contributions to the firing rate $N(t)$ from non-zero modes, for which the convergence in Theorem \ref{thm:decay-v} is not sufficient. This point will be elaborated throughout this section.

	In terms of the Fourier modes in $v$, for $t>0$, the firing rate $N(t)$ can be represented as
	\begin{align}\label{tmp-sum-N-0}
	    N(t)&=\int_{0}^{\infty}gp(t,V_F,g)dg=\frac{1}{V_F}\int_{0}^{\infty}g \left(\sum_{k=-\infty}^{+\infty}p_k(t,g)\right)dg.\\
	    &=\sum_{k=-\infty}^{+\infty}\frac{1}{V_F}\int_{0}^{\infty}gp_k(t,g)dg=\sum_{k=-\infty}^{+\infty}N_k(t),\label{tmp-sum-N}
	\end{align} where we define 
	\begin{equation}
	    N_k(t):=\frac{1}{V_F}\int_{0}^{\infty}gp_k(t,g)dg,\quad k\in\mathbb{Z},
	\end{equation} to be the contribution to the firing rate $N(t)$ from the $k$-th mode. Sufficient decay of $N_k(t)$ in $k$ will be shown later which justifies the exchange of sum and integral from \eqref{tmp-sum-N-0} to \eqref{tmp-sum-N}. 
	
	We aim to control those extra $N_k$ terms and to adapt the framework in Section \ref{sc:3.2}.
	
	Let us recall the structure of solutions as in Lemma \ref{lemma-explicit-solu}. We rewrite the formula for each mode \eqref{explicit——k}, explicitly plugging out the decay factor,
	\begin{equation}\label{explicit-k-2}
	    	p_k(t,g)=e^{-k^2\pvf^2D(t)}e^{ik(\frac{2\pi}{V_F})g}(p_{t,k}\ast \bar{G}_{t,k})(g).
	\end{equation} And recall that $\bar{G}_{t,k}$ is a Gaussian multiplied by a phase factor \eqref{def-barG},
	\begin{align}\notag
	    \bar{G}_{t,k}(z)&=\frac{1}{\sqrt{2\pi C(t)}}\exp\left(-\frac{(z-B(t))^2}{2C(t)}+ik\frac{2\pi}{V_F}\myBtwo(t,z)\right)\\
	    &=\frac{1}{\sqrt{2\pi C(t)}}\exp\left(-\frac{(z-B(t))^2}{2C(t)}-ik\pvf(z-B(t))\frac{\int_{0}^{t}e^{s-t}a(s)ds}{\int_{0}^{t}e^{2(s-t)}a(s)ds}-ik\pvf\int_{0}^{t}g_{in}(s)\right),\label{bar-G-full}
	\end{align} where we plug in \eqref{def-B2}, the definition of $\myBtwo$.
	
	Then we examine the dynamics of $B(t),C(t)$, similarly to Section \ref{sc:3.2}. Recall the definition of $B(t),C(t)$ \eqref{eq:BC}
		\begin{equation*}
	\begin{aligned}
		B(t)=\int_{0}^{t}e^{-(t-s)}g_{in}(s)ds=\int_{0}^{t}e^{-(t-s)}(g_0+g_1N(s))ds.\\
	C(t)=2\int_{0}^{t}e^{-2(t-s)}a(s)ds=2\int_{0}^{t}e^{-2(t-s)}(a_0+a_1N(s))ds.
	\end{aligned}
	\end{equation*} And we take derivatives w.r.t $t$ to find that $B(t),C(t)$ satisfies a similar ODE system as in \eqref{dynamics-BC}, which we can rewrite as a perturbation of \eqref{eq:system-1},
	\begin{equation}\label{dynamics-BC-vg}
\begin{aligned}
\frac{dB(t)}{dt}&=g_0+g_1N(t)-B(t)=g_0+g_1N(B,C)-B+g_1\eps(t),\\
\frac{dC(t)}{dt}&=2(a_0+a_1N(t)-C(t))=2(a_0+a_1N(B,C)-C(t))+2a_1\eps(t),
\end{aligned}
\end{equation} where $\eps(t)$ is defined as
\begin{equation}\label{def-eps-vg}
    \eps(t):=N(t)-N(B(t),C(t))=\sum_{k=-\infty}^{+\infty}N_k(t)-N(B(t),C(t)).
\end{equation}Here $N(B(t),C(t))$, defined in \eqref{def-N-bc}, is the firing rate of a Gaussian with mean $B(t)$ and variance $C(t)$.

The estimate of $\eps(t)$ is the key. Here the challenge is that it is difficult to show, at least directly, that $\eps(t)$ converges to zero as time evolves, in contrast to the exponential decay \eqref{eq:decay-eps} in Section \ref{sc:3.2}. The trick, is to first derive a control involving the firing rate $N(B(t),C(t))$, which is stated in the following Proposition \ref{prop:eps-v}. 
\begin{proposition}\label{prop:eps-v}
For the voltage-conductance model \eqref{eq:nonlinear-toy}, $\eps(t)$ defined in \eqref{def-eps-vg} satisfies the following estimate for $t\geq 1$,
    \begin{align}\label{key-eps}
        |\eps(t)|\leq Ce^{-d^*t}\bigl(1+N(B(t),C(t))\bigr),\quad t\geq 1,
    \end{align}where $d^*=\min(1,\pvf^2a_0)>0$.
\end{proposition}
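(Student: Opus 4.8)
The plan is to split $\eps(t)$ according to the Fourier modes, isolating the zeroth mode from the rest:
\[
\eps(t) = \bigl(N_0(t) - N(B(t),C(t))\bigr) + \sum_{k\neq 0} N_k(t),
\]
and to estimate the two pieces separately. The first bracket is precisely the error already controlled in Section \ref{sc:3.2}: since $N_0(t)=\int_{\mathbb{R}} N(B(t)+y,C(t))\,p_{t,0}(y)\,dy$ by the superposition identity \eqref{v-homo-N}, and $p_{t,0}$ is a probability density concentrating at the origin, the Lipschitz bound $|\p_b N|\le 1/V_F$ from \eqref{eq:mono} yields
\[
|N_0(t)-N(B(t),C(t))|\le \tfrac{1}{V_F}\int_{\mathbb{R}}|y|\,p_{t,0}(y)\,dy=\tfrac{1}{V_F}e^{-t}\int_{\mathbb{R}}|y|\,p_{0,0}(y)\,dy\le Ce^{-t},
\]
the last integral being finite by the first-moment assumption on $p_{\init}$ (recall $p_{0,0}(g)=\int_0^{V_F}p_{\init}(v,g)\,dv$).

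The heart of the argument is the tail $\sum_{k\neq0}N_k(t)$, and the difficulty is that $N_k(t)=\frac{1}{V_F}\int_0^\infty g\,p_k(t,g)\,dg$ is a \emph{first moment} in $g$, so the plain $L^1$ decay of modes from Theorem \ref{thm:decay-v} is not enough. The trick I would use is to bound $|N_k(t)|$ not by a uniform constant but by $N_0(t)$ itself times the diffusive decay factor. Starting from \eqref{explicit-k-2}, factor out the positive constant $e^{-k^2\pvf^2 D(t)}$, bound the unimodular phase $e^{ik\pvf g}$ by $1$, and pass the modulus through the convolution: since $|\bar{G}_{t,k}|=\bar{G}_{t,0}$ (the phase $\myBtwo$ is real-valued, see \eqref{bar-G-full}) and $|p_{t,k}(y)|\le p_{t,0}(y)$ pointwise (because $|p_{0,k}|\le p_{0,0}$ by \eqref{shrinkage}), one obtains $\bigl|(p_{t,k}\ast\bar{G}_{t,k})(g)\bigr|\le (p_{t,0}\ast\bar{G}_{t,0})(g)$. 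Integrating against $g\ge 0$ over $(0,\infty)$ and recognizing $\frac{1}{V_F}\int_0^\infty g\,(p_{t,0}\ast\bar{G}_{t,0})(g)\,dg=N_0(t)$ gives the clean estimate
\[
|N_k(t)|\le e^{-k^2\pvf^2 D(t)}\,N_0(t),\qquad k\in\mathbb{Z}.
\]

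Summing over $k\neq0$ and invoking the lower bound $D(t)\ge a_0\bigl(t-2\frac{e^t-1}{e^t+1}\bigr)\ge a_0(t-2)$ from Lemma \ref{lemma-D(t)}, the series is dominated by its $k=\pm1$ terms for $t\ge1$ (exactly as in the proof of Theorem \ref{thm:decay-v}), so $\sum_{k\neq0}|N_k(t)|\le C\,N_0(t)\,e^{-\pvf^2 a_0 t}$. I would then recycle the zeroth-mode estimate to remove $N_0(t)$: from $|N_0(t)-N(B(t),C(t))|\le Ce^{-t}$ we get $N_0(t)\le C\bigl(1+N(B(t),C(t))\bigr)$ for $t\ge1$, whence the tail is bounded by $Ce^{-\pvf^2 a_0 t}\bigl(1+N(B(t),C(t))\bigr)$. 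Combining both pieces and using $e^{-t}\le e^{-d^*t}$ and $e^{-\pvf^2 a_0 t}\le e^{-d^*t}$ with $d^*=\min(1,\pvf^2 a_0)$ gives \eqref{key-eps}. The same modewise bound also justifies a posteriori the interchange of sum and integral claimed between \eqref{tmp-sum-N-0} and \eqref{tmp-sum-N}.

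The main obstacle is exactly the moment issue in the second step: one cannot afford to bound $\int_0^\infty g\,|p_k|\,dg$ by anything uniform in $k$ and $t$, so the self-referential estimate $|N_k|\le e^{-k^2\pvf^2 D(t)}N_0$ — which trades the dangerous first moment of the $k$-th mode for the already-controlled first moment of the zeroth mode, before closing the loop through $N(B(t),C(t))$ — is the crux. Everything else is routine bookkeeping: verifying $|p_{0,k}|\le p_{0,0}$, checking $|\bar{G}_{t,k}|=\bar{G}_{t,0}$, and summing the Gaussian-in-$k$ series.
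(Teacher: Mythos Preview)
Your proof is correct and follows the same decomposition and strategy as the paper's own proof. The only difference is a small simplification in the tail estimate: you exploit the \emph{pointwise} domination $|p_{0,k}|\le p_{0,0}$ (valid since $p_{\init}\ge0$) to get $|N_k(t)|\le e^{-k^2\pvf^2 D(t)}N_0(t)$ in one stroke, whereas the paper uses only the weaker $L^1$ bound $\|p_{0,k}\|_{L^1}\le 1$ and therefore has to repeat the Lipschitz comparison $|N(B+y,C)-N(B,C)|\le |y|/V_F$ inside each nonzero mode before arriving at the same conclusion --- your route is marginally cleaner but the arguments are otherwise identical.
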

Combine Proposition \ref{prop:eps-v} with the dynamics of $(B(t),C(t))$ in \eqref{dynamics-BC-vg}, we can eventually show $\eps(t)$ goes to zero when $g_1/V_F<1$. This is different from Section \ref{sc:3.2}, where we prove the exponential decay of \eqref{eq:decay-eps} without resorting to the dynamics of $(B(t),C(t))$. Actually, in that case $\eps(t)$ goes to zero even in the case $g_1/V_F\geq1$, when the firing rate $N(t)$ itself diverges to infinity.

\begin{proof}[Proof of Proposition \ref{prop:eps-v}]
We split $|\eps(t)|$ into two parts as follows,
\begin{equation}\label{p5-1-pf}
    |\eps(t)|\leq  |N_0(t)-N(B(t),C(t))|+\sum_{k=-\infty,k\neq0}^{k=+\infty}|N_k(t)|.
\end{equation} 
The estimate of the first term in \eqref{p5-1-pf}, $|N_0(t)-N(B(t),C(t))|$ is similar to that in Section \ref{sc:3.2}. Using the explicit formula \eqref{explicit-k-2}, we write
\begin{align*}
    N_0(t)&=\frac{1}{V_F}\int_{\mathbb{R}}g_+p_0(t,g)dg=\frac{1}{V_F}\int_{\mathbb{R}}g_+\int_{\mathbb{R}}p_{t,0}(y)\bar{G}_{t,0}(g-y)dy\\&=\int_{\mathbb{R}}p_{t,0}(y)dy\int_{\mathbb{R}}\frac{1}{V_F}g_+\bar{G}_{t,0}(g-y)dg\\&=\int_{\mathbb{R}}p_{t,0}(y)N(B(t)+y,C(t))dy.
\end{align*} In the lase line, we use that $G_{t,0}(\cdot-y)$ is a Gaussian with variance $C(t)$ and mean $B(t)+y$ from the formula \eqref{bar-G-full}. On the other hand recall that \eqref{shrinkage} $p_{t,0}(y)=e^tp_{0,0}(e^ty)$ is a shrinkage of $p_{0,0}(y)$, and note that $p_{0,0}$ is a probability density from
\begin{equation*}
    \int_{\mathbb{R}}p_{0,0}(y)dy=\int_{\mathbb{R}}dy\int_{0}^{V_F}p_{\text{init}}(v,y)dv=1.
\end{equation*} Therefore we also expect $p_{t,0}$ goes to the Dirac measure. We proceed to estimate $|N_0(t)-N(B(t),C(t))|$ following the strategy in Section \ref{sc:3.2}: 
\begin{align*}\notag
|N_0(t)-N(B(t),C(t))|&=\left|\int_{\mathbb{R}}(N(B(t)+y,C(t))-N(B(t),C(t))p_{t,0}(y)dy\right|\\&\leq\int_{\mathbb{R}}|N(B(t)+y,C(t))-N(B(t),C(t)|p_{t,0}(y)dy.
\end{align*} Using the estimate on $\frac{\p N}{\p b}$ in Lemma \ref{lemma:N-der}, we get
\begin{align}\notag
    |N_0(t)-N(B(t),C(t))|&\leq\frac{1}{V_F}\int_{\mathbb{R}}|y|p_{t,0}(y)dy=\frac{1}{V_F}e^{-t}\int_{\mathbb{R}}|y|p_{0,0}(y)dy\\&=\frac{1}{V_F}e^{-t}\int_{\mathbb{R}}\int_{0}^{V_F}|y|p_{\text{init}}(v,y)dvdy\leq Ce^{-t}\label{p5-2-pf}
\end{align}

Now it remains to estimate the second term $\sum_{k=-\infty,k\neq0}^{k=+\infty}|N_k(t)|$ in \eqref{p5-1-pf}, and we shall use the decay of non-zero modes. First, by the explicit formula \eqref{explicit-k-2}, we have
\begin{align*}
    V_FN_k(t)&=\int_{\mathbb{R}}g_+p_{k}(t,g)dg=e^{-k^2\pvf^2D(t)}\int_{\mathbb{R}}g_+dg\int_{\mathbb{R}}p_{t,k}(y)e^{ik(\frac{2\pi}{V_F})g}\bar{G}_{t,k}(g-y)dy\\
    &=e^{-k^2\pvf^2D(t)}\int_{\mathbb{R}}dyp_{t,k}(y)\int_{\mathbb{R}}e^{ik(\frac{2\pi}{V_F})g}g_+\bar{G}_{t,k}(g-y)dg.
    \end{align*}Therefore
    \begin{align}\notag
       e^{k^2\pvf^2D(t)} |V_FN_k(t)|&\leq \int_{\mathbb{R}}dy|p_{t,k}(y)|\int_{\mathbb{R}}|e^{ik(\frac{2\pi}{V_F})g}g_+\bar{G}_{t,k}(g-y)|dg\\\notag&=\int_{\mathbb{R}}dy|p_{t,k}(y)|\int_{\mathbb{R}}|g_+\bar{G}_{t,k}(g-y)|dg\\&=\int_{\mathbb{R}}dy|p_{t,k}(y)|\int_{\mathbb{R}}g_+\bar{G}_{t,0}(g-y)dg.\label{p5-tmp1-pf}
    \end{align} In the last line we use $|\bar{G}_{t,k}|=\bar{G}_{t,0}$ from \eqref{bar-G-full}. Now we can rewrite \eqref{p5-tmp1-pf} in terms of $N(b,c)$ as our treatment with $N_0$,
    \begin{align}
       e^{k^2\pvf^2D(t)} |V_FN_k(t)|\leq \int_{\mathbb{R}}V_FN(B(t)+y,C(t))|p_{t,k}(y)|dy.\label{p5-tmp2-pf}
    \end{align} 
    On the other hand, recall again \eqref{shrinkage} $p_{t,k}(y)=e^tp_{0,k}(e^ty)$, therefore $p_{t,k}$ and $p_{0,k}$ have a same $L^1$ norm, which we denote as $c_k$. By the definition of $p_{0,k}$ \eqref{shrinkage}, $c_k$ is less than one:
    \begin{align}\notag
        c_k&:=\int_{\mathbb{R}}|p_{0,k}(y)|dy=\int_{\mathbb{R}}dy\left|e^{-ik\frac{2\pi}{V_F}g}\int_{0}^{V_F}p_{\text{init}}(v,g)e^{-ikv\frac{2\pi}{V_F}}dv.\right|\\&\leq\int_{\mathbb{R}}\int_{0}^{V_F}|p_{\text{init}}(v,g)|dvdg=1.
    \end{align} Then we continue the estimate in \eqref{p5-tmp2-pf}, by comparing with $c_kN(B(t),C(t))$
    \begin{align}\notag
               e^{k^2\pvf^2D(t)} |V_FN_k(t)|&\leq \int_{\mathbb{R}}V_F|(N(B(t)+y,C(t))-N(B(t),C(t)))p_{t,k}(y)|dy+c_kV_FN(B(t),C(t))\\&\leq \int_{\mathbb{R}}|yp_{t,k}(y)|dy+c_kV_FN(B(t),C(t)),\label{p5-tmp3-pf}
    \end{align} where we use Lemma \ref{lemma:N-der} the bound on $\frac{\p N}{\p b}$ again. For the first term in \eqref{p5-tmp3-pf}, we estimate in the same way as for $N_0$ in \eqref{p5-2-pf},
    \begin{align*}
        \int_{\mathbb{R}}|yp_{t,k}(y)|dy&=e^{-t}\int_{\mathbb{R}}|yp_{0,k}(y)|dy\\&\leq e^{-t}\int_{\mathbb{R}}\int_{0}^{V_F}|y|p_{\text{init}}(v,y)dvdy.
    \end{align*} Combine this with \eqref{p5-tmp3-pf}, we deduce
    \begin{align*}
        e^{k^2\pvf^2D(t)} |V_FN_k(t)|&\leq Ce^{-t}+c_kV_FN(B(t),C(t))\\ &\leq C+c_kV_FN(B(t),C(t)).
    \end{align*}
    Therefore we get 
    \begin{equation}\label{p5-p3-1-pf}
        |N_k(t)|\leq Ce^{-k^2\pvf^2D(t)}(1+N(B(t),C(t))).
    \end{equation}
    Sum \eqref{p5-p3-1-pf} up for $k\neq0$, we have
    \begin{align}\notag
        \sum_{k=-\infty,k\neq0}^{k=+\infty}|N_k(t)|&\leq C\sum_{k=-\infty,k\neq0}^{k=+\infty}e^{-k^2\pvf^2D(t)}(1+N(B(t),C(t)))\\
        &\leq C\sum_{k=1}^{k=+\infty}e^{-k\pvf^2D(t)}(1+N(B(t),C(t)).\label{p5-tmp4-pf}
    \end{align} By $D(t)\geq a_0(t-\frac{e^t-1}{e^t+1})$ in Lemma \ref{lemma-D(t)}, we have $D(t)\geq a_0(1-\frac{e-1}{e+1})>0$ for $t\geq 1$. Therefore for $t\geq 1$ we obtain from \eqref{p5-tmp4-pf}
    \begin{align}\notag
        \sum_{k=-\infty,k\neq0}^{k=+\infty}|N_k(t)|&\leq C\frac{e^{-\pvf^2D(t)}}{1-e^{-\pvf^2D(t)}}(1+N(B(t),C(t))\\&\leq C{e^{-\pvf^2D(t)}}(1+N(B(t),C(t)),\quad t\geq 1.\notag
    \end{align}By Lemma \ref{lemma-D(t)} again we deduce
    \begin{align}\label{p5-p3-2-pf}
        \sum_{k=-\infty,k\neq0}^{k=+\infty}|N_k(t)|\leq C{e^{-\pvf^2a_0t}}(1+N(B(t),C(t))),\quad t\geq 1.
    \end{align}
    
    Plugging \eqref{p5-p3-2-pf} and the estimate for $N_0$ in \eqref{p5-2-pf} into \eqref{p5-1-pf}, we finally conclude our estimate on $\eps(t)$,
	\begin{align}\notag
		    |\eps(t)|&\leq  |N_0(t)-N(B(t),C(t))|+\sum_{k=-\infty,k\neq0}^{k=+\infty}|N_k(t)|\\&\leq Ce^{-t}+C{e^{-\pvf^2a_0t}}(1+N(B(t),C(t))\notag
		    \\&\leq Ce^{-d^*t}(1+N(B(t),C(t))),\quad t\geq 1,
	\end{align}where $d^*=\min(1,\pvf^2a_0)$.
		\end{proof}
		Now we begin the proof of Theorem \ref{thm:full-convergence}. First we prove that when $0<g_1/V_F<1$, the solution converges to the unique steady state.
	\begin{proof}[Proof of Theorem \ref{thm:full-convergence}, Case $0<g_1/V_F< 1$]
	WLOG, we consider the case $V_F=1$ and $0<g_1<1$, otherwise we rescale. As in Section \ref{sc:3.2}, we shall first show that $(B(t),C(t))$ is uniformly bounded. Recall the dynamics of $(B(t),C(t))$ in \eqref{dynamics-BC-vg}
	\begin{equation*}
\begin{aligned}
\frac{dB(t)}{dt}&=g_0+g_1N(t)-B(t)=g_0+g_1N(B,C)-B+g_1\eps(t),\\
\frac{dC(t)}{dt}&=2(a_0+a_1N(t)-C(t))=2(a_0+a_1N(B,C)-C(t))+2a_1\eps(t),
\end{aligned}
\end{equation*} 
	 By Proposition \ref{prop:eps-v} and $0\leq N(b,c)\leq b_++\sqrt{c}$ in Lemma \ref{lemma-bound-N}, we deduce for $t\geq 1$
	\begin{align}\notag
	    |\eps(t)|&\leq Ce^{-d^*t}(1+N(B,C))\\&\leq Ce^{-d^*t}(1+|B|+\sqrt{C}).\label{eps-bound-2}
	\end{align} Following the same calculation for Liapounov functional as in \eqref{tmp-deri-ode-2} we obtain
	\begin{align}\label{tmp4-tmp-deri-ode-2}
    \frac{d}{dt}[\frac{1}{2}(B^2+\delta_0 C^2)]&\leq C-C_0(B(t)^2+C(t)^2)+C\eps(t)(|B(t)|+|C(t)|)
\end{align} 
Using \eqref{eps-bound-2} and Young's inequality we get for $t\geq 1$,
\begin{align*}
    \eps(t)(|B(t)|+|C(t)|)&\leq Ce^{-d^*t}(1+|B(t)|+\sqrt{C(t)})(|B(t)|+|C(t)|)\\&\leq Ce^{-d^*t}+Ce^{-d^*t}(B(t)^2+C(t)^2).
\end{align*}Plug this in \eqref{tmp4-tmp-deri-ode-2}, and we have 
\begin{equation}
     \frac{d}{dt}[\frac{1}{2}(B^2+\delta_0 C^2)]\leq C-(C_0-Ce^{-d^*t})(B(t)^2+C(t)^2).
\end{equation} Therefore we derive that $(B(t),C(t))$ is uniformly bounded in time. Then using \eqref{eps-bound-2} again, we get that $\eps(t)$ goes to zero exponentially. Now applying Theorem \ref{thm:auto} to the asymptotically autonomous system \eqref{dynamics-BC-vg}, we conclude that $(B(t),C(t))$ converges to $(b^*,c^*)$ as time evolves.

Now we estimate the distance between the solution and the steady state. By triangle inequality we have
\begin{align*}
\|p(t,v,g)-\frac{1}{V_F}\frac{1}{\sqrt{2\pi c^*}}e^{-\frac{(g-b^*)^2}{2c^*}}\|_{L^1((0,V_F)\times\mathbb{R})}&\leq \|p(t,v,g)-\frac{1}{V_F}p_0(t,g)\|_{L^1((0,V_F)\times\mathbb{R})}\\&+
 \|\frac{1}{V_F}G_{t,0}(g)-\frac{1}{V_F}p_0(t,g)\|_{L^1((0,V_F)\times\mathbb{R})}\\&+ \|\frac{1}{V_F}G_{t,0}(g)-\frac{1}{V_F}\frac{1}{\sqrt{2\pi c^*}}e^{-\frac{(g-b^*)^2}{2c^*}}\|_{L^1((0,V_F)\times\mathbb{R})}.
\end{align*}
The first terms goes to zero by Theorem \ref{thm:decay-v}, the last term goes to zero since we have shown the convergence of $(B(t),C(t))$ to $(b^*,c^*)$. For the middle term, it is the same as in the last step of the proof in Theorem \ref{thm:v-homo-convergence}. Actually recall \eqref{fourier-v} that the initial value of zero order mode
\begin{equation*}
    p_{0,0}(g)=\int_{0}^{V_F}p_{\text{init}}(v,g)dv,\quad 
\end{equation*} is non-negative with unit mass $\int_{\mathbb{R}}p_{0,0}(g)dg=\int_{\mathbb{R}}\int_{0}^{V_F}p_{\text{init}}(v,g)dvdg=1$.
Then we can estimate in the same way as Section \ref{sc:3.2}
\begin{align*}
\|\frac{1}{V_F}G_{t,0}(g)-\frac{1}{V_F}p_0(t,g)\|_{L^1((0,V_F)\times\mathbb{R})}&=\|p(t,\cdot)-G_{t,0}(\cdot)\|_{L^1({\mathbb{R}})}\\
&=\|(p_{t,0}*G_{t,0})(\cdot)-G_{t,0}(\cdot)\|_{L^1(\mathbb{R})}\\
&\leq\|\p_{g}G_{t,0}\|_{L^1(\mathbb{R})} \int_{\mathbb{R}}|y|p_{t,0}(y)dy=e^{-t}\|\p_{g}G_{t,0}\|_{L^1(\mathbb{R})}\int_{\mathbb{R}}|y|p_{0,0}(y)dy\\&\leq e^{-t}C.
\end{align*}
In the last inequality, we use a uniform bound on $\|\p_gG_{t,0}\|_{L^1(\mathbb{R})}$ implied by the convergence of $(B(t),C(t))$, as in the proof of Theorem \ref{thm:v-homo-convergence}.
\end{proof}
	
	Next, we treat the case $g_1/V_F\geq 1$ and prove the firing rate $N(t)$ of \eqref{eq:nonlinear-toy} diverges to infinity as time evolves. The key estimate is also Proposition \ref{prop:eps-v}.
	
	\begin{proof}[Proof of Theorem \ref{thm:full-convergence}, Case $g_1/V_F\geq 1$]
WLOG we consider the case $V_F=1$ and $g_1\geq 1$. By Proposition \ref{prop:eps-v} for $t\geq 1$ we have $|N(t)-N(B,C)|\leq Ce^{-d^*t}(1+N(B(t),C(t)))$, therefore
\begin{equation*}
    N(t)\geq (1-Ce^{-d^*t})N(B(t),C(t))-Ce^{-d^*t},\quad t\geq 1,
\end{equation*} and it suffices to show that $N(B,C)$ goes to infinity. Since $N(B(t),C(t))\geq \frac{1}{V_F}B(t)$ by Lemma \ref{lemma-bound-N}, it suffices to show that $B(t)$ goes to infinity. Recall the dynamics of $B(t)$ \eqref{dynamics-BC-vg},
\begin{align*}
    \frac{dB(t)}{dt}&=g_0+g_1N(B,C)-B(t)+\eps(t)\\&\geq g_0+g_1N(B,C)-B(t)-Ce^{-d^*t}-Ce^{-d^*t}N(B,C).
\end{align*}If $g_1>1$, then there exists $T$ such that for $t\geq T$, $Ce^{-d^*t}<\frac{g_1-1}{2}$ and $Ce^{-d^*t}<\frac{g_0}{2}$, then
\begin{align*}
    \frac{dB(t)}{dt}\geq \frac{1}{2}g_0+\frac{g_1-1}{2}N(B,C)\geq  \frac{1}{2}g_0+\frac{g_1-1}{2}B(t),
\end{align*} which implies $B(t)$ goes exponentially to infinity by Gronwall's inequality.

When $g_1=1$ we shall follow the $\eps$-$\delta$ definition of the limit. For every $0<r<\min(g_1,g_0)$, there exists $T=T(r)$ such that $Ce^{-d^*t}<r$ for $t\geq T$. Therefore for $t\geq T$
\begin{align*}
    \frac{dB(t)}{dt}&\geq (g_0-r)+(1-r)N(B,C)-B(t)\\&\geq (g_0-r)-rB(t).
\end{align*} Then we deduce that there exists $T_1=T_1(r)$ such that for all $t\geq T_1(r)$, $B(t)\geq \frac{1}{2}\frac{g_0-r}{r}$. 

In summary, for every $0<r<\min(g_1,g_0)$ there exists $T_1=T_1(r)$ such that $B(t)\geq \frac{1}{2}\frac{g_0-r}{r}$ for $t\geq T_1(r)$. Therefore by the the $\eps$-$\delta$ definition of the limit, we conclude
\begin{equation}
    \lim_{t\rightarrow\infty}B(t)=+\infty.
\end{equation}
\end{proof}

	\section{The fast conductance limit}\label{sc:4fcl}
	We introduce a timescale ratio parameter $\veps>0$ into the model \eqref{eq:nonlinear-toy}, which is the ratio of the timescale of conductance with respect to the timescale of voltage. Precisely we consider the following model
	\begin{equation}\label{eq:nonlinear-toy-veps}
\p_t\pe+g\p_v\pe=\frac{1}{\veps}(\p_g((-g^{\veps}_{\text{in}}(t)+g)\pe+a^{\veps}(t)\p_g\pe)), \quad v\in(0,V_F),\ g\in\mathbb{R},\ t>0.\end{equation} whose boundary condition, initial value and definitions of $a^{\veps}(t),g_{\text{in}}^{\veps}(t),N^{\veps}(t)$ are the same as in \eqref{nonlinear-toy-bc},\eqref{nonlinear-toy-IC} and \eqref{eq:para-general} for the simplified model \eqref{eq:nonlinear-toy}. Here we just use the superscript $\veps$ to stress the dependence on $\veps$.
	
	In this section, we study the effect of the time ratio parameter $\veps$ and the fast conductance limit $\veps\rightarrow0^+$. We derive a limit model and analyze its behavior. Our motivation is twofold. Physically, the timescale of conductance $g$ is much smaller than that of the voltage $v$, which implies that $\veps$ is very small. This motives the study of the effect of a small $\veps$ and the limit $\veps\rightarrow0^+$. Moreover, the fast conductance limit links the kinetic model \eqref{eq:nonlinear-toy} to a ``macro'' model with describes the voltage $v$ only. This provides an another way of model reduction, in contrast to our analysis in Section \ref{sec:2} and \ref{sc:3longtime}, where we reduce the dynamics to $g$ direction only, motivated by the long time behavior  -- the convergence to $v$-homogeneous problem in Theorem \ref{thm:decay-v}. Such an alternative way of model reduction may provide new insights, especially on periodic solutions, of the original model \eqref{eq:original}. 
	
		Let's derive the fast conductance limit model formally first. Consider $\veps\rightarrow0^+$ in \eqref{eq:nonlinear-toy-veps}. We collect the $O(\frac{1}{\veps})$ terms and get the following ``quasi-steady'' equation in $g$ direction
\begin{equation}\label{quasi-steady}
\p_g[(g-g_{\text{in}}(t))p-a(t)\p_gp]=0,\quad t>0,v\in(0,V_F),g\in\mathbb{R}.
\end{equation}Solving \eqref{quasi-steady} we deduce that for every fixed $v$, the profile in the $g$ direction is a Gaussian whose mean is $g_{\text{in}}(t)$ and variance is $a(t)$. Therefore for some $\rho(t,v)$ we writes 
\begin{equation}\label{der-fcl-1}
    p(t,v,g)=\rho(t,v)\mathcal{G}(g;g_{\text{in}}(t),a(t)).
\end{equation} Here we use the following notation for Gaussian: $\mathcal{G}(g;g_{\text{in}}(t),a(t))=\frac{1}{\sqrt{2\pi a(t)}}e^{-\frac{(g-g_{\text{in}}(t))^2}{2a(t)}}$. Actually, $\rho(t,v)$ is the marginal density in $v$, and therefore a probability density on $[0,V_F]$, which can be checked by integrating \eqref{der-fcl-1} over $[0,V_F]\times\mathbb{R}$.

Then we plug \eqref{der-fcl-1} into $O(1)$ terms of \eqref{eq:nonlinear-toy-veps}, and obtain
\begin{equation}\label{deri-formal-fcl-1}
    \p_t(\rho(t,v)\mathcal{G}(g;g_{\text{in}}(t),a(t)))+\left(g\mathcal{G}(g;g_{\text{in}}(t),a(t))\right)\p_v\rho=0.
\end{equation}
Integrating \eqref{deri-formal-fcl-1} in $g$ on $\mathbb{R}$, using the zeroth and first moment of $\mathcal{G}$ 

$$\int_{\mathbb{R}}\mathcal{G}(g;g_{\text{in}}(t),a(t))dg=1,\quad\int_{\mathbb{R}}g\mathcal{G}(g;g_{\text{in}}(t),a(t))dg=g_{\text{in}}(t),$$ we deduce a transport equation for $\rho$
\begin{equation}\label{eq:fcl}
        \p_t\rho+g_{\text{in}}(t)\p_v\rho=0,\quad v\in(0,V_F),t>0.
\end{equation}

For the boundary condition, we similarly integrate the boundary condition \eqref{nonlinear-toy-bc} in $g$  and get
\begin{equation}\label{eq:fcl-bc}
    \rho(t,0)=\rho(t,V_F),\quad t>0.
\end{equation}

Finally we derive a formula of $N(t)$ in terms of $\rho$, by which we can represent $g_{\text{in}}(t)$ in \eqref{eq:fcl}. Plugging \eqref{der-fcl-1} into the definition of firing rate \eqref{eq:nonliner-toy-fire}, we obtain
\begin{align}\notag
N(t)&=\int_{0}^{+\infty}gp(t,V_F,g)dg\\\notag&=\rho(t,V_F)\int_{0}^{+\infty}g\mathcal{G}(g;g_{\text{in}}(t),a(t))dg\\&=\rho(t,V_F)\mathscr{N}(g_{\text{in}}(t),a(t)),
\end{align}where the function $\mathscr{N}(b,c)$ is defined as 
\begin{equation}\label{def-fcl-N}
    \mathscr{N}(b,c):=\int_{0}^{+\infty}g\mathcal{G}(g;b,c)dg=\int_{0}^{+\infty}g\frac{1}{\sqrt{2\pi c}}e^{-\frac{(g-b)^2}{2c}}dg,\quad b\in\mathbb{R},c>0.
\end{equation}
Since $g_{\text{in}}(t)$ and $a(t)$ depend on $N(t)$ \eqref{eq:para-general}, we actually have derived a nonlinear equation for $N(t)$
\begin{equation}\label{fcl-N-g+}
    N(t)=\rho(t,V_F)\mathscr{N}(g_0+g_1N(t),a_0+a_1N(t)).
\end{equation}
Now we have derived the fast conductance limit model, summarized as 
\begin{equation}\label{eq:fcl-main}
    \begin{cases} \p_t\rho+g_{\text{in}}(t)\p_v\rho=0,\quad v\in(0,V_F),t>0.\\
        \rho(t,0)=\rho(t,V_F),\quad t>0.\\
     N(t)=\rho(t,V_F)\mathscr{N}(g_0+g_1N(t),a_0+a_1N(t)),\quad 0\leq N(t)<\infty,
    \end{cases}
\end{equation}where $g_{\myin}(t)=g_0+g_1N(t)$ as in \eqref{eq:para-general}.

 The fast conductance limit model \eqref{eq:fcl-main} is a nonlinear transport equation in $v$. A key feature is that $N(t)$ solves a nonlinear equation \eqref{fcl-N-g+}, which depends on the boundary value $\rho(t,V_F)$. When $0\leq \rho(t,V_F)<1/g_1$, \eqref{fcl-N-g+} has a unique solution. But if $\rho(t,V_F)\geq 1/g_1$, then there is no $N(t)\in[0,+\infty)$ satisfying \eqref{fcl-N-g+}, which we interpret as the blow-up of $N(t)$. Precisely we have the following lemma on the nonlinear equation \eqref{fcl-N-g+}.
\begin{lemma}\label{lemma:fcl-N}
Consider the following equation in $N^*$ with a parameter $\bar{\rho}\geq 0$,
\begin{equation}\label{eq-lm-N}
    N^*=\bar{\rho}\mathscr{N}(g_0+g_1N^*,a_0+a_1N^*),
\end{equation} where $\mathscr{N}$ is defined in \eqref{def-fcl-N}. 
When $\bar{\rho}\geq 1/{g_1}$, there is no solution $N^*$ for \eqref{eq-lm-N}. When $0\leq\bar{\rho}<{1}/{g_1}$, there is a unique solution $N^*$ for \eqref{eq-lm-N}. Moreover, denote the solution in the latter case as $N^*(\bar{\rho})$, and then we have
\begin{equation}
    \lim_{\bar{\rho}\rightarrow ({1}/{g_1})^{-}}N^*(\bar{\rho})=+\infty.
\end{equation}
\end{lemma}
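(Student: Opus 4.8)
The plan is to read \eqref{eq-lm-N} as a scalar fixed-point problem and exploit the monotonicity and the ``elasticity'' of its right-hand side. Set $\Psi(N):=\mathscr{N}(g_0+g_1N,\,a_0+a_1N)$ for $N\geq 0$; since $c=a_0+a_1N>0$, $\Psi$ is smooth, and \eqref{eq-lm-N} becomes $N^{*}=\bar\rho\,\Psi(N^{*})$. Three facts, all inherited from the bounds on $\mathscr{N}=V_FN$ already established, organize the argument: $\Psi(0)=\mathscr{N}(g_0,a_0)>0$ and the two-sided bound $g_0+g_1N=(g_0+g_1N)_+\leq \Psi(N)\leq g_0+g_1N+\sqrt{a_0+a_1N}$ from Lemma \ref{lemma-bound-N}. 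In particular $\Psi(N)>g_1N$ strictly. Non-existence for $\bar\rho\geq 1/g_1$ is then immediate: for $N>0$ one has $\bar\rho\Psi(N)>\bar\rho g_1N\geq N$, so no $N>0$ is a fixed point, while $N=0$ fails because $\bar\rho\Psi(0)>0$.

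For $0<\bar\rho<1/g_1$ any solution is positive (again $\bar\rho\Psi(0)>0$), so I would divide by $N$ and study $\psi(N):=\Psi(N)/N$ on $(0,\infty)$, reducing \eqref{eq-lm-N} to $\psi(N)=1/\bar\rho$. The bounds give the endpoint behaviour $\psi(N)\to+\infty$ as $N\to 0^{+}$ and $\psi(N)\to g_1$ as $N\to+\infty$. The crux is strict monotonicity, equivalently the elasticity inequality $N\Psi'(N)<\Psi(N)$. Using Lemma \ref{lemma:N-der}, write $\lambda=b/\sqrt c$ with $b=g_0+g_1N$, $c=a_0+a_1N$, so that $\Psi'(N)=g_1\Phi(\lambda)+a_1\phi(\lambda)/(2\sqrt c)$ and $\Psi(N)=b\Phi(\lambda)+\sqrt c\,\phi(\lambda)$, where $\phi,\Phi$ denote the standard normal density and distribution functions. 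Substituting $g_1N=b-g_0$ and $a_1N=c-a_0$, the computation collapses to
\[
\Psi(N)-N\Psi'(N)=g_0\,\Phi(\lambda)+\frac{c+a_0}{2\sqrt c}\,\phi(\lambda)>0,
\]
so $\psi$ is strictly decreasing. Hence $\psi$ is a continuous decreasing bijection of $(0,\infty)$ onto $(g_1,\infty)$; since $1/\bar\rho\in(g_1,\infty)$, there is exactly one root $N^{*}(\bar\rho)$, giving both existence and uniqueness (and $\bar\rho=0$ trivially yields $N^{*}=0$).

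For the final limit, as $\bar\rho\to(1/g_1)^{-}$ we have $1/\bar\rho\to g_1^{+}$. I would argue by contradiction: if $N^{*}(\bar\rho)$ stayed bounded along some sequence $\bar\rho_n\to (1/g_1)^{-}$, a convergent subsequence $N^{*}(\bar\rho_n)\to N_\infty<\infty$ would, by continuity of $\Psi$ and passage to the limit in $N^{*}=\bar\rho\,\Psi(N^{*})$, satisfy $N_\infty=(1/g_1)\Psi(N_\infty)>(1/g_1)\,g_1N_\infty=N_\infty$, a contradiction. Therefore $N^{*}(\bar\rho)\to+\infty$. Equivalently, monotonicity forces $\psi^{-1}(1/\bar\rho)\to\infty$ as $1/\bar\rho\downarrow g_1$.

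The only genuinely nontrivial step is the elasticity inequality $N\Psi'(N)<\Psi(N)$; everything else is soft analysis. I expect it to go through cleanly because the diffusive contribution $a_1\mathscr{N}_c$, weighted by $a_1N=c-a_0$, recombines with the $\sqrt c\,\phi(\lambda)$ term of $\Psi$ to leave a manifestly positive remainder, while the drift term only subtracts the $g_0\Phi(\lambda)$ mass created by $g_0>0$ — so strict positivity hinges precisely on $g_0>0$.
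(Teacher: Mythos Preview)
Your proof is correct. The non-existence part and the blow-up limit are handled essentially as in the paper (both exploit the lower bound $\mathscr{N}(b,c)\geq b$ from Lemma~\ref{lemma-bound-N}, which yields $N^{*}\geq g_0\bar\rho+g_1\bar\rho N^{*}$ and hence both non-existence for $g_1\bar\rho\geq1$ and the explicit lower bound $N^{*}\geq g_0\bar\rho/(1-g_1\bar\rho)\to\infty$). Where you genuinely diverge is in existence and uniqueness. The paper does not argue directly: it observes that setting $b^{*}=g_0+g_1N^{*}$, $c^{*}=a_0+a_1N^{*}$ converts \eqref{eq-lm-N} into the steady-state equations of the ODE system~\eqref{eq:system-1} with the rescaled parameters $g_1\mapsto g_1\bar\rho V_F$, $a_1\mapsto a_1\bar\rho V_F$, and then invokes Proposition~\ref{prop:ode-steady}. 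That proposition in turn rests on the technical Gaussian inequality of Lemma~\ref{lemma-ode-technical}. Your elasticity argument, culminating in the clean identity
\[
\Psi(N)-N\Psi'(N)=g_0\,\Phi(\lambda)+\frac{c+a_0}{2\sqrt{c}}\,\phi(\lambda)>0,
\]
is more elementary and entirely self-contained; it avoids the $A_1+A_2\leq1$ estimate altogether. The paper's route is economical in that it recycles earlier machinery, while yours shows that this particular lemma does not actually require that machinery. One small inaccuracy in your closing commentary: strict positivity does \emph{not} hinge on $g_0>0$, since the second term $\tfrac{c+a_0}{2\sqrt{c}}\phi(\lambda)$ is already strictly positive by $a_0>0$; this does not affect the proof itself.
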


In view of Lemma \ref{lemma:fcl-N} we shall add a ``physical'' restriction on $\rho(t,V_F)$
\begin{equation}\label{fcl-bound-vf}
    0\leq \rho(t,V_F)< \frac{1}{g_1},
\end{equation} which is crucial for the well-posedness of \eqref{eq:fcl-main}. By Lemma \ref{lemma:fcl-N}, under \eqref{fcl-bound-vf} we can uniquely solve $N(t)$ from \eqref{fcl-N-g+}, otherwise the firing rate blows up. Before we give the proof of Lemma \ref{lemma:fcl-N}, let us present the arrangements of this section.

	In Section \ref{sc:4.1deri}, with some essential assumptions on the solution, we give a rigorous derivation of the fast conductance limit model \eqref{eq:fcl-main}. Through semi-explicit formulas similar to Lemma \ref{lemma-explicit-solu}, we can see clearly the effect of $\veps$ as well as what happens to each Fourier mode when $\veps$ goes to zero. 
	
	Then in Section \ref{sc:4.2analysis} we give a sharp characterization of the long time behavior of the fast conductance limit model \eqref{eq:fcl-main}: depending on the $L^{\infty}$ norm of the initial value, either the firing rate $N(t)$ blows up in finite time or the solution global exists. In the latter case, the solution is periodic  in time.

	\begin{proof}[Proof of Lemma \ref{lemma:fcl-N}]
	The nonlinear function $\mathscr{N}(b,c)$ defined in \eqref{def-fcl-N} has been studied in the context of the ODE \eqref{eq:system-1} in Section \ref{sc:ode3.1}. Actually $\mathscr{N}(b,c)=V_FN(b,c)$, where $N(b,c)$ is defined in \eqref{def-N-bc}. Then the equation \eqref{eq-lm-N} becomes finding a $N^*$ such that
	\begin{equation}\label{eq-lm-N2}
	    N^*=\bar{\rho}V_FN(g_0+g_1N^*,a_0+a_1N^*)\geq 0.
	\end{equation}
	By Lemma \ref{lemma-bound-N}, $V_FN(b,c)\geq b$
	we deduce from \eqref{eq-lm-N2}
	\begin{equation}
	    N^*\geq g_0\bar{\rho}+g_1\bar{\rho} N^*.
	\end{equation} Therefore if $ g_1\bar{\rho} \geq 1$, there is no solution $N^*\geq 0$. While when $ g_1\bar{\rho}<1$,
	\begin{equation}
	    N^*\geq \frac{1}{1-g_1\bar{\rho} }g_0\bar{\rho},
	\end{equation} which goes to $+\infty$ as $\bar{\rho}$ goes to $({1}/{g_1})^{-}$. Indeed, this argument is in analogy to Corollary \ref{cor:ode-steady}.

	It remains to check the existence and uniqueness of $N^*$ when $g_1\bar{\rho} <1$. We shall again use the result from the ODE system \eqref{eq:system-1}. Let $b^*=g_0+g_1N^*,c^*=a_0+a_1N^*$ then \eqref{eq-lm-N2} is equivalent to find $(b^*,c^*)$ such that 
	\begin{equation}
	    \begin{cases}
	        g_0+(g_1\bar{\rho}V_F)N(b^*,c^*)-b^*=0,\\
	        a_0+(a_1\bar{\rho}V_F)N(b^*,c^*)-c^*=0,
	    \end{cases}
	\end{equation} which is equivalent to finding a steady state of the ODE system \eqref{eq:system-1}, with $g_1$ replaced by $g_1\bar{\rho}V_F$ and $a_1$ replaced by $a_1\bar{\rho}V_F$.  By Proposition \ref{prop:ode-steady}, we get the existence and uniqueness by checking  $(g_1\bar{\rho}V_F)/V_F=g_1\bar{\rho}<1$.
	\end{proof}
	
	If we define $\mathscr{N}(b,c):=b$ instead of \eqref{def-fcl-N}, i.e., extending the integral in $g$ from $\mathbb{R}^+$ to $\mathbb{R}$, then the equation \eqref{eq-lm-N} becomes
	\begin{equation*}
	    N^*=g_0\bar{\rho}+g_1\bar{\rho} N^*.
	\end{equation*} Clearly in this case, we need $0\leq \bar{\rho}<1/g_1$ to get a non-negative firing rate. Moreover, $N^*(\bar{\rho})=\frac{g_0\bar{\rho}}{1-g_1\bar{\rho}}$ goes to infinity as $\bar{\rho}$ goes to $({1}/{g_1})^{-}$. Lemma \ref{lemma:fcl-N} extends these facts to the nonlinear function \eqref{def-fcl-N}.  

	\subsection{Convergence to the fast conductance limit}\label{sc:4.1deri}

	In this section we derive the fast conductance limit model \eqref{eq:fcl-main} rigorously, under some essential assumptions on the solution.
	
	First let's derive the solution formulas for \eqref{eq:nonlinear-toy-veps}, like Lemma \ref{lemma-explicit-solu} for \eqref{eq:nonlinear-toy}. We still consider the Fourier expansion in $v$:
		\begin{equation}\label{eps-fourier-v}
\begin{aligned}
 \pe(t,v,g)&=\frac{1}{V_F}\sum_{k=-\infty}^{+\infty}\pe_k(t,g)e^{ikv\frac{2\pi}{V_F}},\\
\pe_k(t,g):&=\int_{0}^{V_F}\pe(t,v,g)e^{-ikv\frac{2\pi}{V_F}}dv,\quad k\in\mathbb{Z}.
\end{aligned}
	\end{equation}And similarly, plugging the expansion \eqref{eps-fourier-v} in \eqref{eq:nonlinear-toy-veps}, we get that each $\pe_k$ satisfies
	\begin{equation}\label{eps-eq:fourier-each-k}
\veps\p_t \pe_k+i\veps k\frac{2\pi}{V_F}g\pe_k=\p_g\bigl((-g_{\text{in}}^{\veps}(t)+g)p_k+a^{\veps}(t)\p_g\pe_k\bigr),\quad g\in\mathbb{R},\ t>0.
	\end{equation}
	
	By the following change of variable \begin{equation}\label{change-of-v}
	    \tau=t/\veps,\quad u=v/\veps,\quad U_F:=V_F/\veps,
	\end{equation}we reduce \eqref{eq:nonlinear-toy-veps} to the case $\veps=1$ \eqref{eq:nonlinear-toy}, in new time variable $\tau$ and voltage variable $u$. Then adapting Lemma \ref{lemma-explicit-solu}, we get solution formulas for \eqref{eq:nonlinear-toy-veps} in Lemma \ref{lemma-eps-explicit-solu} below.
	
	\begin{lemma}\label{lemma-eps-explicit-solu}
The solution $\pe_k(t,g)$ of the equation \eqref{eps-eq:fourier-each-k}, the $k$-th mode of the Fourier expansion in $v$ \eqref{eps-fourier-v} for \eqref{eq:nonlinear-toy-veps}, is given by
    	\begin{equation}\label{eps-explicit——k}
	\pe_k(t,g)=e^{i\veps k(\frac{2\pi}{V_F})g}(\pe_{t,k}\ast G^{\veps}_{t,k})(g).
	\end{equation} 
	Here $\pe_{t,k}$ is a shrinkage of $\pe_{0,k}$, which is the initial data for the $k$-th Fourier mode multiplied a shift in frequency:
	\begin{equation}\label{eps-shrinkage}
	\pe_{t,k}(y):=e^{t/\veps}\pe_{0,k}(e^{t/\veps}y),\quad \pe_{0,k}(g):=e^{-i\veps k\frac{2\pi}{V_F}g}\int_{0}^{V_F}p_{\text{init}}^{\veps}(v,g)e^{-ikv\frac{2\pi}{V_F}}dv.
	\end{equation} 
	And $G_{t,k}^{\veps}$ is a modified Gaussian with a phase factor and a decay factor, given by
	\begin{equation}\label{eps-eq:modified-Gaussian}
	G_{t,k}^{\veps}(z)=\frac{1}{\sqrt{2\pi C^{\veps}(t)}}\exp\left(-\frac{(z-B^{\veps}(t))^2}{2C^{\veps}(t)}\right)\exp\left(ik\veps\frac{2\pi}{V_F}\myBtwo^{\veps}(t,z)\right)\exp\left(- k^2\veps^2(\frac{2\pi}{V_F})^2D^{\veps}(t)\right).
	\end{equation}Here the mean $B^{\veps}(t)$ and the variance $C^{\veps}(t)$ are given by
	\begin{equation}\label{eps-eq:BC}
	\begin{aligned}
		B^{\veps}(t)=\int_{0}^{t/\veps}e^{-(t/\veps-\tilde{\tau})}g^{\veps}_{\text{in}}(\veps \tilde{\tau})d\tilde{\tau}=\int_{0}^{t}\frac{1}{\veps}e^{-(t-s)/\veps}g^{\veps}_{\text{in}}(s)ds.\\
	C^{\veps}(t)=2\int_{0}^{t/\veps}e^{-2(t/\veps-\tilde{\tau})}a^{\veps}(\veps \tilde{\tau})d\tilde{\tau}=\int_{0}^{t}\frac{2}{\veps}e^{-2(t-s)/\veps}a^{\veps}(s)ds.
	\end{aligned}
	\end{equation}
	Moreover $\myBtwo^{\veps}(t,z)$ and $D^{\veps}(t)$ are given by
	\begin{equation}\label{eps-def-B2}
	\myBtwo^{\veps}(t,z)=-(z-B^{\veps}(t))\frac{\int_{0}^{t}e^{(s-t)/\veps}a^{\veps}(s)ds}{\int_{0}^{t}e^{2(s-t)/\veps}a^{\veps}(s)ds}-\frac{1}{\veps}\int_{0}^{t}g_{\text{in}}^{\veps}(s)ds,
	\end{equation} and
	\begin{equation}\label{eps-decay-D(t)}
	D^{\veps}(t)=\frac{1}{\veps}\int_{0}^{t}a^{\veps}(s)ds-\frac{(\frac{1}{\veps}\int_{0}^{t}e^{(s-t)/\veps}a^{\veps}(s)ds)^2}{\frac{1}{\veps}\int_{0}^{t}e^{2(s-t)/\eps}a^{\veps}(s)ds}\geq 0.
	\end{equation}
\end{lemma}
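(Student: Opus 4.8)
The plan is to reduce \eqref{eq:nonlinear-toy-veps} to the unit-timescale model \eqref{eq:nonlinear-toy} already solved in Lemma \ref{lemma-explicit-solu}, via the rescaling \eqref{change-of-v}, and then translate the resulting formulas back to the original variables. Concretely, I would set $\tilde p(\tau,u,g):=\pe(\veps\tau,\veps u,g)$ with $\tau=t/\veps$ and $u=v/\veps$. Since $\p_t=\tfrac1\veps\p_\tau$ and $\p_v=\tfrac1\veps\p_u$ under this substitution, multiplying \eqref{eq:nonlinear-toy-veps} through by $\veps$ shows that $\tilde p$ solves exactly the $\veps=1$ equation \eqref{eq:nonlinear-toy} on the interval $u\in(0,U_F)$ with $U_F=V_F/\veps$, now with the time-dependent coefficients $\tilde g_{\text{in}}(\tau):=g_{\text{in}}^\veps(\veps\tau)$ and $\tilde a(\tau):=a^\veps(\veps\tau)$, and with the periodic boundary data \eqref{nonlinear-toy-bc} in $u$. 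Here I would invoke that Lemma \ref{lemma-explicit-solu} is valid for arbitrary time-dependent $g_{\text{in}},a$ (it is a semi-explicit representation, with $N$ entering through these coefficients), so that it applies verbatim to $\tilde p$ despite the self-consistent dependence of $\tilde g_{\text{in}},\tilde a$ on the firing rate.

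Next I would match Fourier modes. Because $\tfrac{2\pi}{U_F}=\veps\tfrac{2\pi}{V_F}$, the frequency $ku\tfrac{2\pi}{U_F}$ equals $kv\tfrac{2\pi}{V_F}$, so the $k$-th mode of $\tilde p$ in $u$ corresponds to the $k$-th mode of $\pe$ in $v$; changing the integration variable $v=\veps u$ in the defining integral gives $\tilde p_k(\tau,g)=\tfrac1\veps \pe_k(t,g)$, while the prefactor $\tfrac1{U_F}=\tfrac\veps{V_F}$ makes the two expansions consistent with \eqref{eps-fourier-v}. Applying Lemma \ref{lemma-explicit-solu} with $V_F$ replaced by $U_F$ and with coefficients $\tilde g_{\text{in}},\tilde a$ then yields a closed formula for $\tilde p_k$ as a shrunk initial datum convolved with a modified Gaussian whose ingredients $\tilde B,\tilde C,\tilde\myBtwo,\tilde D$ are exactly the quantities \eqref{eq:BC}, \eqref{def-B2}, \eqref{decay-D(t)} evaluated in the $\tau$-variable.

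The core of the argument is the bookkeeping that converts these $\tau$-variable objects into the claimed $t$-variable formulas \eqref{eps-eq:BC}, \eqref{eps-def-B2}, \eqref{eps-decay-D(t)}. For each integral I would substitute $s=\veps s'$: for instance $\tilde B(t/\veps)=\int_0^{t/\veps}e^{-(t/\veps-s)}\tilde g_{\text{in}}(s)\,ds=\int_0^t \tfrac1\veps e^{-(t-s')/\veps}g_{\text{in}}^\veps(s')\,ds'=B^\veps(t)$, and the analogous substitutions produce $C^\veps$, the ratio $\frac{\int_0^t e^{(s-t)/\veps}a^\veps(s)\,ds}{\int_0^t e^{2(s-t)/\veps}a^\veps(s)\,ds}$ appearing in $\myBtwo^\veps$, and $D^\veps$. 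The phase and decay factors carried by Lemma \ref{lemma-explicit-solu} scale through $\tfrac{2\pi}{U_F}=\veps\tfrac{2\pi}{V_F}$ and $(\tfrac{2\pi}{U_F})^2=\veps^2\pvf^2$, which is precisely how the factors $ik\veps\pvf$ and $-k^2\veps^2\pvf^2 D^\veps(t)$ arise in \eqref{eps-eq:modified-Gaussian}.

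Finally I would verify that the powers of $\veps$ cancel. The same substitution $v=\veps u$ in $\tilde p_{0,k}$ gives $\tilde p_{0,k}=\tfrac1\veps \pe_{0,k}$ with $\pe_{0,k}$ as in \eqref{eps-shrinkage}, hence $\tilde p_{\tau,k}=\tfrac1\veps \pe_{t,k}$; combined with $\pe_k=\veps\tilde p_k$ from the mode matching and the linearity of convolution, the factors $\veps$ and $\tfrac1\veps$ cancel and \eqref{eps-explicit——k} follows. I do not expect a genuine obstacle here, as every step is a change of variables; the one point demanding care is the simultaneous tracking of three independent $\tfrac1\veps$ contributions — from the domain rescaling $U_F=V_F/\veps$, from the time-integral substitution, and from the Fourier mode integral over $v$ — which must cancel cleanly. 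I would therefore organize the write-up so that these cancellations are displayed explicitly rather than left implicit, since a misplaced factor of $\veps$ is the only realistic way the bookkeeping could go wrong.
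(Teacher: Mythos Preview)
Your approach is correct and is exactly the one the paper takes: the paper states explicitly that the change of variables \eqref{change-of-v} reduces \eqref{eq:nonlinear-toy-veps} to the $\veps=1$ case and that Lemma~\ref{lemma-eps-explicit-solu} follows by ``adapting Lemma~\ref{lemma-explicit-solu}'', without spelling out the bookkeeping. Your tracking of the three $\veps$-factors (from $U_F=V_F/\veps$, from the mode integral $du=dv/\veps$, and from the time substitution $s=\veps s'$) and their eventual cancellation is precisely the content the paper leaves to the reader.
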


With Lemma \ref{lemma-eps-explicit-solu}, we can derive the fast conductance limit model under two assumptions. First we need an assumption on initial data as follows.
\begin{assumption}\label{as:init}
(i) For different $\veps>0$ the initial data $p^{\veps}_{\init}$ is the same, i.e., 
\begin{equation}
    p^{\veps}_{\init}(v,g)=p_{\init}(v,g),\quad v\in[0,V_F],g\in\mathbb{R},\quad\veps>0.
\end{equation}
(ii) 
Let \begin{equation}\label{def-init-k}
    p_{k,\init}(g):=\int_{0}^{V_F}p_{\init}(v,g)e^{-ikv\frac{2\pi}{V_F}}dv
\end{equation} be the $k$-th Fourier coefficient in $v$ of the initial data, then we assume
\begin{equation}
    \sum_{k=-\infty}^{+\infty}\|(1+|g|)p_{k,\init}\|_{L^1(\mathbb{R})}<\infty.
\end{equation}
\end{assumption}
Moreover, we impose an assumption on the solution, that is, we assume the firing rate $N^{\veps}{(t)}$ has a limit in $C[0,T]$ for some $T>0$, as stated in the following.
\begin{assumption}\label{as:N}
For some $T>0$, and $N(t)$ in $C[0,T]$, the firing rate $N^{\veps}(t)\geq0$ has the following limit
\begin{equation}
    N^{\veps}(t)\rightarrow N(t),\quad \text{in }C[0,T],
\end{equation} 
as $\veps\rightarrow0^+$.
\end{assumption}
Assumption \ref{as:N} implies that $N^{\veps}(t)$ is uniformly bounded and the limit $N(t)\geq 0$. And as a consequence, we have well-defined limits for $g_{\text{in}}^{\veps}(t)$ and $a^{\veps}(t)$ 
\begin{equation}\label{limit-gin-a}
    \begin{aligned}
    g_{\text{in}}^{\veps}(t)=g_0+g_1N^{\veps}(t)\rightarrow g_0+g_1N(t)=g_{\text{in}}(t),\\
    a^{\veps}(t)=a_0+a_1N^{\veps}(t)\rightarrow a_0+a_1N(t)=a(t),\\
    \end{aligned}
\end{equation} in $C[0,T]$ as $\veps$ goes to zero.

Assumption \ref{as:init}-(i) can be relaxed to convergence of $p_{\init}^{\veps}$ in the norm corresponding to Assumption \ref{as:init}-(ii). Here we assume the initial data  for different $\veps>0$ is the same for simplicity and clarity. Assumption \ref{as:init}-(ii) gives the control on the regularity in $v$ and the first moment in $g$. 

Assumption \ref{as:N} is more essential, since it is imposed on the solution rather than on initial data. Actually we shall show in Theorem \ref{thm:longtime-fcl} that for the limit model, the firing rate can blow up in finite time.
Therefore Assumption \ref{as:N} does not always hold, since the limit firing rate may not be well-defined.

Now we can state the rigorous result on the fast conductance limit.
\begin{proposition}\label{Prop:fcl-limit} With assumptions \ref{as:init} and \ref{as:N}, as $\veps$ goes to zero, the solution $p^{\veps}(t,v,g)$ of \eqref{eq:nonlinear-toy-veps} converges to a solution of the fast conductance limit model \eqref{eq:fcl-main} in the following sense. For $T>0$ in Assumption \ref{as:N} and every $0<T_0<T$, as $\veps\rightarrow0^+$,
\begin{equation}
    p^{\veps}(t,v,g)\rightarrow \rho(t,v)\frac{1}{\sqrt{2\pi a(t)}}\exp\left(-\frac{(g-g_{\myin}(t))^2}{2a(t)}\right),\quad \text{in  }L^{\infty}((T_0,T);L^1((0,V_F)\times\mathbb{R})).
\end{equation}
Here $\rho(t,v)$ is a solution of the fast conductance limit model \eqref{eq:fcl-main} and it satisfies the bound \eqref{fcl-bound-vf}. 
\end{proposition}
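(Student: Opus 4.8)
The plan is to pass to the limit mode by mode in the Fourier expansion \eqref{eps-fourier-v}, using the explicit representation of Lemma \ref{lemma-eps-explicit-solu}, and then to reassemble the series by a dominated-convergence argument whose summable majorant is furnished by Assumption \ref{as:init}. Write $\rho_{\init}(v):=\int_{\mathbb{R}}p_{\init}(v,g)\,dg$ for the initial $v$-marginal and $\hat{\rho}_{\init,k}:=\int_{\mathbb{R}}p_{k,\init}(g)\,dg=\int_0^{V_F}\rho_{\init}(v)e^{-ikv\frac{2\pi}{V_F}}\,dv$ for its Fourier coefficients. I expect the limiting $v$-profile to be the transport of $\rho_{\init}$ along the characteristics of \eqref{eq:fcl}, namely $\rho(t,v)=\rho_{\init}\!\big(v-\int_0^t g_{\myin}(s)\,ds\big)$, interpreted $V_F$-periodically.

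First I would identify the $\veps\to0^+$ limits of the macroscopic ingredients of $G^{\veps}_{t,k}$, uniformly on $[T_0,T]$ for each fixed $T_0>0$. The kernels $\tfrac1\veps e^{-(t-s)/\veps}$ and $\tfrac2\veps e^{-2(t-s)/\veps}$ in \eqref{eps-eq:BC} are approximate identities concentrating at $s=t$ (of mass $1-e^{-t/\veps}$), so by the uniform convergence $N^{\veps}\to N$ of Assumption \ref{as:N} and continuity of $g_{\myin},a$ one gets $B^{\veps}(t)\to g_{\myin}(t)$ and $C^{\veps}(t)\to a(t)$; the restriction $t\ge T_0>0$ is forced because these kernels lose mass as $t\to0^+$. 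A direct estimate of \eqref{eps-decay-D(t)} gives $D^{\veps}(t)=O(1/\veps)$, whence the decay factor $\exp(-k^2\veps^2(\tfrac{2\pi}{V_F})^2 D^{\veps}(t))\to1$ for each fixed $k$; crucially this factor provides no smoothing in the limit. In the phase \eqref{eps-def-B2}, the $z$-linear term is $O(1)$ in $z$ and disappears after multiplication by $\veps$, while the term $-\tfrac1\veps\int_0^t g_{\myin}^{\veps}$ survives, so $\veps\,\myBtwo^{\veps}(t,z)\to -\int_0^t g_{\myin}(s)\,ds$ uniformly on bounded $z$-sets and on $[T_0,T]$.

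Next, for fixed $k$ I would prove $\pe_k(t,\cdot)$ converges in $L^1(\mathbb{R})$, uniformly in $t\in[T_0,T]$. The shrinkage $\pe_{t,k}(y)=e^{t/\veps}\pe_{0,k}(e^{t/\veps}y)$ concentrates to the point mass $\hat{\rho}_{\init,k}\,\delta_0$, while $\bar G^{\veps}_{t,k}$ converges in $L^1$ to $\mathcal{G}(\cdot;g_{\myin}(t),a(t))\exp(-ik\tfrac{2\pi}{V_F}\int_0^t g_{\myin})$ by the macroscopic limits above; combined with $|e^{i\veps k(\frac{2\pi}{V_F})g}|=1$ this yields, by the standard approximation-to-the-identity estimate (bounding $\|\partial_g\bar G^{\veps}_{t,k}\|_{L^1}$ uniformly and using the exponentially small first moment $e^{-t/\veps}$ of the shrinkage),
\begin{equation*}
\pe_k(t,g)\longrightarrow \hat{\rho}_{\init,k}\,e^{-ik\frac{2\pi}{V_F}\int_0^t g_{\myin}(s)ds}\,\mathcal{G}(g;g_{\myin}(t),a(t))\quad\text{in }L^1(\mathbb{R}).
\end{equation*}
For the tail I would use $\|\pe_k(t,\cdot)\|_{L^1}\le\|\pe_{0,k}\|_{L^1}=\|p_{k,\init}\|_{L^1}$ (since $\|\bar G^{\veps}_{t,k}\|_{L^1}=1$ and the decay factor is $\le1$), so Assumption \ref{as:init}(ii) supplies a $\veps$- and $t$-independent summable majorant. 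Splitting the series into $|k|\le K$ and $|k|>K$, choosing $K$ to make the tail small uniformly and then sending $\veps\to0^+$ in the finite head, I obtain convergence of $\pe$ to $\tfrac1{V_F}\sum_k\hat{\rho}_{\init,k}e^{ik\frac{2\pi}{V_F}(v-\int_0^t g_{\myin})}\mathcal{G}(g;g_{\myin}(t),a(t))=\rho(t,v)\mathcal{G}(g;g_{\myin}(t),a(t))$ in $L^{\infty}((T_0,T);L^1((0,V_F)\times\mathbb{R}))$, by Fourier inversion for $\rho_{\init}$.

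It remains to identify $\rho$ as a solution of \eqref{eq:fcl-main}. That $\rho(t,v)=\rho_{\init}(v-\int_0^t g_{\myin})$ solves $\partial_t\rho+g_{\myin}\partial_v\rho=0$ with $\rho(t,0)=\rho(t,V_F)$ is immediate from the chain rule and $V_F$-periodicity of $\rho_{\init}$. The delicate point, and the step I expect to be the main obstacle, is the nonlinear relation \eqref{fcl-N-g+}, i.e.\ recovering $N(t)=\rho(t,V_F)\mathscr{N}(g_{\myin}(t),a(t))$. I would decompose $N^{\veps}(t)=\sum_k N^{\veps}_k(t)$ with $N^{\veps}_k(t)=\tfrac1{V_F}\int_0^\infty g\,\pe_k(t,g)\,dg$, exactly as in the proof of Proposition \ref{prop:eps-v}, and pass to the limit inside each $N^{\veps}_k$ and then inside the sum. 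The obstruction is that the positive-moment weight $g_+$ emphasizes large $g$, where neither the decay factor nor any Gaussian tightness helps uniformly; control there must come entirely from the first-moment bound $\sum_k\|(1+|g|)p_{k,\init}\|_{L^1}<\infty$ of Assumption \ref{as:init}(ii), which furnishes the summable majorant needed to interchange $\lim_{\veps\to0}$ with $\sum_k$. Passing to the limit gives $N^{\veps}_k(t)\to \hat{\rho}_{\init,k}e^{-ik\frac{2\pi}{V_F}\int_0^t g_{\myin}}\tfrac1{V_F}\mathscr{N}(g_{\myin}(t),a(t))$, and summing (using $e^{ik\frac{2\pi}{V_F}V_F}=1$) yields $N(t)=\rho(t,V_F)\mathscr{N}(g_{\myin}(t),a(t))$. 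Finally, since $N(t)<\infty$ by Assumption \ref{as:N}, Lemma \ref{lemma:fcl-N} forces $\rho(t,V_F)<1/g_1$, establishing \eqref{fcl-bound-vf} and completing the identification.
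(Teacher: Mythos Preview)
Your proposal is correct and follows essentially the same route as the paper: pass to the limit mode by mode in the explicit formulas of Lemma~\ref{lemma-eps-explicit-solu}, identify the limits of $B^{\veps},C^{\veps},\veps\,\myBtwo^{\veps},\veps^2D^{\veps}$ via the approximate-identity kernels and Assumption~\ref{as:N}, use the uniform bound $\|\pe_k(t,\cdot)\|_{L^1}\le\|p_{k,\init}\|_{L^1}$ together with Assumption~\ref{as:init}(ii) as a summable majorant to interchange limit and sum, and finally justify the firing-rate relation by the same mode-wise moment control as in the proof of Proposition~\ref{prop:eps-v} before invoking Lemma~\ref{lemma:fcl-N} for \eqref{fcl-bound-vf}. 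The only differences from the paper are cosmetic: you write $\rho$ directly as the transported initial marginal $\rho_{\init}(v-\int_0^t g_{\myin})$ and denote the Fourier coefficients $\hat\rho_{\init,k}$, whereas the paper defines $\rho$ through its Fourier series with coefficients $c_k$ (same quantity) and checks the equation afterward.
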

\begin{proof}[Proof of Proposition \ref{Prop:fcl-limit}]
The idea is to pass the limit for each term in Lemma \ref{lemma-eps-explicit-solu}.

First we consider the limit of $\pe_k(t,g)$. Recall \eqref{eps-shrinkage} $\pe_{t,k}(y):=e^{t/\veps}\pe_{0,k}(e^{t/\veps}y)$. Note that
\begin{equation*}
    \pe_{0,k}(g)=e^{-i\veps k\frac{2\pi}{V_F}g}\int_{0}^{V_F}p_{\text{init}}^{\veps}(v,g)e^{-ikv\frac{2\pi}{V_F}}dv\rightarrow \int_{0}^{V_F}p_{\text{init}}(v,g)e^{-ikv\frac{2\pi}{V_F}}dv, 
\end{equation*} in $L^1(\mathbb{R})$ as $\veps\rightarrow0^+$. Also note that for a function $h(x)$ in $L^{1}(\mathbb{R})$, its shrinkage $e^{t/\veps}h(x/\veps)$ goes to $\left(\int_{\mathbb{R}}h(y)dy\right)\delta(x)$, the Dirac measure at $x=0$ multiplied by the integral of $h$, as $\veps\rightarrow0^+$ (e.g. in the sense of distribution). Therefore for $\pe_{t,k}$, we have
\begin{equation}
    \pe_{t,k} \rightarrow c_k\delta(0),\quad t>0,
\end{equation} as $\veps$ goes to $0^+$. Here $c_k$ is defined as 
\begin{equation}\label{def-ck}
c_k:=  \int_{\mathbb{R}}\int_{0}^{V_F}p_{\text{init}}(v,g)e^{-ikv\frac{2\pi}{V_F}}dv,\quad k\in\mathbb{Z}.
\end{equation} Precisely, we shall use that for a $L^1$ function $f$, as $\veps$ goes to $0^+$
\begin{equation}\label{limit-2-pek}
    \pe_{t,k}\ast f \rightarrow c_kf,\quad \text{in }L^1(\mathbb{R}), 
\end{equation} and that this convergence is uniform for a family of $f$ with a uniform $W^{1,1}(\mathbb{R})$ bound. Recall \eqref{eps-shrinkage} $\pe_{t,k}(y):=e^{t/\veps}\pe_{0,k}(e^{t/\veps}y)$, the ``shrinkage factor'' $e^{t/\veps}$ is increasing w.r.t $t$, therefore this convergence is uniform in $(T_0,T)$ for $T_0>0$.

Now we examine the limit for $G_{t,k}^{\veps}$. First we note that as $\veps$ approaches zero, the integral of $\frac{1}{\veps}e^{-(t-s)/\veps}$ becomes more and more localized at $s=t$. Then using Assumption \ref{as:N} on the uniform convergence of $N^{\veps}$, we deduce 
\begin{equation}\label{limit-B}
    B^{\veps}(t)=\int_{0}^{t}\frac{1}{\veps}e^{-(t-s)/\veps}g^{\veps}_{\text{in}}(s)ds\rightarrow g_{\text{in}}(t)=g_0+g_1N(t),
\end{equation} as $\veps\rightarrow 0^+$.
Similarly
\begin{equation}\label{limit-C}
    	C^{\veps}(t)=\int_{0}^{t}\frac{2}{\veps}e^{-2(t-s)/\veps}a^{\veps}_{\text{in}}(s)ds\rightarrow a(t)=a_0+a_1N(t),
\end{equation}as $\veps\rightarrow 0^+$. Next we consider $\exp\left(i\veps k\frac{2\pi}{V_F}\myBtwo^{\veps}(t,z)\right)$. From \eqref{eps-def-B2} we get
\begin{align*}
    \veps \myBtwo^{\veps}(t,z)=-\veps(z-B^{\veps}(t))\frac{\int_{0}^{t}e^{(s-t)/\veps}a^{\veps}(s)ds}{\int_{0}^{t}e^{2(s-t)/\veps}a^{\veps}(s)ds}-\int_{0}^{t}g_{\text{in}}^{\veps}(s)ds.
\end{align*}With Assumption \ref{as:N}, the first term goes to zero, and the second term goes to $-\int_{0}^{t}g_{\text{in}}(s)ds$. Therefore
\begin{equation}\label{limit-B2}
    \lim_{\veps\rightarrow 0^+}\exp\left(i\veps k\frac{2\pi}{V_F}\myBtwo^{\veps}(t,z)\right)=\exp\left(-ik\frac{2\pi}{V_F}\int_{0}^{t}g_{\text{in}}(s)ds\right).
\end{equation} Finally we look at the decay factor $\exp\left(-k^2\veps^2(\frac{2\pi}{V_F})^2D^{\veps}(t)\right)$. By the expression \eqref{eps-decay-D(t)},
\begin{equation}\label{order-D-eps}
    \veps^2D^{\veps}(t)=\veps\int_{0}^{t}a^{\veps}(s)ds-\veps\frac{(\int_{0}^{t}e^{(s-t)/\veps}a^{\veps}(s)ds)^2}{\int_{0}^{t}e^{2(s-t)/\veps}a^{\veps}(s)ds},
\end{equation}which goes to zero as $\veps$ goes to $0^+$ thanks to Assumption \ref{as:N}. Therefore we have the following limit
\begin{equation}\label{limit-D}
    \lim_{\veps\rightarrow 0^+}\exp\left(-k^2\veps^2(\frac{2\pi}{V_F})^2D^{\veps}(t)\right)=1.
\end{equation} Combine \eqref{limit-B},\eqref{limit-C},\eqref{limit-B2} and \eqref{limit-D}, we get the limit of $G_{t,k}^{\veps}$
\begin{equation}\label{limit-Gt}
    G_{t,k}^{\veps}(g)\rightarrow \exp\left(-ik\frac{2\pi}{V_F}\int_{0}^{t}g_{\text{in}}(s)ds\right)\frac{1}{\sqrt{2\pi a(t)}}\exp\left(-\frac{(g-g_{\text{in}(t)})^2}{2a(t)}\right)=:G_{t,k}^0,\quad \text{as $\veps\rightarrow 0^+$}.
\end{equation} For a fixed $k$, thanks to the explicit expression, the convergence of $G_{t,k}^{\veps}$ is uniformly in $L^1(\mathbb{R})$ for $t$ in $(T_0,T)$. Moreover, by checking the formula for the derivatives, we can similarly deduce the convergence in $W^{1,1}(\mathbb{R})$.

Combining the $W^{1,1}$ convergence of $G_{t,k}^{\veps}$ \eqref{limit-Gt} with \eqref{limit-2-pek}, also noting that  $e^{i\veps k(\frac{2\pi}{V_F})g}\rightarrow1$ as $\veps\rightarrow0^+$, we deduce from the solution formula \eqref{eps-explicit——k}
\begin{align}\notag
    \pe_k(t,g)&=e^{i\veps k(\frac{2\pi}{V_F})g}(\pe_{t,k}\ast G^{\veps}_{t,k})\\\notag &=e^{i\veps k(\frac{2\pi}{V_F})g}(\pe_{t,k}\ast G^{0}_{t,k}+\pe_{t,k}\ast(G^{\veps}_{t,k}-G^{0}_{t,k}))\\&\rightarrow c_k\exp\left(-ik\frac{2\pi}{V_F}\int_{0}^{t}g_{\text{in}}(s)ds\right)\frac{1}{\sqrt{2\pi a(t)}}\exp\left(-\frac{(g-g_{\text{in}}(t))^2}{2a(t)}\right),\quad \text{as $\veps\rightarrow 0^+$},\label{limit-pk}
\end{align} in $L^1(\mathbb{R})$ and uniformly for $t$ in $(T_0,T)$. Apply the limit \eqref{limit-pk} to each Fourier mode, and exchange the sum and limit, which is ensured by Assumption \ref{as:init}-(ii) and $\|p_k^{\veps}(t,\cdot)\|_{L^{1}(\mathbb{R})}\leq\|p_{k,\init}(g)\|_{L^{1}(\mathbb{R})}$ from \eqref{eps-explicit——k}, and then we get
\begin{equation}
            \pe(t,v,g)\rightarrow p(t,v,g),\quad \text{as $\veps\rightarrow 0^+$},\quad \text{in  }L^{\infty}((T_0,T);L^1((0,V_F)\times\mathbb{R})),
\end{equation} where 
\begin{equation}\label{limit-p}
    \begin{aligned}
p(t,v,g):=\left(\frac{1}{V_F}\sum_{k=-\infty}^{+\infty}c_k\exp\left(ik\frac{2\pi}{V_F}(v-\int_0^tg_{\text{in}}(s)ds)\right)\right)\frac{1}{\sqrt{2\pi a(t)}}\exp\left(-\frac{(g-g_{\text{in}}(t))^2}{2a(t)}\right),
    \end{aligned}
\end{equation} where $c_k$ is defined in \eqref{def-ck} and $g_{\text{in}}(t),a(t)$ are still given by
\begin{equation*}
    g_{\text{in}}(t)=g_0+g_1N(t),\quad a(t)=a_0+a_1N(t),
\end{equation*} as in \eqref{limit-gin-a}. We denote the marginal density in $v$ direction $\rho(t,v)$, as in our previous derivation,
\begin{equation}\label{def-macro}
    \rho(t,v):=\frac{1}{V_F}\sum_{k=-\infty}^{+\infty}c_k\exp\left(ik\frac{2\pi}{V_F}(v-\int_0^tg_{\text{in}}(s)ds)\right),
\end{equation} then 
\begin{equation}
    p(t,v,g)=\rho(t,v)\frac{1}{\sqrt{2\pi a(t)}}\exp\left(-\frac{(g-g_{\text{in}}(t))^2}{2a(t)}\right).
\end{equation} 

Now we check that $\rho(t,v)$ indeed satisfies the fast conductance limit system \eqref{eq:fcl-main}. First we note that Assumption \ref{as:init}-(ii) ensure that
\begin{equation}
    \sum_{k=-\infty}^{+\infty}|c_k|\leq     \sum_{k=-\infty}^{+\infty}\|p_{k,\init}\|_{L^1(\mathbb{R})}<\infty.
\end{equation}Therefore $\rho(t,v)$ defined in \eqref{def-macro} is continuous, satisfies the boundary condition \eqref{eq:fcl-bc} and is a (weak) solution of the transport equation \eqref{eq:fcl}. We also note that $\rho(t,v)=\int_{\mathbb{R}}p(t,v,g)dg$ is a probability density on $(0,V_F)$ for any fixed $t$.

It remains to check the limit of the firing rate $N(t)$. Recall \eqref{eq:nonliner-toy-fire} for $\veps>0$ the firing rate is defined by
\begin{align}\label{fcl-deri-final}
    N^{\veps}(t)=\int_{0}^{+\infty}gp^{\veps}(t,V_F,g)dg.
\end{align} We need to take limit in \eqref{fcl-deri-final} and exchange the integral and limit in the right hand side. Thanks to Assumption \ref{as:init}-(ii), this can be justified by the control on firing rate similarly to \eqref{p5-tmp3-pf} in the proof of Proposition \ref{prop:eps-v}. Then we deduce
\begin{equation}
    N(t)=\rho(t,V_F)\int_0^{+\infty}g\frac{1}{\sqrt{2\pi a(t)}}\exp\left(-\frac{(g-g_{\text{in}}(t))^2}{2a(t)}\right)dg,
\end{equation} which is \eqref{fcl-N-g+}. Since we know by the Assumption \ref{as:N} that $N(t)\in C[0,T]$, by Lemma \ref{lemma:fcl-N} we deduce that $\rho$ satisfies the bound \eqref{fcl-bound-vf}, i.e., $0\leq \rho(t,V_F)< 1/g_1$.

\end{proof}

	\subsection{Periodic solutions versus blow up}\label{sc:4.2analysis}
	Now we analyze the fast conductance limit model \eqref{eq:fcl-main}. First we supply \eqref{eq:fcl-main} with an initial data
	\begin{equation}
	    \rho(0,v)=\rho_{\text{init}}(v),\quad v\in[0,V_F].
	\end{equation}The initial data $\rho_{\init}$ is also a probability density on $[0,V_F]$. We assume that the initial value $\rho_{\text{init}}$ is continuous and compatible with the boundary condition $\rho_{\text{init}}(0)=\rho_{\text{init}}(V_F)$. For the initial firing rate to be well-defined, in view of Lemma \ref{lemma:fcl-N} we need $\rho_{\init}(V_F)<1/g_1$.
	
	Depending on the $L^{\infty}$ norm of the initial value $\rho_{\init}(v)$, the solution of the fast conductance limit model \eqref{eq:fcl-main} either blows up in finite time or globally exists. In the latter case, the solution is periodic in time. These characterizations are given in the following theorem.
	\begin{theorem}\label{thm:longtime-fcl}
Suppose the
	initial data $\rho_{\init}$ is a continuous probability density on $[0,V_F]$. Moreover, it is compatible with the boundary condition: $\rho_{\init}(0)=\rho_{\init}(V_F)$ and satisfies $\rho_{\init}(V_F)<{1}/{g_1}$. Then we have the following results on the fast conductance limit system 
	\eqref{eq:fcl-main}.
	\begin{enumerate}
	    \item If $\max_{v\in[0,V_F]}\rho_{\init}(v)<{1}/{g_1}$, then the solution $\rho$ globally exists and is periodic in time.
	    \item Otherwise if $\max_{v\in[0,V_F]}\rho_{\init}(v)\geq {1}/{g_1}$. Then the firing rate $N(t)$ blows up in finite time. Precisely, there exists $T^*>0$ such that the solution exists on $(0,T^*)$ but
	    \begin{equation}
	        \lim_{t\rightarrow (T^*)^{-}}N(t)=+\infty.
	    \end{equation}
	    Moreover, we have the following upper bound for the blow-up time
	    \begin{equation}
	       0< T^*\leq V_F/g_0.
	    \end{equation}
	\end{enumerate}
	\end{theorem}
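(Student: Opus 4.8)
The plan is to integrate the transport equation along characteristics, reduce the coupled system to a single scalar relation for the accumulated drift $X(t) := \int_0^t g_{\text{in}}(s)\,ds$, and read off both the periodicity and the blow-up dichotomy from the geometry of that reduction. Extending $\rho_{\init}$ to a $V_F$-periodic function on $\mathbb{R}$ (legitimate since $\rho_{\init}(0)=\rho_{\init}(V_F)$), the method of characteristics for $\p_t\rho + g_{\text{in}}(t)\p_v\rho = 0$ gives $\rho(t,v) = \rho_{\init}(v - X(t))$, which automatically satisfies $\rho(t,0)=\rho(t,V_F)$; in particular the boundary value is $\bar\rho(t) := \rho(t,V_F) = \rho_{\init}(V_F - X(t))$. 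Feeding this into the closure $N(t) = \bar\rho(t)\,\mathscr N(g_0+g_1N,\,a_0+a_1N)$ and invoking Lemma \ref{lemma:fcl-N}, the firing rate is uniquely determined as $N(t) = N^*(\bar\rho(t))$ exactly while $\bar\rho(t) < 1/g_1$; since $g_{\text{in}}(t) = g_0 + g_1 N^*(\bar\rho(t))$, the definition of $X$ becomes the autonomous ODE
\begin{equation}
\frac{dX}{dt} = G(X) := g_0 + g_1 N^*\!\bigl(\rho_{\init}(V_F - X)\bigr), \qquad X(0)=0,
\end{equation}
with $G$ continuous, $V_F$-periodic in $X$, and bounded below by $g_0>0$ wherever $\rho_{\init}(V_F-X) < 1/g_1$. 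Because $G \ge g_0 > 0$ I would invert this relation once and for all: $t = \int_0^{X} dX'/G(X')$ defines a strictly increasing continuous bijection between $X$ and $t$, which settles existence and uniqueness of $X(t)$ (hence of $\rho$ and $N$) without needing $G$ to be Lipschitz.

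For part (1), when $\max_v\rho_{\init}(v) < 1/g_1$ the argument of $N^*$ stays strictly below $1/g_1$, so $G$ is globally defined and bounded, $g_0 \le G \le G_{\max} < \infty$; then $\int_0^X dX'/G(X') \ge X/G_{\max} \to \infty$, so $X(t)$ exists for all $t$ and $\rho$ globally exists. For periodicity I would set $T := \int_0^{V_F} dX/G(X) \in [V_F/G_{\max},\,V_F/g_0]$; by the $V_F$-periodicity of $G$ this is the time for $X$ to advance by $V_F$ from any starting value, so $X(t+T) = X(t)+V_F$ for all $t$, and the $V_F$-periodicity of $\rho_{\init}$ gives $\rho(t+T,v)=\rho(t,v)$ and $N(t+T)=N(t)$.

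For part (2), when $\max_v\rho_{\init}(v) \ge 1/g_1$ let $\eta^* := \max\{\eta\in[0,V_F] : \rho_{\init}(\eta)\ge 1/g_1\}$; since $\rho_{\init}(V_F)<1/g_1$ and $\rho_{\init}$ is continuous, $\eta^*<V_F$ and $\rho_{\init}(\eta^*)=1/g_1$, while $\rho_{\init}(\eta)<1/g_1$ for $\eta\in(\eta^*,V_F]$. Putting $X^* := V_F-\eta^* \in (0,V_F)$, one has $\bar\rho(t) = \rho_{\init}(V_F-X(t)) < 1/g_1$ for $X\in[0,X^*)$ but $\bar\rho\to(1/g_1)^-$ as $X\to X^*$, whence $N^*(\bar\rho)\to+\infty$ by the last assertion of Lemma \ref{lemma:fcl-N}. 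The singularity is reached at $T^* = \int_0^{X^*} dX/G(X)$, and since $G \ge g_0$ the integrand is $\le 1/g_0$ and integrable up to $X^*$, giving $0 < T^* \le X^*/g_0 \le V_F/g_0$ together with $\lim_{t\to(T^*)^-}N(t)=+\infty$.

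The step I expect to be most delicate is the rigorous treatment of the reduced dynamics near blow-up: the vector field $G$ is merely continuous and, worse, diverges as $X\uparrow X^*$, so I must argue that this produces a genuine finite-time singularity of $N(t)$ rather than $X$ stalling short of $X^*$. The inversion $t=\int_0^X dX'/G(X')$ is exactly what resolves this—the bound $G\ge g_0$ keeps $1/G$ bounded and integrable up to $X^*$, so $X$ attains $X^*$ at the finite time $T^*\le V_F/g_0$ while $\rho(t,\cdot)$ remains merely a bounded $V_F$-periodic translate of $\rho_{\init}$ throughout $(0,T^*)$; the blow-up is thus confined to the firing rate $N(t)$ and the drift $g_{\text{in}}(t)$. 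I would also record the compatibility check that $\bar\rho(0)=\rho_{\init}(V_F)<1/g_1$ makes $N(0)$ well defined, so that the construction starts legitimately in both cases.
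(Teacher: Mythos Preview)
Your proof is correct and is essentially the same as the paper's. What you call the accumulated drift $X(t)=\int_0^t g_{\text{in}}(s)\,ds$ is precisely the new time variable $\tau$ the paper introduces; your autonomous ODE $dX/dt=G(X)$ and its inversion $t=\int_0^X dX'/G(X')$ are the paper's change of variable $d\tau=g_{\text{in}}(t)\,dt$ and its inverse $t=\int_0^\tau d\tilde\tau/g_{\text{in}}(\tilde\tau)$, and your $\eta^*$, $X^*$ are the paper's $v^*$, $\tau^*=V_F-v^*$.
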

	\begin{proof}
	Suppose the solution exists, by a change of variable in time $\tau=\int_0^tg_{\text{in}}(s)ds$, i.e., $d\tau=g_{\text{in}}(t)dt$, we reduce \eqref{eq:fcl-main} to a simple linear equation for $n(\tau,v):=\rho(t,v)$
	\begin{equation}\label{eq:linear-transport}
	\begin{aligned}
		    \p_{\tau}n+\p_vn&=0,\quad \tau>0,v\in(0,V_F),\\
		    n(\tau,0)&=n(\tau,V_F),\quad \tau>0,
	\end{aligned}
	\end{equation} whose solution is
	\begin{equation}\label{linear-transport}
	    n(\tau,v)=\rho_{\text{init}}(v-\tau),
	\end{equation} where $v-\tau$ should be understood in mod $V_F$ sense (or considering the periodic extension of $\rho_{\init}$).
	
	In view of Lemma \ref{lemma:fcl-N}, as long as $n(\tau,V_F)<1/g_1$, we can construct the firing rate $N(\tau)$ from the solution \eqref{linear-transport} of the linear transport equation \eqref{eq:linear-transport}. Then we can construct the solution $\rho$, by changing back from time variable $\tau$ to $t$. 

	In Case 1, we always have $n(\tau,V_F)\leq \max_{v\in[0,V_F]}\rho_{\init}(v)<1/g_1$. Therefore we can construct the firing rate $N(\tau)$ for all $\tau>0$. Moreover, in time variable $\tau$ the solution $n(\tau,v)$ is periodic with the period $V_F$, which implies that $t=\int_0^{\tau}\frac{1}{g_{\myin}(\tilde{\tau})}d\tilde{\tau}$ goes to infinity as $\tau$ goes to infinity. Therefore changing back to the time variable $t$, we get a global solution $p(t,v)$ with the period $T:=\int_0^{V_F}\frac{1}{g_{\myin}(\tau)}d\tau>0$. Here we use $N(\tau),g_{\myin}(\tau)$ for $N$ and $g_{\myin}$ in the time variable $\tau$.
	
	In Case 2, since $\rho_{\init}(V_F)<1/g_1$, we can find a unique $v^*\in(0,V_F)$ such that
\begin{equation*}
    \rho_{\init}(v^*)=1/g_1,\quad \rho_{\init}(v)<1/g_1,\,  \forall v\in(v^*,V_F].
\end{equation*}	
 Then we can solve \eqref{eq:linear-transport} for $\tau<\tau^*:=V_F-v^*$ but at time $\tau^*$ the firing rate can not be defined. Transforming back to time $t$, we get a solution of \eqref{eq:fcl} on the time interval $(0,T^*)$, where
 \begin{equation}
      T^*:=\int_0^{V_F-v^*}\frac{1}{g_{\myin}(\tau)}d\tau\leq \frac{V_F-v^*}{g_0}<\frac{V_F}{g_0}<\infty.
 \end{equation}
 Since by continuity, $\lim_{t\rightarrow (T^*)^{-}}\rho(t,V_F)=1/g_1$, from Lemma \ref{lemma:fcl-N} we deduce that $\lim_{t\rightarrow (T^*)^{-}}N(t)=+\infty$.
	
	\end{proof}
	
	Theorem \ref{thm:longtime-fcl} gives the long time behavior for different initial data under a fixed parameter $g_1$. We can also reformulate it by fixing the initial data and let $g_1$, which reflects the strength of the nonlinearity, vary. 
	\begin{corollary}\label{cor-fcl-fixinit}
	For a fixed initial data $\rho_{\init}(v)$ which is a continuous probability density on $[0,V_F]$ and is compatible with the boundary condition \eqref{eq:fcl-bc} $\rho_{\init}(0)=\rho_{\init}(V_F)$. Then there exists a  threshold 
	$$g_1^*:=1/\left(\max_{v\in[0,V_F]}\rho_{\init}(v)\right).$$
	For $g_1\geq g_1^*$, the solution of \eqref{eq:fcl-main} blows up in finite time. While for $0<g_1<g_1^*$, the solution globally exists and is periodic.
	\end{corollary}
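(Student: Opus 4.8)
The plan is to obtain Corollary \ref{cor-fcl-fixinit} directly from Theorem \ref{thm:longtime-fcl} by re-expressing its dichotomy in terms of the parameter $g_1$ rather than the initial data. Write $M := \max_{v\in[0,V_F]}\rho_{\init}(v)$, so that $g_1^* = 1/M$ by definition; since $\rho_{\init}$ is a probability density on $[0,V_F]$ we have $M>0$, and $g_1^*$ is a well-defined positive number. Multiplying by the positive quantities $g_1$ and $M$, the two conditions appearing in Theorem \ref{thm:longtime-fcl}, namely $M<1/g_1$ and $M\geq 1/g_1$, are equivalent to $g_1<1/M=g_1^*$ and $g_1\geq g_1^*$, respectively.

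First I would treat the subcritical regime $0<g_1<g_1^*$. Here $\rho_{\init}(V_F)\leq M<1/g_1$, so the compatibility hypothesis $\rho_{\init}(V_F)<1/g_1$ required by Theorem \ref{thm:longtime-fcl} for the initial firing rate to be well-defined holds automatically; moreover $\max_v\rho_{\init}=M<1/g_1$ places us precisely in Case 1 of that theorem. Hence the solution exists globally and is periodic in time, which is the claimed behavior for $0<g_1<g_1^*$.

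Next I would handle the supercritical regime $g_1\geq g_1^*$, where $M\geq 1/g_1$. If in addition $\rho_{\init}(V_F)<1/g_1$, then the hypotheses of Theorem \ref{thm:longtime-fcl} are satisfied and its Case 2 yields finite-time blow-up of the firing rate, with the bound $0<T^*\leq V_F/g_0$. The only remaining possibility is $\rho_{\init}(V_F)\geq 1/g_1$; in this case Lemma \ref{lemma:fcl-N} shows that the nonlinear relation \eqref{fcl-N-g+} admits no finite firing rate already at $t=0$, so $N$ cannot be constructed and the solution fails to exist, which we interpret as instantaneous blow-up. In either subcase the solution does not exist globally, establishing the claim for $g_1\geq g_1^*$.

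Since the argument is a reparametrization of Theorem \ref{thm:longtime-fcl}, no new estimate is needed. I expect the only delicate point to be the boundary behavior at $v=V_F$: the well-posedness hypothesis $\rho_{\init}(V_F)<1/g_1$ of Theorem \ref{thm:longtime-fcl} can fail once $g_1$ exceeds $1/\rho_{\init}(V_F)$, so the supercritical regime must be split into the generic interior blow-up and this boundary-driven instantaneous blow-up. Making this split explicit is what confirms that the threshold $g_1^*$ is exactly the stated $1/\max_v\rho_{\init}$.
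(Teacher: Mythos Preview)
Your proof is correct and follows the same approach as the paper, which simply states that the corollary is a direct consequence of Theorem \ref{thm:longtime-fcl} together with the convention that if $\rho_{\init}(V_F)\geq 1/g_1$ then the solution blows up at time $t=0$. You have merely spelled out explicitly the case split (interior blow-up versus instantaneous blow-up at the boundary) that the paper compresses into that one-line convention.
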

	Here if $\rho_{\init}(V_F)\geq 1/g_1$, we say that the solution blows up at time $t=0$. Corollary \ref{cor-fcl-fixinit} is a direct consequence of Theorem \ref{thm:longtime-fcl}.
	
	We can also show that if $g_1$, the strength of the nonlinearity, is too large, then every solution, with a probability density initial data, blows up in finite time.
	\begin{corollary}\label{cor-fcl-vf-g1}
	If $g_1\geq V_F$, then any solution of \eqref{eq:fcl-main}, with an initial data satisfying conditions in Corollary \ref{cor-fcl-fixinit}, blows up in finite time.
	\end{corollary}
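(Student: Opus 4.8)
The plan is to reduce everything to Corollary \ref{cor-fcl-fixinit} via a single elementary observation about probability densities, so that no new dynamical analysis is needed. Recall that Corollary \ref{cor-fcl-fixinit} already establishes the sharp dichotomy: for a fixed admissible initial datum, the solution of \eqref{eq:fcl-main} blows up in finite time exactly when $g_1 \geq g_1^* := 1/\bigl(\max_{v\in[0,V_F]}\rho_{\init}(v)\bigr)$, which is equivalent to $\max_{v\in[0,V_F]}\rho_{\init}(v) \geq 1/g_1$. Hence it suffices to show that the hypothesis $g_1 \geq V_F$ forces this threshold condition to hold for \emph{every} admissible $\rho_{\init}$.

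First I would use that $\rho_{\init}$ is continuous on the compact interval $[0,V_F]$, so that its maximum is attained, and that it is a probability density, so $\int_0^{V_F}\rho_{\init}(v)\,dv = 1$. Since the maximum of a function never lies below its average, this yields
\begin{equation*}
\max_{v\in[0,V_F]}\rho_{\init}(v) \geq \frac{1}{V_F}\int_0^{V_F}\rho_{\init}(v)\,dv = \frac{1}{V_F},
\end{equation*}
or equivalently $g_1^* = 1/\bigl(\max_{v}\rho_{\init}(v)\bigr) \leq V_F$. This averaging bound is the only nontrivial ingredient, and it is entirely elementary.

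Combining these facts, the assumption $g_1 \geq V_F \geq g_1^*$ gives $g_1 \geq g_1^*$, and Corollary \ref{cor-fcl-fixinit} immediately delivers finite-time blow-up, with the convention that if already $\rho_{\init}(V_F) \geq 1/g_1$ then the blow-up occurs at $t=0$. The boundary case $g_1 = V_F$ with $\rho_{\init}\equiv 1/V_F$ is consistent with this, as there $\max_v \rho_{\init} = 1/g_1$ and $\rho_{\init}(V_F) = 1/g_1$. I do not expect any genuine obstacle: the content is precisely the pigeonhole-type estimate $\max \geq \text{mean}$ for densities on an interval of length $V_F$, after which the conclusion is a direct invocation of the preceding corollary.
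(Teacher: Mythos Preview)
Your proof is correct and matches the paper's approach essentially verbatim: the paper also uses the averaging inequality $\max_{v\in[0,V_F]}\rho_{\init}(v)\geq \frac{1}{V_F}\int_0^{V_F}\rho_{\init}(v)\,dv = \frac{1}{V_F}\geq \frac{1}{g_1}$ and then invokes the preceding blow-up criterion. The only cosmetic difference is that the paper cites Theorem~\ref{thm:longtime-fcl} directly rather than the reformulation in Corollary~\ref{cor-fcl-fixinit}.
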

	\begin{proof}[Proof of Corollary \ref{cor-fcl-vf-g1}]
	Since $\rho_{\text{init}}$ is a probability density and continuous on $[0,V_F]$,
	\begin{equation}
	    \max_{v\in[0,V_F]}\rho_{\text{init}}(v)\geq \frac{\int_0^{V_F}\rho_{\text{init}}(v)dv}{V_F}=\frac{1}{V_F}\geq \frac{1}{g_1}.
	\end{equation} Then the result follows from Theorem \ref{thm:longtime-fcl}.
	\end{proof}

	Corollary \ref{cor-fcl-vf-g1} is sharp in the following sense: if $g_1<V_F$, then $\rho_{\infty}(v)\equiv \frac{1}{V_F}$, the density of uniform distribution on $[0,V_F]$, is a steady state with a finite firing rate, therefore a global solution. Moreover, this threshold is consistent with Proposition \ref{prop:g1>=1}, which can be extended to $\veps>0$ similarly using the change of variable \eqref{change-of-v}. Indeed, one can show for the voltage-conductance simplified model \eqref{eq:nonlinear-toy-veps} that the firing rate $N(t)$ goes to infinity as $t$ goes to infinity for any solution when $g_1\geq V_F$.
	
	In view of the limit process $\veps\rightarrow0^+$, we speculate that the finite time blow up of the limit model corresponds to that $N^{\veps}(t)$ goes to infinity as $\veps$ goes to $0^+$. A numerical simulation by adapting the scheme in \cite{caceres2011numerical} is given in Figure \ref{fig:my_label}. We plot the firing rate $N^{\veps}$ for various $\veps$. As $\veps$ goes to zero, the firing rate of the voltage-conductance simplified model \eqref{eq:nonlinear-toy-veps} becomes larger and larger. And for the limit model \eqref{eq:fcl-main}, formally $\veps=0$, the firing rate blows up in finite time. This may reflect an intuition given in \cite{perthame2013voltage}, which says that blow-up happens at a longer timescale than the timescale of the kinetic model. 
	\begin{figure}[H]
	    \centering
	    \includegraphics[width=0.7\linewidth]{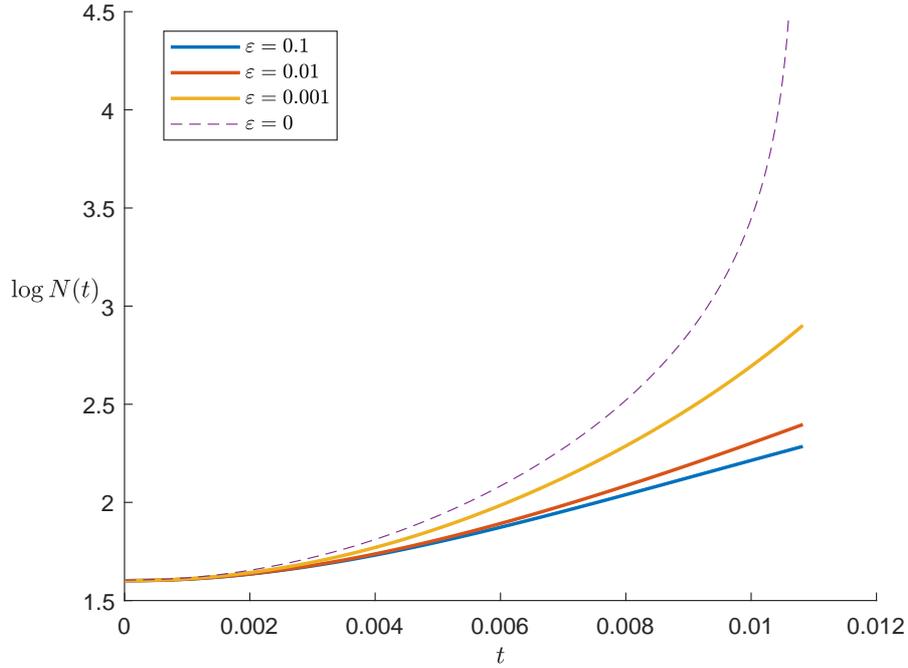}
	    \caption{Firing rate $N(t)$ in a log scale with respect to time for  the voltage-conductance simplified model \eqref{eq:nonlinear-toy-veps} with different $\veps$ versus the fast conductance limit model \eqref{eq:fcl-main} for $\veps=0$. Parameters: $g_0=10,
		g_1=1,
		,a_0=2, a_1=0.1$. In this case, the limit solution blows up.}
	    \label{fig:my_label}
	\end{figure}
On the contrary, if we assume $N^{\veps}(t)$ uniformly converges to some $N(t)$ on $[0,T]$, then the limit solution exists on $[0,T]$ as shown in Proposition \ref{Prop:fcl-limit}. From Theorem \ref{thm:decay-v} we know that when $\veps>0$ the solution of the kinetic voltage-conductance simplified model \eqref{eq:nonlinear-toy} converges to the homogeneous $v$ problem \eqref{eq:reduce-p}. However, Theorem \ref{thm:longtime-fcl} implies that this convergence does not hold in the fast conductance limit model \eqref{eq:fcl-main} since all solutions are periodic. This apparent contradiction can be reconciled by \eqref{eps-decay-D(t)} and \eqref{order-D-eps} in Section \ref{sc:4.1deri}, from which we can observe that the ``decay factor'' for non-constant modes is like $\exp(-\veps k^2 t)$, which vanishes as $\veps$ goes to zero. To illustrate this, we perform numerical simulations by adapting schemes developed in \cite{caceres2011numerical}. In Figure \ref{fig:finiteeps}, we plot the firing rate $N(t)$ for different $\veps$, including the limit case $\veps=0$. For the fast conductance limit model the firing rate is periodic, which is consistent with Theorem \ref{thm:longtime-fcl}. While when $\veps>0$, the solution shows damped oscillations, which last longer and longer as $\veps$ goes to zero.

We remark that results in this section can be directly extended to more general velocity fields $gf(v)$, using the change of variable in Section \ref{sc:gen-vel}.

\begin{figure}[H]
	\centering
	\includegraphics[width=1.0\linewidth]{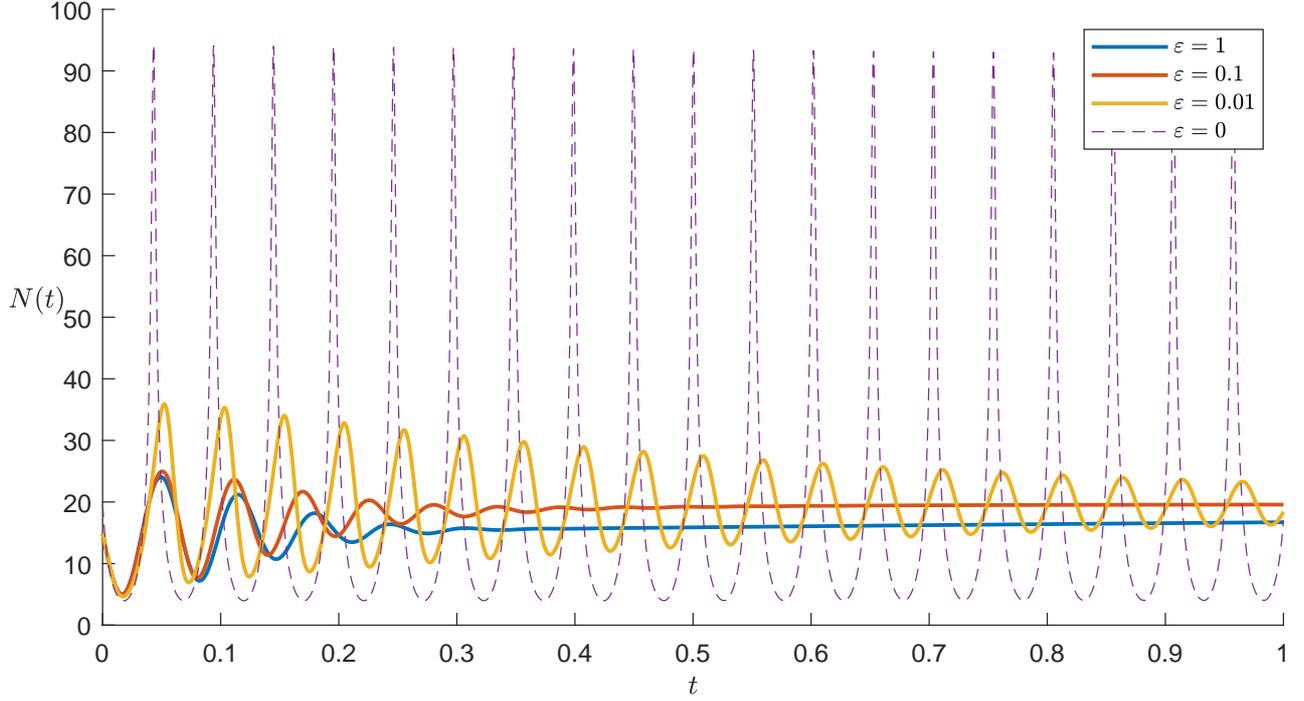}
	\caption{Firing rate $N(t)$ with respect to time for different $\veps>0$ of the voltage-conductance simplified model \eqref{eq:nonlinear-toy-veps} versus the fast conductance limit model \eqref{eq:fcl-main} for $\veps=0$. Parameters: $g_0=10,
		g_1=0.5,
		,a_0=2, a_1=0.1$. In this case, the limit solution is periodic.}
	\label{fig:finiteeps}
\end{figure}

\section*{Acknowledgements}
JAC was supported by the Advanced Grant Nonlocal-CPD (Nonlocal PDEs for Complex Particle Dynamics: Phase Transitions, Patterns and Synchronization) of the European Research Council Executive Agency (ERC) under the European Union's Horizon 2020 research and innovation programme (grant agreement No. 883363). ZZ is supported by the National Key R\&D Program of China, Project Number 2021YFA1001200, and the NSFC, grant Number 12171013. XD is partially supported by The Elite Program of
Computational and Applied Mathematics for PhD Candidates in Peking University.

\begin{appendices}
	\section{Derivation of the solution formula in Lemma \ref{lemma-explicit-solu}}\label{app:deri}
	In this appendix we give detailed calculations for Lemma \ref{lemma-explicit-solu}. Lemma \ref{lemma-explicit-solu} can be seen as an extension of the solution formula for the Fokker Planck equation associated with the OU process \cite{risken1996fokker}.
	
	Recall \eqref{eq:fourier-each-k},
		\begin{equation*}
	\p_t p_k+ik\frac{2\pi}{V_F}gp_k=\p_g\bigl((-g_{\text{in}}(t)+g)p_k+a(t)\p_gp_k\bigr),\quad g\in\mathbb{R},\ t>0.
	\end{equation*}
	
	To be concise, in the following we drop the subscript $k$ and introduce $\mu:=k\frac{2\pi}{V_F}$. Then \eqref{eq:fourier-each-k} becomes
	\begin{equation}\label{eq:mode-k-lambda}
			\p_t p+i\mu gp=\p_g\bigl((-g_{\myin}(t)+g)p+a(t)\p_gp\bigr),\quad g\in\mathbb{R},\ t>0,
		\end{equation}
		We consider the Fourier transform in $g$
		\begin{equation*}
		    \hat{p}(t,\xi):=\mathcal{F}_{g}(p(t,g)):=\frac{1}{\sqrt{2\pi}}\int_{-\infty}^{+\infty}e^{-ig\xi}p(t,g)dg.
		\end{equation*}
		Then we have
		\begin{equation*}
		\widehat{gp}=i\p_{\xi}\hat{p},\quad \widehat{\p_gp}=i\xi \hat{p},\quad \frac{1}{\sqrt{2\pi}}\widehat{f_1\ast f_2}=\hat{f_1}\hat{f_2}.
		\end{equation*}
		In terms of the Fourier transform $\hat{p}$, \eqref{eq:mode-k-lambda} becomes,
		\begin{align*}
\p_t\hat{p}-\mu \p_{\xi}\hat{p}=-ig_{\myin}(t){\xi}\hat{p}-\xi\p_{\xi}\hat{p}-a(t)\xi^2\hat{p},
		\end{align*} which simplifies to 
		\begin{equation}\label{eq:mode-k-lambda-four-app}
\p_t\hat{p}+(\xi-\mu) \p_{\xi}\hat{p}=-(a(t)\xi^2+ig_{\myin}(t)\xi)\hat{p},
		\end{equation}
		This is a first order equation in $\xi$ whose characteristic is given by
		\begin{equation}\label{eq:char-app}
\frac{d\xi(t)}{dt}=\xi(t)-\mu,
		\end{equation}whose solution is
		\begin{equation}\label{eq:char-solu-app}
		    \xi(s)-\mu=e^{s-t}(\xi(t)-\mu).
		\end{equation}
		
		Solving the equation \eqref{eq:mode-k-lambda-four-app} along characteristic \eqref{eq:char-app}, we get
		\begin{equation}\label{app-1-tmp1}
		    \hat{p}(t,\xi(t))=\hat{p}(0,\xi(0))\exp\left(-\int_0^t[a(s)\xi(s)^2+ig_{\myin}(s)\xi(s)]ds\right).
		\end{equation}
		In view of \eqref{eq:char-solu-app}, by identities $x=x-\mu+\mu,x^2=(x-\mu)^2+2\mu(x-\mu)+\mu^2$, where $x=\xi(s)$ in the following calculation, we get
		\begin{equation*}
		\begin{aligned}\int_0^t[a(s)\xi(s)^2+ig_{\myin}(s)\xi(s)]ds&=(\xi(t)-\mu)^2[\int_0^te^{2(s-t)}a(s)ds]\\&+(\xi(t)-\mu)[2\mu\int_0^te^{(s-t)}a(s)ds+i\int_0^te^{s-t}g_{\myin}(s)ds]\\&+\mu^2\int_0^ta(s)ds+i\mu\int_0^tg_{\myin}(s)ds.
		\end{aligned}
		\end{equation*}
		Therefore \eqref{app-1-tmp1} becomes
		\begin{equation}
		    \hat{p}(t,\xi)=\hat{p}(0,e^{-t}(\xi-\mu)+\mu)e^{-H_t(\xi-\mu)},
		\end{equation}
		where $H_t(x)$ is the following quadratic function
		\begin{equation}\label{def-Ht}
		\begin{aligned}
	 H_t(x):=x^2[\int_0^te^{2(s-t)}&a(s)ds]+x[2\mu\int_0^te^{(s-t)}a(s)ds+i\int_0^te^{s-t}g_{\myin}(s)ds]\\&+\mu^2\int_0^ta(s)ds+i\mu\int_0^tg_{\myin}(s)ds.
		\end{aligned}
		\end{equation}
		By the inverse Fourier transform $\mathcal{F}^{-1}$, we have
		\begin{align}\notag
		    p(t,g)=&\mathcal{F}^{-1}_{\xi}[{\hat{p}(0,e^{-t}(\xi-\mu)+\mu)e^{-H_t(\xi-\mu)}}]\\\notag
		    &=e^{i\mu g}\mathcal{F}^{-1}_{\xi}[{\hat{p}(0,e^{-t}\xi+\mu)e^{-H_t(\xi)}}]\\
		    &=e^{i\mu g}\mathcal{F}^{-1}_{\xi}[\hat{p}(0,e^{-t}\xi+\mu)]*(\frac{1}{\sqrt{2\pi}}\mathcal{F}^{-1}_{\xi}[e^{-H_t(\xi)}]).\label{tmp:deri-formula-vg}
		    \end{align}
		    For the first inverse transform, we calculate
		 \begin{align}\notag
		   \mathcal{F}^{-1}_{\xi}[\hat{p}(0,e^{-t}\xi+\mu)]&= \mathcal{F}^{-1}_{\xi}[\hat{p}(0,e^{-t}\xi)]e^{-i\mu e^t g}\\&=e^tp(0,e^tg)e^{-i\mu e^t g}.\label{fourier-init}
		 \end{align}
		 For the second, we rewrite $e^{-H_t(\xi)}$ in \eqref{def-Ht} as
		 \begin{equation}
		     e^{-H_t(\xi)}=A(t)e^{-\frac{1}{2}C(t)\xi^2-\mu(2C_0(t)+iB(t))\xi},
		 \end{equation}
		 where
		 \begin{equation}
		 \begin{aligned}
		 A(t)&:=e^{-\int_0^ta(s)ds-i\mu\int_0^tg_{\myin}(s)ds},\quad C(t):=2\int_0^te^{2(s-t)}a(s)ds, \\C_0(t)&:=\mu\int_0^te^{(s-t)}a(s)ds,\quad B(t):=\int_0^te^{s-t}g_{\myin}(s)ds. 
		 \end{aligned}
		 \end{equation}
		 Then the inverse Fourier transform of $e^{-H_t(\xi)}$ reads
		 \begin{align*}
		 	     \mathcal{F}^{-1}_{\xi}[e^{-H_t(\xi)}]&=\frac{1}{\sqrt{2\pi}}\int_{-\infty}^{+\infty}e^{ig\xi}e^{-H_t(\xi)}d\xi\\
		 	     &=A(t)\left[\frac{1}{\sqrt{2\pi}}\int_{-\infty}^{+\infty}\exp\left(-\frac{1}{2}C\xi^2-(2C_0+iB-ig)\xi\right)d\xi\right]\\
		 	     &=A(t)\left[\frac{1}{\sqrt{2\pi}}\int_{-\infty}^{+\infty}\exp\left(-\frac{1}{2}C(\xi+\frac{2C_0+iB-ig}{C})^2+\frac{(2C_0+iB-ig)^2}{2C}\right)d\xi\right]\\
		 	     &=A(t)\frac{1}{\sqrt{C}}\exp\left(\frac{2C_0^2}{C}+i\frac{2C_0(B-g)}{C}-\frac{(B-g)^2}{2C}\right).
		 \end{align*}
		 Together with \eqref{tmp:deri-formula-vg} and \eqref{fourier-init}, we get
		 \begin{align}\notag
		     p(t,g)&=e^{i\mu g}\mathcal{F}^{-1}_{\xi}[\hat{p}(0,e^{-t}\xi+\mu)]*(\frac{1}{\sqrt{2\pi}}\mathcal{F}^{-1}_{\xi}[e^{-H_t(\xi)}])\\&=e^{i\mu g}[e^tp(0,e^t\cdot)e^{-i\mu e^t \cdot}]\ast(\frac{1}{\sqrt{2\pi}}\mathcal{F}^{-1}_{\xi}[e^{-H_t(\xi)}]).
		 \end{align}
		 Now we return to the notation $p_k$ and substitute $\mu=\pvf$, we get the formulas in Lemma \ref{lemma-explicit-solu}
    	\begin{equation*}
	p_k(t,g)=e^{ik(\frac{2\pi}{V_F})g}(p_{t,k}\ast G_{t,k})(g).
	\end{equation*} 
	Here $p_{t,k}$ is a shrinkage of $p_{0,k}$, which is the initial data for the $k$-th Fourier mode multiplied a shift in frequency,
	\begin{equation*}
	p_{t,k}(y):=e^{t}p_{0,k}(e^ty),\quad p_{0,k}(g):=e^{-ik\frac{2\pi}{V_F}g}\int_{0}^{V_F}p_{\text{init}}(v,g)e^{-ikv\frac{2\pi}{V_F}}dv.
	\end{equation*} 
	And $G_{t,k}=\frac{1}{\sqrt{2\pi}}\mathcal{F}^{-1}_{\xi}[e^{-H_t(\xi)}]$ is a modified Gaussian with a phase factor and a decay factor,
	\begin{equation*}
	G_{t,k}(z)=\frac{1}{\sqrt{2\pi C(t)}}\exp\left(-\frac{(z-B(t))^2}{2C(t)}\right)\exp\left(ik\frac{2\pi}{V_F}\myBtwo(t,z)\right)\exp\left(-k^2(\frac{2\pi}{V_F})^2D(t)\right).
	\end{equation*}Here the mean $B(t)$ and the variance $C(t)$ are given by
	\begin{equation*}
	\begin{aligned}
		B(t)=\int_{0}^{t}e^{-(t-s)}g_{\text{in}}(s)ds=\int_{0}^{t}e^{-(t-s)}(g_0+g_1N(s))ds.\\
	C(t)=2\int_{0}^{t}e^{-2(t-s)}a(s)ds=2\int_{0}^{t}e^{-2(t-s)}(a_0+a_1N(s))ds.
	\end{aligned}
	\end{equation*}
	Moreover $\myBtwo(t,z)$ and $D(t)$ are given by
	\begin{equation*}
	\myBtwo(t,z)=-(z-B(t))\frac{\int_{0}^{t}e^{s-t}a(s)ds}{\int_{0}^{t}e^{2(s-t)}a(s)ds}-\int_{0}^{t}g_{\text{in}}(s)ds,
	\end{equation*} and
	\begin{equation*}
	D(t)=\int_{0}^{t}a(s)ds-\frac{(\int_{0}^{t}e^{s-t}a(s)ds)^2}{\int_{0}^{t}e^{2(s-t)}a(s)ds}
	\end{equation*}

	\section{On the asymptotically autonomous system}\label{sc:asyauto}
	For reader's convenience here we recall the definition of the asymptotically autonomous system and a long time behavior result, taken from the introduction of \cite{thieme1994asymptotically}.
	
	We called an ODE
	\beq\label{auto_1-1}
	\dot{x}=f(t,x),\quad x(t)\in\mathbb{R}^n,
	\eeq asymptotically autonomous if there exists a limit equation
	\beq\label{auto_1-2}
	\dot{y}=g(y),\quad y(t)\in\mathbb{R}^n,
	\eeq such that 
	\beq\label{auto3}
	f(t,x)\rightarrow g(x), \quad t\rightarrow\infty,\quad \text{{locally} uniformly for $x\in\mathbb{R}^n$}.
	\eeq
	In the following we assume $f(t,x)$ and $g(x)$ are continuous function and locally Lipschitz in $x$.
	
	We define the following $\omega$-limit set $\omega(t_0,x_0)$ for a forward bounded solution $x$ of \eqref{auto_1-1} starting at $x_0$ when $t=t_0$,
	\beq
	\omega(t_0,x_0)=\cap_{s>t_0}\overline{\{x(t),t\geq s\}}.
	\eeq
	
	We shall use the following result on the asymptotically autonomous system, which is a direct combination of Theorem 1, 2 and 7 of Markus's \cite{markus1956ii} (which are quoted as Theorem 1.1, 1.2 and 1.3 in 
	\cite{thieme1994asymptotically})
	\begin{theorem_sec}
		\label{thm:auto}
	Suppose the dimension $n=2$ and the system \eqref{auto_1-2} has exactly one equilibrium $y_0$ which is locally asymptotically stable and there is no periodic orbits. Suppose $x$ is forward bounded solution of \eqref{auto_1-1} then 
	\beq
	x(t)\rightarrow y_0,\quad\text{as}\quad t\rightarrow\infty.
	\eeq
	\end{theorem_sec}

	\begin{proof}
		By \cite[Theorem 1.3]{thieme1994asymptotically} in 2D the $\omega$ limit set either contains equilibra or is the union of the periodic orbits of \eqref{auto_1-2}. In our case by the assumption on \eqref{auto_1-2} we have
		\beq
		\omega(t_0,x_0)=\{y_0\}.
		\eeq
		Since $y_0$ is a locally asymptotically stable steady state of \eqref{auto_1-2}, by \cite[Theorem 1.2]{thieme1994asymptotically} we conclude that $x(t)\rightarrow y_0,$ as $t\rightarrow\infty$.
	\end{proof}
	
	To apply Theorem \ref{thm:auto}, besides the study of the limit system \eqref{auto_1-2}, we shall take care to prove the locally uniformly convergence \eqref{auto3} and to prove that the solution is bounded.

\end{appendices}

\bibliography{bibtoy1}
\bibliographystyle{abbrv}
\end{document}